\newcommand*\circled[1]{\tikz[baseline=(char.base)]{
            \node[shape=circle,draw,inner sep=2pt] (char) {#1};}}
\newtheorem{theorem}{Theorem}[section]
\newtheorem{lemma}[theorem]{Lemma}
\newtheorem{proposition}[theorem]{Proposition}
\theoremstyle{definition}
\newtheorem*{notation}{Notation}
\theoremstyle{remark}
\numberwithin{equation}{section}
\newcommand{\eps}{\varepsilon}
\newcommand{\calF}{\mathcal{F}}
\newcommand{\calG}{\mathcal{G}}
\newcommand{\calD}{\mathcal{D}}
\newcommand{\calT}{\mathcal{T}}
\newcommand{\calB}{\mathcal{B}}
\newcommand{\calH}{\mathcal{H}}
\newcommand{\bone}{\mathds{1}}
\newcommand{\E}{\operatorname{\mathds{E}}} 
\renewcommand{\P}{\operatorname{\mathds{P}}} 
\newcommand{\bS}{\mathbf{S}}
\newcommand{\tnw}{{\calT_{{\scaleobj{0.7}{\rotatebox[origin=c]{-45}{$\updownarrow$}}}}}}
\newcommand{\tnwc}{{\calT^c_{{\scaleobj{0.7}{\rotatebox[origin=c]{-45}{$\updownarrow$}}}}}}
\title[Contradictory predictions]{Contradictory predictions}
\author{Krzysztof Burdzy and Soumik Pal}
\address{ Department of Mathematics, Box 354350, University of Washington, Seattle, WA 98195}
\email{burdzy@uw.edu}
\email{soumik@uw.edu}
\thanks{KB: Research supported in part by Simons Foundation Grant 506732. SP: Research supported in part by NSF Grant DMS-1612483.}
\keywords{Conditional probability, opinion, joint distribution of conditional expectations}
\subjclass[2010]{60E15}
\begin{document}

\begin{abstract}
We prove the sharp bound for the probability that two experts who have access to different information, represented by different $\sigma$-fields, will give radically different estimates of the probability of an event. This is relevant when one combines predictions from various experts in a common probability space to obtain an aggregated forecast. Our proof is constructive in the sense that, not only the sharp bounds are proved, but also the optimizer is constructed via an explicit algorithm. 
\end{abstract}

\maketitle

\section{Introduction}\label{intro}


Imagine two experts, with access to different information, but sharing the same worldview. We model this by a probability space $(\Omega, {\calF}, \P)$ and with two distinct sub-$\sigma$-fields $\calG$ and $\calH$ of $\calF$. The sub-$\sigma$-fields represent the information accessible to the two experts while the common probability space represents their worldview, in the sense that, if one of the experts knew exactly what the other knows, he/she would arrive at exactly the same conclusions. This set-up is sometimes called the problem of combining experts' opinions under partial information, or more colloquially ``wisdom of the crowds'', and is a popular topic in statistics and decision theory. In general there are $N$ experts with their individual sub-$\sigma$-fields who are all trying to predict the probability of a common event $A$ (such as a particular candidate will win the election). The usual question is if there is a \textit{coherent} way to combine their predictions to come up with a better forecast. Introduced this way in the mid 80's onwards, see \cite{genest1984, DeGroot88, Dawid1995}, such combinations typically take the form of weighted averages (\cite{degroot91}). The field has found a renewed interest in the current age of social networks (see \cite{Lorenz9020, French2011}). In particular, \cite{gneiting10, gneiting2013} recommend both linear and non-linear combinations, \cite{Pemantle16} develops a mathematical framework to combine predictions when experts use ``partially overlapping information sources'', and \cite{Dawid18} uses it for the case of $N=2$ experts in prediction markets who take turn in updating their beliefs. Also see \cite{MB15, casarin16, KAPETANIOS2015150} for applications to economics, \cite{KRUGER2016S172} for applications to banking and finance, \cite{MG16} for applications to meteorology, \cite{TAYLOR2018877} for applications to maintenance of wind turbines, and \cite{KB_S, Easwaran2016} for philosophical implications. The problem is also related to modeling insider trading in finance \cite{KP15} where the insider has more information that the rest of the traders, i.e., $\calG \subseteq \calH$, although the general non-containment scenario makes sense for two different insiders.

We ask a complementary but different question, not about the weighted average of the various predictions, but their spread. More specifically, we consider $N=2$ experts predicting the possibility of a common event $A$ and derive sharp probabilistic bounds on the range of their predictions. In particular, we are interested when this spread is large, a phenomenon that we call ``contradictory predictions''. Two people make contradictory predictions if one of them asserts that the probability of $A$ is very small and the other one says that it is very large. We will present a theorem formalizing the idea that the two experts are unlikely to make contradictory
predictions even if they have different information sources.

Let us formulate the problem in the quantitative and rigorous manner.

Let $A$ be an event in some probability space $(\Omega
,{\calF}, \P)$, so $A \in\calF$, and let $X = \P(A\mid {\calG})$
and $Y = \P(A\mid {\calH})$ for two sub-$\sigma$-fields $\calG$ and $\calH$ of $\calF$.  
Let
\begin{align}\label{s28.2}
\lambda(\delta) = \sup \P(|X-Y| \geq 1-\delta) ,
\end{align}
where the supremum is taken over all probability spaces  $(\Omega,{\calF}, \P)$,
all events $A\in \calF$ and all sub-$\sigma$-fields $\calG$ and $\calH$ of $\calF$.  
It was proved in Theorem 14.1 of \cite{KB_S} that $\lambda(\delta)\leq 5 \delta$ for $\delta \leq 1/10$. 
The following stronger result was proved by Jim Pitman and published in \cite[Thm. 18.1]{KB_R}, with his permission.
For all $\delta \in (0,1)$,
\begin{align}\label{f11.8}
\frac{2 \delta} {1+\delta} \leq \lambda(\delta)\leq 2 \delta.
\end{align}
The purpose of this article is to remove the gap between the lower and upper bounds, even though the gap is very small for small $\delta$, i.e., in the most interesting case.

\begin{theorem}\label{f9.1}
For all $\delta \in (0,1/2)$,
\begin{align}\label{s7.2}
\lambda(\delta) = \frac{2 \delta} {1+\delta} .
\end{align}
\end{theorem}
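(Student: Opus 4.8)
The lower bound $\lambda(\delta)\ge \tfrac{2\delta}{1+\delta}$ is already contained in \eqref{f11.8}, so the whole task is to improve the upper bound from the known $2\delta$ to $\tfrac{2\delta}{1+\delta}$. The plan is to produce a \emph{dual certificate}: a single bounded function $\alpha:[0,1]\to\R$ together with the constant $c=\tfrac{2\delta}{1+\delta}$ such that, writing $a=\bone_A(\omega)$, $x=X(\omega)$, $y=Y(\omega)$, the pointwise inequality
\begin{align}\label{plan-cert}
\bone[\,|x-y|\ge 1-\delta\,]\le c+\alpha(x)(a-x)+\alpha(y)(a-y)
\end{align}
holds for every $x,y\in[0,1]$ and every $a\in\{0,1\}$. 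This suffices by the defining property of conditional expectation: since $\alpha(X)$ is $\calG$-measurable and $X=\P(A\mid\calG)$, one has $\E[\alpha(X)(\bone_A-X)]=0$, and likewise $\E[\alpha(Y)(\bone_A-Y)]=0$. Taking expectations in \eqref{plan-cert} evaluated at $(X,Y,\bone_A)$ then yields $\P(|X-Y|\ge 1-\delta)\le c$ for \emph{every} admissible $(\Omega,\calF,\P,A,\calG,\calH)$, which is exactly $\lambda(\delta)\le \tfrac{2\delta}{1+\delta}$.

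Constructing $\alpha$ is where the work lies. First I would exploit the $A\leftrightarrow A^c$ symmetry (which sends $X\mapsto 1-X$, $Y\mapsto 1-Y$ and fixes $\{|X-Y|\ge1-\delta\}$) by imposing $\alpha(1-x)=-\alpha(x)$. Setting $F(x):=\alpha(x)(1-x)$, a one-line computation gives $-\alpha(x)x=F(1-x)$, so the case $a=0$ of \eqref{plan-cert} is just the case $a=1$ read at $(1-x,1-y)$; hence only the single family
\begin{align}\label{plan-F}
F(x)+F(y)\ge \bone[\,|x-y|\ge 1-\delta\,]-c,\qquad x,y\in[0,1],
\end{align}
remains. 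Since $|x-y|\ge 1-\delta$ forces one argument into $[1-\delta,1]$ and the other into $[0,\delta]$, the indicator equals $1$ only on ``corner'' pairs, and \eqref{plan-F} splits into finitely many regimes determined by the thresholds $\delta$ and $1-\delta$.

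I would then guess the certificate from its extreme points: take $\alpha\equiv \tfrac{1}{1+\delta}$ on $[0,\delta]$ and $\alpha\equiv-\tfrac{1}{1+\delta}$ on $[1-\delta,1]$, so that $F(x)=\tfrac{1-x}{1+\delta}$ on the small block and $F(x)=-\tfrac{1-x}{1+\delta}$ on the big block. On a corner pair $(1-a,b)$ with $a+b\le\delta$ one finds $F(1-a)+F(b)=\tfrac{1-(a+b)}{1+\delta}\ge\tfrac{1-\delta}{1+\delta}=1-c$, so every indicator-$1$ constraint holds, with equality on $a+b=\delta$; and the worst indicator-$0$ constraint, the big--big pair at $(1-\delta,1-\delta)$, reads $2F(1-\delta)=-\tfrac{2\delta}{1+\delta}=-c$, again tight. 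These two tight constraints — the extreme pair $(1-\delta,0)$ and the pair $(1-\delta,1-\delta)$ — are what pin the value: combining $F(\delta)\ge 1-c$ with $F(\delta)\le \tfrac{(1-\delta)c}{2\delta}$ forces $c\ge\tfrac{2\delta}{1+\delta}$, so no certificate of this type can do better, and complementary slackness reads off the extremal configuration, recovering the explicit optimizer promised in the abstract and matching the lower bound.

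The main obstacle I anticipate is defining $F$ on the middle block $(\delta,1-\delta)$ so that \emph{all} remaining inequalities in \eqref{plan-F} hold at once: a naive (e.g.\ linear) interpolation makes $F$ dip below $-\tfrac{\delta}{1+\delta}$ once reflected through $x\mapsto 1-x$, violating an indicator-$0$ constraint. The fix is to choose the profile on $(\delta,\tfrac12)$ so as to saturate the reflected bound, which forces $F\equiv-\tfrac{\delta}{1+\delta}=-c/2$ on $(\tfrac12,1-\delta)$ while $F$ stays nonnegative on $(\delta,\tfrac12)$ and passes through $F(\tfrac12)=0$; one then verifies case by case (small--small, small--big, big--big, and every pairing involving the middle) that $F(x)+F(y)\ge-c$, the only indicator-$1$ family being small--big, already checked. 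The resulting $\alpha$ is bounded, so the mean-zero identities apply, and the expectation argument above closes the gap and establishes \eqref{s7.2}.
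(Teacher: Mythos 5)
Your certificate is correct, and your route is genuinely different from the paper's. I checked the construction: with $c=\tfrac{2\delta}{1+\delta}$, $F(x)=\tfrac{1-x}{1+\delta}$ on $[0,\delta]$, $F(x)=-\tfrac{1-x}{1+\delta}$ on $[1-\delta,1]$, and the antisymmetry $F(1-x)=-\tfrac{x}{1-x}F(x)$ forcing $F(x)=\tfrac{1-x}{x}\cdot\tfrac{\delta}{1+\delta}>0$ on $(\delta,\tfrac12)$, $F(\tfrac12)=0$, and $F\equiv-\tfrac{\delta}{1+\delta}$ on $(\tfrac12,1-\delta)$, the global minimum of $F$ is exactly $-c/2$, so every indicator-$0$ pair satisfies $F(x)+F(y)\ge-c$, while every indicator-$1$ pair $(1-a,b)$ with $a+b\le\delta$ gives $F(1-a)+F(b)=\tfrac{1-a-b}{1+\delta}\ge 1-c$; the function $\alpha(x)=F(x)/(1-x)$ is bounded by $\tfrac{1}{1+\delta}$, so $\E[\alpha(X)(\bone_A-X)]=\E[\alpha(Y)(\bone_A-Y)]=0$ and the expectation of the pointwise inequality yields $\P(|X-Y|\ge1-\delta)\le c$ with no further hypotheses. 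This is a duality (complementary-slackness) argument: one linear functional certifies the bound for \emph{all} spaces at once, and it is tight precisely on the three-atom extremal example of Proposition \ref{s28.4} (both active constraints, at $(\delta,1,1)$ and $(1-\delta,1-\delta)$ after reflection, hold with equality). The paper instead first reduces to finitely generated $\sigma$-fields (Lemma \ref{s28.5}) and then runs a long algorithmic sequence of transformations of $(\Omega,\calF,\P,A,\calG,\calH)$ — merging atoms, redistributing mass among cells, and a branching case analysis — to reach a canonical configuration in which $B$ is a disjoint union of cells $G_k\cap H_{j(k)}$ with distinct rows and columns, to which the elementary Lemma \ref{f11.7} is applied termwise. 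Your approach buys brevity, avoids discretization entirely, and makes the origin of the constant $\tfrac{2\delta}{1+\delta}$ transparent as the optimal value of a two-constraint linear program; the paper's approach buys an explicit constructive reduction to the optimizer, which is the feature emphasized in its abstract. The only parts of your write-up left as ``routine'' are the middle-block verifications, and these do close: the binding requirement there is simply $F\ge-c/2$ pointwise, which your profile satisfies.
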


\medskip


It has been shown in \cite{BurPit} that $\lambda(\delta)$ is, curiously, discontinuous at $\delta=1/2$. More precisely, $\lambda(\delta) =1$ for $\delta\geq 1/2$. To see this, construct $X$ and $Y$ so that $\P(X=1/2) = 1$ and $\P(Y=0) = \P(Y=1)=1/2$ (see the discussion around (1.10) in \cite{BurPit}).

As mentioned above, one can  interpret $X$ and $Y$ as the opinions of two experts about the probability of $A$ given different sources of information $\calG$ and $\calH$,
assuming the experts agree on some initial assignment of probability $\P$ to events in $\calF$.
The authors of \cite[p. 284]{Dawid1995} pointed out that
\begin{quote}
If $X$ and $Y$ are both produced by ``experts'', then one should not expect them to be wildly different. For example, it would seem paradoxical if, with $X$ say uniform on $[0,1]$,
one always had $Y = 1 - X$. 
\end{quote}

This suggests that $X$ and $Y$ cannot be too negatively dependent. However, 
elementary examples in \cite[\S 4.1]{Dawid1995}
show that for any prescribed value of $EX = EY = P(A) \in (0,1)$, the correlation 
between  $X$ and $Y$  can take
any value in $(-1,1]$. 
Consider for instance, for $\delta \in (0,1)$, the distribution of $(X,Y)$
concentrated on the three points $(1-\delta, 1 - \delta)$, $(0, 1-\delta)$ and $(1-\delta,0)$, with 
\begin{equation*}
P(X=Y) = P(1-\delta,1-\delta) = \frac{ 1 - \delta}{ 1 + \delta } \qquad \mbox{and} \qquad P(0, 1-\delta)= P(1-\delta,0) = \frac{ \delta}{ 1 + \delta }  .
\end{equation*}
This example from \cite{MR553386} 
gives   $X$ and $Y$ with
correlation $\rho(X,Y) = - \delta$ which can be any value in $(-1,0)$.

\medskip

We end this section with  a proof of \eqref{f11.8} borrowed from \cite[Thm. 18.1]{KB_R} because it is simple and it underscores the huge gap between the complexity of the proof of the upper bound in \eqref{f11.8} and our proof of the upper bound in  \eqref{s7.2}.

\begin{proposition} \label{s28.4}
(J.~Pitman)
For all $\delta \in (0,1)$,
\begin{align}\label{s28.3}
\frac{2 \delta} {1+\delta} \leq \lambda(\delta)\leq 2 \delta.
\end{align}
\end{proposition}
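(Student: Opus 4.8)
The plan is to prove the two inequalities in \eqref{s28.3} separately: the lower bound by exhibiting an explicit optimizer, and the upper bound by a short conditioning argument.

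For the lower bound I would realize the three-point distribution of $(X,Y)$ displayed just above the statement. Take $\Omega=\{\omega_1,\omega_2,\omega_3\}$ with $\P(\omega_1)=\frac{1-\delta}{1+\delta}$ and $\P(\omega_2)=\P(\omega_3)=\frac{\delta}{1+\delta}$, set $A=\{\omega_1\}$, and let $\calG$ and $\calH$ be generated by the partitions $\{\{\omega_1,\omega_3\},\{\omega_2\}\}$ and $\{\{\omega_1,\omega_2\},\{\omega_3\}\}$, respectively. A direct computation of $\P(A\mid\calG)$ and $\P(A\mid\calH)$ then gives $X(\omega_1)=X(\omega_3)=1-\delta$, $X(\omega_2)=0$ and $Y(\omega_1)=Y(\omega_2)=1-\delta$, $Y(\omega_3)=0$, which matches the prescribed law and in particular confirms that this law is realizable as a pair of conditional expectations of a common indicator. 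Since $|X-Y|=1-\delta$ exactly on $\{\omega_2,\omega_3\}$ and $|X-Y|=0$ on $\{\omega_1\}$, we get $\P(|X-Y|\ge 1-\delta)=\P(\omega_2)+\P(\omega_3)=\frac{2\delta}{1+\delta}$, hence $\lambda(\delta)\ge\frac{2\delta}{1+\delta}$.

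For the upper bound I would first note that for $\delta\ge 1/2$ the inequality is trivial, since $2\delta\ge 1$, so I assume $\delta<1/2$. Split the event $\{|X-Y|\ge 1-\delta\}$ into the disjoint parts $C_+=\{X-Y\ge 1-\delta\}$ and $C_-=\{Y-X\ge 1-\delta\}$; because $X,Y\in[0,1]$, on $C_+$ one has $X\ge 1-\delta$ and $Y\le\delta$, while on $C_-$ one has $Y\ge 1-\delta$ and $X\le\delta$. The key step is to bound each $\P(C_\pm)$ after intersecting with $A$ and $A^c$, exploiting that $\{X\ge 1-\delta\},\{X\le\delta\}$ are $\calG$-measurable and $\{Y\ge 1-\delta\},\{Y\le\delta\}$ are $\calH$-measurable. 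For example, $\P(A\cap C_+)\le\E[Y\bone_{\{Y\le\delta\}}]\le\delta\,\P(Y\le\delta)$ and $\P(A^c\cap C_+)\le\E[(1-X)\bone_{\{X\ge 1-\delta\}}]\le\delta\,\P(X\ge 1-\delta)$, and symmetrically for $C_-$. Adding the four estimates yields
\[
\P(|X-Y|\ge 1-\delta)\le \delta\big[\P(X\le\delta)+\P(X\ge 1-\delta)+\P(Y\le\delta)+\P(Y\ge 1-\delta)\big].
\]
Since $\delta<1/2$, the events $\{X\le\delta\}$ and $\{X\ge 1-\delta\}$ are disjoint, as are $\{Y\le\delta\}$ and $\{Y\ge 1-\delta\}$, so the bracket is at most $2$, giving $\lambda(\delta)\le 2\delta$.

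The main obstacle is extracting the factor $\delta$ in the upper bound. The naive Markov estimate $\P(X\ge 1-\delta)\le\P(A)/(1-\delta)$ carries the unknown $\P(A)$ and, combined by a union bound, degenerates to a useless estimate near $\P(A)=1/2$. The decomposition according to whether $A$ occurs, applying on each piece the ``wrong'' conditional expectation ($Y$ small on $A\cap C_+$, $1-X$ small on $A^c\cap C_+$), is precisely what turns the $1-\delta$ denominators into $\delta$ numerators and eliminates the dependence on $\P(A)$.
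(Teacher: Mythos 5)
Your proof is correct and follows essentially the same route as the paper: the lower bound uses the same three-point construction (yours is the paper's example with $A$ replaced by its complement, yielding the $(1-\delta,1-\delta)$, $(0,1-\delta)$, $(1-\delta,0)$ law quoted in the introduction), and the upper bound uses the identical decomposition of $\{|X-Y|\ge 1-\delta\}$ into two corner events, each split over $A$ and $A^c$ and estimated via $\E[\bone_F X]\le\delta\,\P(F)$ for $\calG$-measurable $F\subset\{X\le\delta\}$ and its mirror images. Your explicit remark that the case $\delta\ge 1/2$ is trivial is a minor tidy addition; otherwise the two arguments coincide.
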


\begin{proof}
We will use notation matching that in the proof of our main theorem.

The lower bound for $\lambda(\delta)$ is attained by the following example. Fix $0 < \delta < 1$. Let
$\{C_{1,1}, C_{1,2}, C_{2,1}\}$ be a partition of $\Omega$, with 
$\P(C_{1,2})=\P(C_{2,1})=\delta/(1 + \delta)$.
Let $A = C_{1,2}\cup C_{2,1}$.
Let $G_1 = C_{1,1}\cup C_{1,2}$, $G_2 = C_{2,1}$, $H_1 = C_{1,1}\cup C_{2,1}$, and $H_2 = C_{1,2}$. 
Let $\calG$ be
generated by the partition $\{G_1,G_2\}$ and let $\calH$ be generated by the partition $\{H_1,H_2\}$.  It is clear by construction
that
\begin{align*}
X= \P(A\mid {\calG}) = \delta \bone_{G_1} + \bone_{G_2}, \qquad Y= \P(A\mid {\calH}) = \delta \bone_{H_1} + \bone_{H_2}.
\end{align*}
It follows that
\begin{align*}
|X-Y| = (1-\delta) \bone_{C_{1,2}\cup C_{2,1}} = (1-\delta) \bone_A
\end{align*}
and hence
\begin{align*}
\P(|X-Y| \geq 1-\delta) = \P(A) = 2\delta/(1 + \delta).
\end{align*}

\bigskip
 Next we will prove the upper bound. Assume that $0 < \delta < 1/2$. Note that for any $0 < \delta < 1/2$ and any random variables $X$ and $Y$ with $0\leq X \leq 1$ and $0\leq Y \leq 1$,
\begin{align}\label{s7.3}
\{|X-Y| \geq 1-\delta\} \subset \{ X\leq \delta, Y \geq 1-\delta\}
\cup \{ Y\leq \delta, X \geq 1-\delta\}.
\end{align}
Since $X = \E(\bone_A \mid X)$,
\begin{align*}
\P(X\leq \delta, Y \geq 1-\delta, A) 
\leq \P(X\leq \delta,  A) 
 = \E(\bone_{\{X\leq \delta\}}X)
\leq \delta \P(X\leq \delta).
\end{align*}
We have $1 - Y = \E(\bone_{A^c} \mid Y )$ so
\begin{align*}
\P(X\leq \delta, Y \geq 1-\delta, A^c) 
&\leq \P(Y \geq 1-\delta, A^c) 
 = \E(\bone_{\{1-Y\leq \delta\}}(1-Y)) \\
&\leq \delta \P(Y\geq 1-\delta).
\end{align*}
It follows that
\begin{align}\label{s7.4}
\P(X\leq \delta, Y \geq 1-\delta)
\leq \delta ( \P(X\leq \delta)
+ \P(Y\geq 1- \delta))
\end{align}
and similarly
\begin{align}\label{s7.5}
\P(Y\leq \delta, X \geq 1-\delta)
\leq \delta ( \P(Y\leq \delta)
+ \P(X\geq 1- \delta)).
\end{align}
For $0 < \delta < 1/2$ the events $X \leq \delta$ and $X \geq 1- \delta$ are disjoint, so $\P(X\leq \delta) + \P(X\geq 1-\delta) \leq 1$,
and the same holds for $Y$. Add (\ref{s7.4}) and (\ref{s7.5}) and use (\ref{s7.3}) to obtain the upper bound in (\ref{s28.3}).
\end{proof}

\section{Proof of  Theorem \ref{f9.1}}

The proof of Theorem \ref{f9.1}, our main result, will consist of a sequence  of lemmas.

Fix any $\delta \in (0, 1/2)$. Most elements of the model (sets, probabilities) will change within this section but the value of  $\delta$ will remain fixed.

\begin{lemma}\label{f11.7}
Consider any two events $G$ and $H$ such that $\P(G)>0$ and $\P(H)>0$.
If 
\begin{align*}
|\P(A\mid G) - \P(A\mid H)| \geq 1-\delta
\end{align*}
then
\begin{align}\label{j30.2}
\P(G\cap H) \leq \frac \delta{1+\delta} (\P(G) + \P(H)).
\end{align}
\end{lemma}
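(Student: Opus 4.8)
The plan is to reduce the statement to a clean algebraic inequality about four nonnegative quantities by splitting $G$ and $H$ according to whether they lie inside or outside $A$. Write $a = \P(G \cap A)$, $b = \P(G \cap A^c)$, $c = \P(H \cap A)$, $d = \P(H \cap A^c)$, so that $\P(A \mid G) = a/(a+b)$ and $\P(A \mid H) = c/(c+d)$. The hypothesis $|\P(A\mid G) - \P(A\mid H)| \geq 1-\delta$ forces one conditional probability to be large and the other small; without loss of generality I would assume $\P(A\mid G) \geq 1-\delta$ and $\P(A \mid H) \leq \delta$ (the reverse case is symmetric under swapping the roles of $G$ and $H$). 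These two conditions translate, respectively, into $b \leq \frac{\delta}{1-\delta}\,a$ and $c \leq \frac{\delta}{1-\delta}\,d$, i.e. $G$ is mostly inside $A$ and $H$ is mostly outside $A$.

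The key observation is that $G \cap H$ can be split as $(G \cap H \cap A) \cup (G \cap H \cap A^c)$, and each piece is controlled by the ``wrong'' part of one of the two sets. Indeed $\P(G \cap H \cap A) \leq \P(H \cap A) = c$ and $\P(G \cap H \cap A^c) \leq \P(G \cap A^c) = b$, so
\begin{align*}
\P(G \cap H) \leq b + c \leq \frac{\delta}{1-\delta}\,(a + d).
\end{align*}
Since $a \leq a+b = \P(G)$ and $d \leq c+d = \P(H)$, this already gives $\P(G \cap H) \leq \frac{\delta}{1-\delta}(\P(G) + \P(H))$, which is close to but slightly weaker than the target bound $\frac{\delta}{1+\delta}(\P(G)+\P(H))$.

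I expect closing the gap between $\frac{\delta}{1-\delta}$ and the sharp constant $\frac{\delta}{1+\delta}$ to be the main obstacle, and I would handle it by not discarding the ``wrong'' parts $b$ and $d$ when bounding $\P(G)$ and $\P(H)$ from below. The sharpened idea is to use the full relations $\P(G) = a + b \geq a + \frac{?}{}$ more carefully: since $b \leq \frac{\delta}{1-\delta} a$ we also have $a \geq (1-\delta)(a+b) = (1-\delta)\P(G)$, and similarly $d \geq (1-\delta)\P(H)$. Substituting the bound $\P(G\cap H) \leq b + c$ and estimating $b \leq \delta\,\P(G)$ and $c \leq \delta\,\P(H)$ directly (from $b = \P(G) - a \leq \P(G) - (1-\delta)\P(G) = \delta\,\P(G)$, and likewise for $c$) yields $\P(G \cap H) \leq \delta(\P(G) + \P(H))$; the extra factor $\frac{1}{1+\delta}$ should then emerge from exploiting that $G \cap H$ is counted in the lower bounds for both $\P(G)$ and $\P(H)$, so that $\P(G) + \P(H) \geq \P(G \cup H) + \P(G \cap H)$ and feeding $\P(G \cap H)$ back into the estimate. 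The careful bookkeeping in this feedback step — writing the inequality as $\P(G\cap H)(1+\delta) \leq \delta(\P(G)+\P(H))$ after collecting the term $\P(G\cap H)$ on both sides — is where the precise constant $\frac{\delta}{1+\delta}$ is recovered, and that algebraic rearrangement is the delicate part of the argument.
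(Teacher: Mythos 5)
Your first step is fine, but it throws away exactly the part of the hypothesis that produces the sharp constant, and the ``feedback'' step you hope will recover the factor $\tfrac1{1+\delta}$ cannot be carried out from what you have retained. Concretely: after the reduction you use only the two one-sided conditions $\P(A\mid G)\ge 1-\delta$ and $\P(A\mid H)\le\delta$ (equivalently $b\le\delta\,\P(G)$ and $c\le\delta\,\P(H)$), which are strictly weaker than $\P(A\mid G)-\P(A\mid H)\ge 1-\delta$. Under the weaker conditions the conclusion is false, so no algebraic rearrangement of your estimates can reach it. For an explicit witness, take $H\subset G$ with $\P(H)=\P(G\cap H)=u$ and $\P(G\setminus H)=\tfrac{1-2\delta}{\delta}\,u$, let $A$ contain all of $G\setminus H$ and a fraction $\delta$ of $H$. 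Then $\P(A\mid G)=1-\delta$, $\P(A\mid H)=\delta$, $b=(1-\delta)u$, $c=\delta u$, and $\P(G)+\P(H)=u/\delta$, so $\P(G\cap H)=u=\delta\,(\P(G)+\P(H))>\tfrac{\delta}{1+\delta}(\P(G)+\P(H))$. Here your intermediate bound $\P(G\cap H)\le b+c$ is an equality and $b+c=u>\delta\,\P(G\cup H)$, so the inequality $\P(G\cap H)\le\delta\,\P(G\cup H)$ that your feedback step would require simply fails. (This configuration has $|\P(A\mid G)-\P(A\mid H)|=1-2\delta<1-\delta$, so it does not contradict the lemma --- it only shows your method tops out at the constant $\delta$, which is essentially the Pitman bound already recorded in Proposition \ref{s28.4}.)

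The missing idea is to use the size of the \emph{gap} between the two conditional probabilities, not just that each sits on its own side of a threshold. The paper's proof decomposes $G\cup H$ into the three cells $G\cap H^c$, $G\cap H$, $G^c\cap H$ and observes that the cell $G\cap H$ contributes the \emph{same} value $p_2=\P(A\mid G\cap H)$ to both $\P(A\mid G)$ and $\P(A\mid H)$, so the difference must be generated by the symmetric difference; a linearity-in-$p_2$ argument then yields
\begin{align*}
\P\bigl((G\cap H^c)\cup(G^c\cap H)\mid G\cup H\bigr)\;\ge\;\P(A\mid G)-\P(A\mid H)\;\ge\;1-\delta,
\end{align*}
hence $\P(G\cap H\mid G\cup H)\le\delta$, i.e.\ $\P(G\cap H)\le\delta\,\P(G\cup H)=\delta\bigl(\P(G)+\P(H)-\P(G\cap H)\bigr)$, and the constant $\tfrac{\delta}{1+\delta}$ falls out. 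If you want to salvage your set-up, you would need to prove an inequality of this conditional form; the pointwise bounds on $b$ and $c$ alone are not enough.
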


\begin{proof}
Assume without loss of generality that 
\begin{align*}
\P(A\mid G) - \P(A\mid H) \geq 1-\delta.
\end{align*}
Suppose that we can prove that
\begin{align}\label{f18.1}
\P( (G\cap H^c) \cup (H\cap G^c)\mid G\cup H) \geq \P(A\mid G) - \P(A\mid H).
\end{align}
Then
\begin{align*}
\frac{\P( G\cap H)}{\P(  G\cup H)}&=
\P( G\cap H\mid G\cup H) = 1 - \P( (G\cap H^c) \cup (H\cap G^c)\mid G\cup H)\\
&\leq 1- (\P(A\mid G) - \P(A\mid H)) \leq \delta,
\end{align*}
and, therefore,
\begin{align*}
&\P( G\cap H) \leq \delta \P(  G\cup H)
= \delta (\P(G) + \P(H)) - \delta \P( G\cap H),\\
&(1+\delta)\P( G\cap H) \leq \delta (\P(G) + \P(H)).
\end{align*}
This implies \eqref{j30.2} so it will suffice to prove \eqref{f18.1}.

Let 
\begin{alignat*}{3}
&p_1 = \P(A \mid G\cap H^c), \qquad
&&p_2 = \P(A \mid G\cap H),  \qquad
&&p_3 = \P(A \mid G^c\cap H), \\
&q_1 = \P( G\cap H^c),  \qquad
&&q_2 = \P( G\cap H),  \qquad
&&q_3 = \P( G^c\cap H). 
\end{alignat*}
Then
\begin{align}\label{f18.2}
\P(A\mid G) - \P(A\mid H)
= \frac{p_1 q_1 + p_2q_2}{q_1+q_2}
- \frac{p_2 q_2 + p_3q_3}{q_2+q_3}
\leq
\frac{ q_1 + p_2q_2}{q_1+q_2}
- \frac{p_2 q_2 }{q_2+q_3}.
\end{align}
For $p_2=0$ we obtain
\begin{align}\label{f18.3}
\frac{ q_1 + p_2q_2}{q_1+q_2}
- \frac{p_2 q_2 }{q_2+q_3}
=
\frac{ q_1 }{q_1+q_2} \leq \frac{q_1+q_3}{q_1+q_2+q_3}
= \P( (G\cap H^c) \cup (H\cap G^c)\mid G\cup H).
\end{align}
For $p_2=1$,
\begin{align}\label{f18.4}
\frac{ q_1 + p_2q_2}{q_1+q_2}
- \frac{p_2 q_2 }{q_2+q_3}
=
\frac{ q_1+q_2 }{q_1+q_2}- \frac{ q_2 }{q_2+q_3} \leq \frac{q_1+q_3}{q_1+q_2+q_3}
= \P( (G\cap H^c) \cup (H\cap G^c)\mid G\cup H).
\end{align}
The right hand side of \eqref{f18.2} is a linear function of $p_2$ so
\eqref{f18.2}-\eqref{f18.4} imply that for all $p_2\in[0,1]$,
\begin{align*}
\P(A\mid G) - \P(A\mid H)
\leq
\frac{ q_1 + p_2q_2}{q_1+q_2}
- \frac{p_2 q_2 }{q_2+q_3}
\leq \P( (G\cap H^c) \cup (H\cap G^c)\mid G\cup H).
\end{align*}
This completes the proof of \eqref{f18.1}.
\end{proof}

\begin{lemma}\label{s28.5}
Recall $\lambda(\delta)$ defined in \eqref{s28.2} and
let
\begin{align*}
\lambda'(\delta) = \sup \P(|X-Y| \geq 1-\delta) ,
\end{align*}
where the supremum is taken over all probability spaces  $(\Omega,{\calF}, \P)$,
all events $A\in \calF$ and all \emph{finitely} generated sub-$\sigma$-fields $\calG$ and $\calH$ of $\calF$. If $\lambda'(\delta) =  2 \delta/(1+\delta)$ for all $\delta\in(0,1/2)$ then $\lambda(\delta) =  2 \delta/(1+\delta)$ for all $\delta\in(0,1/2)$.
\end{lemma}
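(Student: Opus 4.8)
The plan is to prove the two inequalities $\lambda'(\delta)\le\lambda(\delta)$ and $\lambda(\delta)\le\lambda'(\delta)$ separately. The first is immediate: every finitely generated sub-$\sigma$-field is in particular a sub-$\sigma$-field, so the supremum defining $\lambda'(\delta)$ ranges over a subclass of the configurations allowed in the definition of $\lambda(\delta)$; hence $\lambda'(\delta)\le\lambda(\delta)$, and under the hypothesis this already yields $2\delta/(1+\delta)=\lambda'(\delta)\le\lambda(\delta)$. All the work is therefore in the reverse inequality $\lambda(\delta)\le\lambda'(\delta)=2\delta/(1+\delta)$, which amounts to showing that an arbitrary configuration can be approximated by finitely generated ones without losing, in the limit, any mass from the event $\{|X-Y|\ge1-\delta\}$.

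First I would reduce to $\sigma$-fields generated by a single variable. Since $X=\P(A\mid\calG)$ is $\calG$-measurable, we have $\sigma(X)\subseteq\calG$, and the tower property gives $\P(A\mid\sigma(X))=\E(\P(A\mid\calG)\mid\sigma(X))=\E(X\mid\sigma(X))=X$; likewise $\P(A\mid\sigma(Y))=Y$. Thus replacing $\calG,\calH$ by $\sigma(X),\sigma(Y)$ leaves $X$ and $Y$ unchanged, so it suffices to treat the case $\calG=\sigma(X)$, $\calH=\sigma(Y)$. The payoff of this reduction is that $\sigma(X)$ and $\sigma(Y)$ are countably generated (the Borel $\sigma$-field on $[0,1]$ is generated by dyadic intervals), which lets me exhaust them by finite partitions. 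Concretely, let $\calG_n$ be the finite sub-$\sigma$-field generated by the events $\{X\in[k2^{-n},(k+1)2^{-n})\}$, $0\le k<2^n$, define $\calH_n$ analogously from $Y$, and set $X_n=\P(A\mid\calG_n)$ and $Y_n=\P(A\mid\calH_n)$. Each $\calG_n,\calH_n$ is finitely generated, and $\calG_n\uparrow\sigma(X)$, $\calH_n\uparrow\sigma(Y)$, so L\'evy's martingale convergence theorem gives $X_n\to X$ and $Y_n\to Y$ almost surely, whence $|X_n-Y_n|\to|X-Y|$ a.s.

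Now fix $\eps>0$ small enough that $\delta+\eps<1/2$. If $|X(\omega)-Y(\omega)|\ge1-\delta$ and the convergence holds at $\omega$, then $|X_n(\omega)-Y_n(\omega)|\ge1-\delta-\eps$ for all large $n$, so
\begin{align*}
\{|X-Y|\ge1-\delta\}\subseteq\liminf_{n}\{|X_n-Y_n|\ge1-\delta-\eps\}\quad\text{a.s.}
\end{align*}
Applying Fatou's lemma to the indicators and then the hypothesis (at level $\delta+\eps$) to each finitely generated configuration $(\calG_n,\calH_n)$,
\begin{align*}
\P(|X-Y|\ge1-\delta)\le\liminf_{n}\P(|X_n-Y_n|\ge1-\delta-\eps)\le\lambda'(\delta+\eps)=\frac{2(\delta+\eps)}{1+\delta+\eps}.
\end{align*}
Letting $\eps\downarrow0$ yields $\P(|X-Y|\ge1-\delta)\le2\delta/(1+\delta)$, and taking the supremum over all configurations gives $\lambda(\delta)\le2\delta/(1+\delta)=\lambda'(\delta)$.

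The main obstacle I anticipate is the boundary behaviour of the indicator of $\{|X-Y|\ge1-\delta\}$: the almost sure convergence $|X_n-Y_n|\to|X-Y|$ does not by itself force $\bone_{\{|X_n-Y_n|\ge1-\delta\}}\to\bone_{\{|X-Y|\ge1-\delta\}}$, since mass sitting exactly at the threshold can be lost in the limit. The $\eps$-slack combined with the Fatou inequality for $\liminf$ of events is precisely what circumvents this, at the cost of invoking the hypothesis at level $\delta+\eps$ rather than $\delta$ and then passing to the limit; the only constraint is $\delta+\eps<1/2$, which is available because $\delta<1/2$.
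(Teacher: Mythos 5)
Your proof is correct and takes essentially the same route as the paper: discretize $X$ and $Y$ by their level sets into finitely generated $\sigma$-fields $\calG_n,\calH_n$, apply the hypothesis at a slightly larger threshold, and pass to the limit. The only cosmetic difference is that the paper obtains the deterministic bound $\P(|X_n-X|\leq 1/n)=1$ directly (since $X_n=\E(X\mid\calG_n)$ lies in the same length-$1/n$ interval as $X$ on each atom), which replaces your appeal to L\'evy's martingale convergence theorem and Fatou's lemma.
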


\begin{proof}
It is obvious that $\lambda'(\delta) \leq \lambda(\delta)$.
It remains to prove the opposite inequality.

  For $n>1$, let $\calG_n$ be the $\sigma$-field generated by the events $ \{k/n \leq X <( k+1)/n\}$, $0 \leq k \leq n$ (note that $k=n$ is included). Let $X_n=\P(A\mid \calG_n)$. Since 
\begin{align*}
\P(\{X_n \notin [k/n, (k+1)/n)\} \cap \{k/n \leq X <( k+1)/n\}) = 0,
\end{align*}
we have
\begin{align*}
\P(|X_n - X| \leq 1/n) =1.
\end{align*}
Similarly, let $\calH_n$ be the $\sigma$-field generated by the events $ \{k/n \leq Y <( k+1)/n\}$, $0 \leq k \leq n$, and $Y_n=\P(A\mid \calH_n)$. Then
\begin{align*}
\P(|Y_n - Y| \leq 1/n) =1.
\end{align*}
It follows that 
\begin{align}\label{j20.1}
\P(|X - Y| \geq 1-\delta ) \leq
\P(|X_n - Y_n| \geq 1-\delta-2/n ).
\end{align}
Since $n$ can be arbitrarily large,
\eqref{j20.1} implies that if one can prove that 
$\lambda'(\delta) = 2 \delta/(1+\delta)$
for all $\delta\in(0,1/2)$ then $\lambda(\delta) =  2 \delta/(1+\delta)$ for all $\delta\in(0,1/2)$. 
\end{proof}
 
\begin{notation}
In view of Lemma \ref{s28.5}
 we can and will assume from now on that  $\calG$ and $\calH$ are finitely generated.
Let $\{G_1, G_2, \dots ,G_{m(\calG)}\}$ be the partition of $\Omega$ generating $\calG$ and let the family $\{H_1, H_2, \dots ,H_{m(\calH)}\}$ be defined in the analogous way relative to $\calH$.

We will assume without loss of generality that $\P(G_k)>0$ and $\P(H_k)>0$ for all $k$.

Let 
\begin{align}\label{f3.2}
p_k &= \P(G_k), \qquad 1\leq k \leq m(\calG),\\
q_k &= \P(H_k), \qquad 1\leq k \leq m(\calH),\label{f3.3}\\
C_{j,k} & = G_j \cap H_k, \\
x_k &= X(\omega), \qquad \text{  for  }\omega \in G_k,\  1\leq k \leq m(\calG),\label{f3.5}\\
y_k &= Y(\omega), \qquad \text{  for  }\omega \in H_k,\  1\leq k \leq m(\calH).\label{f3.6}
\end{align}
Events $C_{k,j}$ will be called cells.
Assume without loss of generality that
\begin{align}\label{f3.7}
x_1\leq x_2\leq \dots \leq x_{m(\calG)},\qquad
 y_1\leq y_2\leq \dots\leq y_{m(\calH)}.
\end{align}
The values of $x_k$ grow from the left to the right and values of $y_k$ grow from the bottom to the top in Fig.~\ref{fig10}.

Let 
\begin{align}\label{f7.4}
m_-(\calG) & = \max \{k: \exists j \text{  such that  } y_j- x_k \geq 1-\delta\},\\
m_+(\calG) & = \min \{k:  \exists j \text{  such that  } x_k-y_j \geq 1-\delta\},\label{f7.5}\\
m_-(\calH) & = \max \{k:  \exists j \text{  such that  }x_j- y_k \geq 1- \delta\},\label{f7.6}\\
m_+(\calH) & = \min \{k:  \exists j \text{  such that  } y_k-x_j \geq 1-\delta\}.\label{f7.7}
\end{align}
By convention, $\max(\emptyset) = 0$ and $\min(\emptyset) = \infty$.
Let 
\begin{align}\label{f7.1}
B = \{|X-Y| \geq 1-\delta  \}.
\end{align}
The  definitions \eqref{f7.4}-\eqref{f7.7} are illustrated in Fig.~\ref{fig10} as follows. The ``projection'' of the set $B$ (the cells within two closed thick polygonal lines) on the horizontal axis consists of two disjoint intervals $[1, m_-(\calG)]$ and $[m_+(\calG), m(\calG)]$. The ``projection'' of the set $B$  on the vertical axis consists of two disjoint intervals $[1, m_-(\calH)]$ and $[m_+(\calH), m(\calH)]$.

\begin{figure} \includegraphics[width=1.0\linewidth]{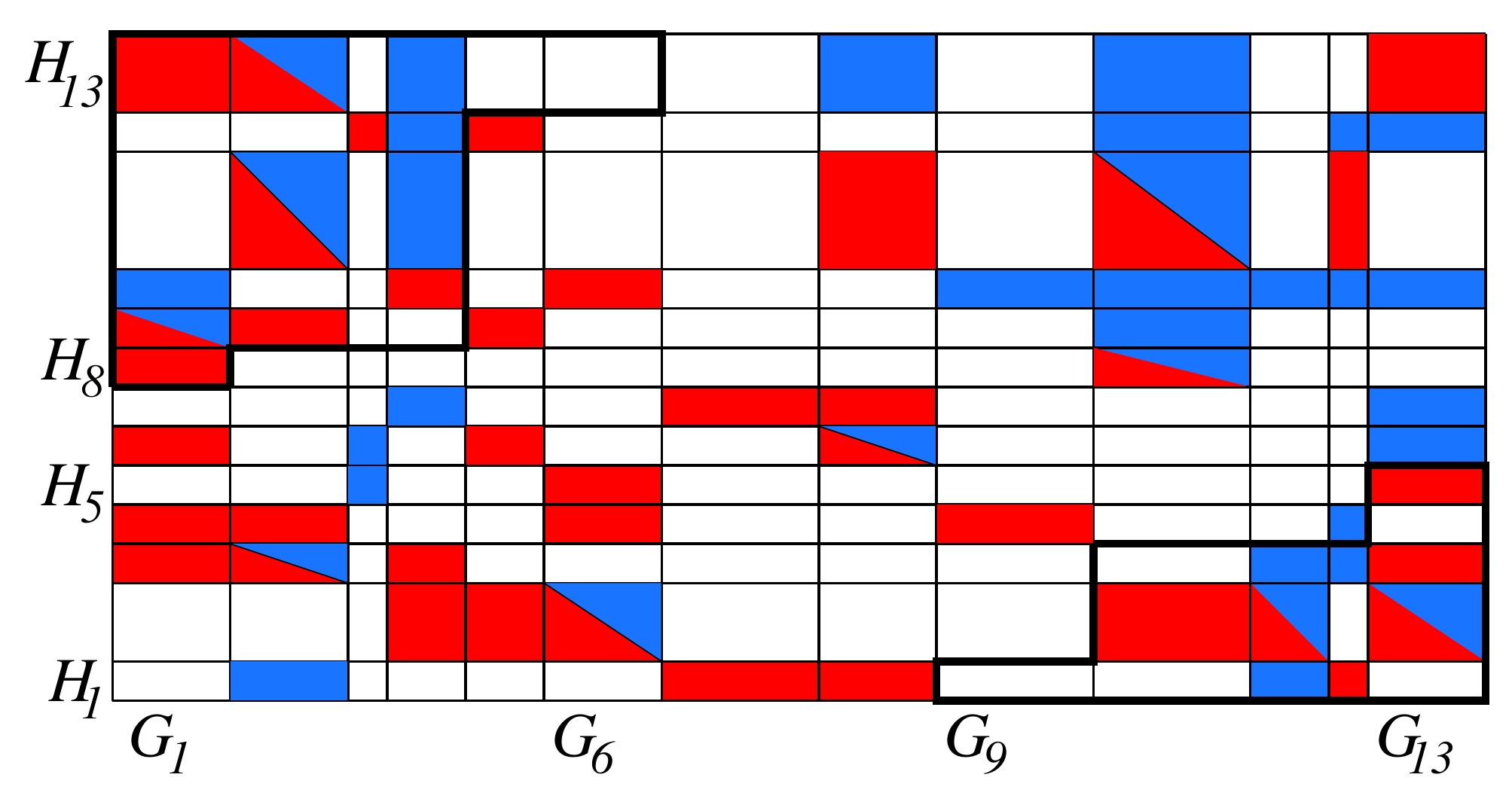}
\caption{(Color coded)
White color represents ``empty'' cells, i.e., cells $C_{k,j}$ such that $\P(C_{k,j})=0$. Red color designates ``cells in $A^c$,'' i.e., cells $C_{k,j}$ such that $\P(C_{k,j} )>\P(C_{k,j} \cap A) =0$. Blue color designates ``cells in $A$,'' i.e., cells $C_{k,j}$ such that $\P(C_{k,j} )>\P(C_{k,j} \cap A^c) =0$. Rectangles colored red and blue represent cells $C_{k,j}$ that contain both  $A$ and $A^c$, i.e., $\P(C_{k,j} \cap A) >0$ and $\P(C_{k,j} \cap A^c) >0$. 
 The event $B$ is 
contained within two thick closed polygonal lines.
In this illustration, $m(\calG) = m(\calH) = 13$, $m_-(\calG) = 6$, $m_+(\calG) = 9$, $m_-(\calH) = 5$, $m_+(\calH) = 8$. Cells have different areas to illustrate the point that their probabilities may be unequal but  no conditions on probability values should be inferred from cell areas. 
}
\label{fig10}
\end{figure}

We will define a number of transformations of the probability space $(\Omega, \calF, \P)$, event $A$, and $\sigma$-fields $\calG$ and $\calH$.
We will denote the transformed objects $(\Omega', \calF', \P)$, $A'$, $\calG'$ and $\calH'$. We will also write in a similar manner $X'$, $p_k'$, $x_k'$, etc. Note that the only exception to this rule is $\P$. We will write $\P$ instead of $\P'$ (except in Lemma \ref{o1.2}) even though the transformed probability measure is not necessarily equal to the original one. This should not cause any confusion.
We will denote our transformations $\calT_1$, $\calT_2$, etc., and we will write
\begin{align*}
\bS &= (\Omega, \calF, \P,A,\calG,\calH),\qquad
\bS' = (\Omega', \calF', \P,A',\calG',\calH'),\qquad
\calT(\bS) = \bS'.
\end{align*}
All objects that are not listed in the definition of $\bS$, for example $X$, $Y$ and those in \eqref{f3.2}-\eqref{f3.6}, are uniquely defined given $\bS$.

Our definitions of  transformations will contain some assumptions about $\bS$. If the assumptions are not satisfied, the transformation should be interpreted as the identity, i.e., $\calT(\bS) = \bS$. 

Note that
$\P(B) =
\sum_{1\leq j \leq m(\calG), 1\leq k \leq m(\calH)}
\P(C_{j,k}\cap B) $.
Most of our transformations will satisfy the following three conditions.
\begin{align}\label{f3.1}
\P(B')=
\sum_{1\leq j \leq m'(\calG'), 1\leq k \leq m'(\calH')}
\P(C'_{j,k}\cap B') &\geq
\sum_{1\leq j \leq m(\calG), 1\leq k \leq m(\calH)}
\P(C_{j,k}\cap B)  =\P(B).\\
m'(\calG')\leq m(\calG), \qquad &m'(\calH')\leq m(\calH).
\label{s28.6}
\end{align}
If
\begin{align}\label{o1.1}
\sum_{1\leq j \leq m_-(\calG), 1\leq k \leq m(\calH)} \P(C_{j,k}\cap B)>0
\ \text{and} \ 
\sum_{m_+(\calG) \leq j \leq m(\calG), 1\leq k \leq m(\calH)} \P(C_{j,k}\cap B)>0,
\end{align}
then
\begin{align} \label{o2.1} 
&\sum_{1\leq j \leq m'_-(\calG'), 1\leq k \leq m'(\calH')} \P(C'_{j,k}\cap B')>0
\ \text{and} \ 
\sum_{m'_+(\calG') \leq j \leq m'(\calG'), 1\leq k \leq m'(\calH')} \P(C'_{j,k}\cap B')>0. 
\end{align}

The meaning of condition \eqref{o1.1}-\eqref{o2.1} is that if the part of $B$ in the left upper corner in Fig.\ref{fig10} is non-empty and has a strictly positive $\P$-measure and the same is true for the part of $B$ in the lower right corner, then the same holds for  $\bS'$.

\end{notation}

\begin{lemma}\label{o1.2}
Suppose that $\bS$ is given and $\P(B) >0$. Then for every $\eps>0$ there exists $\bS'$ such that $\P(B')> \P(B) - \eps$, $m'(\calG') \leq m(\calG)+1$,  $m'(\calH') \leq m(\calH)+1$,  and \eqref{o2.1} holds.
\end{lemma}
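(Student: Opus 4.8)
The plan is to read the two sums in \eqref{o1.1} as the masses carried by $B$ in the two ``corners'' of Fig.~\ref{fig10}: writing $B_{\mathrm{ul}}=B\cap\{Y-X\ge 1-\delta\}$ (low $x$, high $y$) and $B_{\mathrm{lr}}=B\cap\{X-Y\ge 1-\delta\}$ (high $x$, low $y$), the first sum is $>0$ iff $\P(B_{\mathrm{ul}})>0$ and the second iff $\P(B_{\mathrm{lr}})>0$, so that the desired conclusion \eqref{o2.1} is exactly $\P(B'_{\mathrm{ul}})>0$ and $\P(B'_{\mathrm{lr}})>0$. If both corners of $B$ already carry positive mass I would take $\bS'=\bS$. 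Since $\P(B)>0$ at least one corner is non-null, and after interchanging the roles of $(\calG,X)$ and $(\calH,Y)$ — an operation that swaps the two corners while leaving $\P(B)$ and the cell counts unchanged — I may assume $\P(B_{\mathrm{ul}})>0$ and $\P(B_{\mathrm{lr}})=0$. The task then reduces to implanting into $\bS$ a tiny cell sitting in the lower-right corner, paying at most $\eps$ in $\P(B)$ and at most one extra cell in each of $\calG$ and $\calH$.

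To build $\bS'$ I would adjoin to $\Omega$ one new column $G_{\mathrm{new}}$ and one new row $H_{\mathrm{new}}$, together with two new atoms. Fix an arbitrary existing row $H_{k_0}$. Put an atom $\omega_1$ of mass $s$ into the corner cell $G_{\mathrm{new}}\cap H_{\mathrm{new}}$ and declare $\omega_1\in (A')^c$; put an atom $\omega_2$ of mass $t$ into $G_{\mathrm{new}}\cap H_{k_0}$ and declare $\omega_2\in A'$. Leave $G_{\mathrm{new}}$ empty in every other row and set $H_{\mathrm{new}}=\{\omega_1\}$, then renormalize the total mass to $1$. Since $H_{\mathrm{new}}\subset (A')^c$ one gets $y_{\mathrm{new}}=\P(A'\mid H_{\mathrm{new}})=0$, while $x_{\mathrm{new}}=\P(A'\mid G_{\mathrm{new}})=t/(s+t)$, so choosing $s$ small enough that $t/(s+t)\ge 1-\delta$ forces $x_{\mathrm{new}}-y_{\mathrm{new}}\ge 1-\delta$; hence the corner cell $\{\omega_1\}$ lies in $B'_{\mathrm{lr}}$ and carries positive mass. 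Taking $s+t<\eps$ keeps the total added mass, and hence the renormalization, controlled.

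The verification then splits into three essentially routine checks. First, the cell budget: the only genuinely new partition pieces are $G_{\mathrm{new}}$ and $H_{\mathrm{new}}$, so $m'(\calG')=m(\calG)+1$ and $m'(\calH')=m(\calH)+1$, as required. Second, positivity of both corners of $B'$: the cell $\{\omega_1\}$ gives $\P(B'_{\mathrm{lr}})>0$, and the surviving image of the old $B_{\mathrm{ul}}$ gives $\P(B'_{\mathrm{ul}})>0$, which is \eqref{o2.1}. Third, the mass estimate: one shows $\P(B')\ge (\P(B)+s)/(1+s+t)\ge \P(B)-(s+t)>\P(B)-\eps$.

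The one point needing care — and the reason the construction cannot simply glue on a self-contained copy of the Pitman example of Proposition~\ref{s28.4}, which would cost two extra cells in each direction — is that the $+1/+1$ budget leaves no room for a private ``balancing'' cell: to make $\P(A'\mid G_{\mathrm{new}})$ large while the corner atom $\omega_1$ lies in $(A')^c$, the compensating $A'$-mass $\omega_2$ must be dropped into a \emph{pre-existing} row $H_{k_0}$, perturbing its value from $y_{k_0}$ to some $y_{k_0}'>y_{k_0}$. The observation that renders this harmless is monotone: raising $y_{k_0}$ can only \emph{enlarge} $B_{\mathrm{ul}}$ (any cell $C_{j,k_0}$ with $y_{k_0}-x_j\ge 1-\delta$ still satisfies $y_{k_0}'-x_j\ge 1-\delta$), and it cannot cost us any lower-right mass in row $k_0$ since none of the counted mass relied on the old $B_{\mathrm{lr}}$. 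Thus no previously counted $B$-mass is destroyed, and the only loss is the renormalization factor, absorbed by $s+t<\eps$. I expect this monotonicity argument to be the main obstacle, the remainder being bookkeeping.
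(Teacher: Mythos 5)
Your construction is correct and is essentially the paper's own argument: both augment $\bS$ by one new column and one new row carrying total mass less than $\eps$, engineer one corner cell of positive mass lying in the previously empty part of $B$, and rely on the fact that the compensating mass perturbs an existing $x_k$ or $y_k$ only in the harmless monotone direction before renormalizing. The only cosmetic difference is that the paper makes the new column a subset of $A'$ (so $x_{m(\calG)+1}=1$) and dilutes the new row across existing columns, whereas you make the new row a single $(A')^c$ atom (so $y_{\mathrm{new}}=0$) and place the balancing $A'$ atom in an existing row; the two are mirror images of the same idea.
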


\begin{proof}
Since $\P(B)>0$, we must have 
$ m_-(\calG) \geq 1$, $ m_+(\calH) \leq m(\calH)$ and
\begin{align}\label{o3.1}
\sum_{1\leq j \leq m_-(\calG), 1\leq k \leq m(\calH)} \P(C_{j,k}\cap B)>0,
\end{align}
or
$ m_-(\calH) \geq 1$,
$m_+(\calG) \leq m(\calG)$ and
\begin{align}\label{o3.2}
\sum_{m_+(\calG) \leq j \leq m(\calG), 1\leq k \leq m(\calH)} \P(C_{j,k}\cap B)>0,
\end{align}
By symmetry, it will suffice to discuss only one of these cases. 

Suppose that $ m_-(\calG) \geq 1$, $ m_+(\calH) \leq m(\calH)$ and \eqref{o3.1} holds.
If $ m_-(\calH) \geq 1$,
$m_+(\calG) \leq m(\calG)$ and \eqref{o3.2} is true then we set $\bS'=\bS$ and we are done.
Suppose otherwise. We will assume that $ m_-(\calH) =0$ and
$m_+(\calG) =\infty$. It is easy to see that the argument given below applies also in the case when $ m_-(\calH) \geq 1$,
$m_+(\calG) \leq m(\calG)$ and \eqref{o3.2} does not hold.

Let
\begin{align*}
\Omega' = \Omega \cup \{\omega'_{k,0}: 1\leq k \leq m(\calG) +1\}
\cup \{\omega'_{m(\calG)+1,j}: 1\leq j \leq m(\calH) \},
\end{align*}
where all $\omega'_{k,j}$'s are distinct and they do not belong to $\Omega$.
Let 
\begin{align*}
C'_{k,j} &= C_{k,j}, \qquad  1\leq k \leq m(\calG), 1\leq j \leq m(\calH),\\
C'_{k,0} &= \{\omega'_{k,0} \}, \qquad 1\leq k \leq m(\calG) +1,\\
C'_{m(\calG)+1,j}& = \{\omega'_{m(\calG)+1,j}\}, \qquad 1\leq j \leq m(\calH) ,\\
G'_k &= \bigcup_{ 0\leq j \leq m(\calH)} C'_{k,j}, \qquad 1\leq k \leq m(\calG) +1,\\
H'_j &= \bigcup_{ 1\leq k \leq m(\calG)+1} C'_{k,j}, \qquad 0\leq j \leq m(\calH) ,\\
\calG' &= \sigma\{G'_k,  1\leq k \leq m(\calG) +1\},\\
\calH'&= \sigma\{H'_j, 0\leq j \leq m(\calH)\},\\
A' &= A \cup G'_{m(\calG)+1} .
\end{align*}

Fix any $\eps>0$.
Let $\eps_1>0$ be so small that
\begin{align}\label{o3.3}
\P(B) (1- \eps_1/2 - \eps_1 /4)
> \P(B) -\eps.
\end{align}
It is elementary to see that we can define a probability measure $\P'$ on $\Omega'$ with the following properties.
\begin{align*}
\P'\left(H'_0 \setminus C'_{m(\calG)+1, 0}\right) &= \eps_1/2,\\
\P'\left( C'_{m(\calG)+1, 0}\right)& >0,\\
\P'\left( G'_{m(\calG)+1} \right) & = \eps_1\delta/4,\\
\P'(F) &=  (1- \eps_1/2 - \eps_1 \delta /4) \P(F), \qquad F\subset \Omega.
\end{align*}
Note that $x_{m(\calG)+1} = 1$ and 
\begin{align*}
y_0 = \frac{\P(A'\cap H'_0)}{\P( H'_0)}
\leq \frac{\P\left(C'_{m(\calG)+1, 0}\right)}
{\P\left(H'_0 \setminus C'_{m(\calG)+1, 0}\right)}
\leq \frac{\P\left(G'_{m(\calG)+1}\right)}
{\P\left(H'_0 \setminus C'_{m(\calG)+1, 0}\right)}
= \frac{\eps_1 \delta /4}{\eps_1/2}= \delta/2.
\end{align*}
Hence, $C'_{m(\calG)+1, 0} \subset B'$. 
Let us shift the index for the generators $H'_j$ of $\calH'$ so that the generators are  $H'_1, H'_2, \dots, H'_{m(\calH)+1}$.
We see that $m'(\calG') = m(\calG)+1$,  $m'(\calH') = m(\calH)+1$, $ m'_-(\calH') \geq 1$,
$m'_+(\calG') \leq m'(\calG')$ and \eqref{o2.1} holds.
It remains to note that, in view of \eqref{o3.3},
\begin{align*}
\P'(B') \geq \P'(B) = \P(B) (1- \eps_1/2 - \eps_1 \delta /4)
> \P(B) -\eps.
\end{align*}
\end{proof}

\begin{lemma}\label{s29.5}
Suppose that for some $\bS$, $j_1$ and $k_1$, $\P(C_{j_1,k_1}\cap A) = p_1>0$, and consider any $p_2 \in (0, p_1)$.
There exists $\bS'$ such that $m'(\calG')=m(\calG)$, $m'(\calH')=m(\calH)$,
\begin{align*}
\P(C'_{j,k}\cap A') &= \P(C_{j,k}\cap A), \qquad
\P(C'_{j,k}\cap (A')^c) = \P(C_{j,k}\cap A^c),
\end{align*}
for all $1\leq j \leq m(\calG) $ and $ 1\leq k \leq m(\calH)$, and
there exists an event $F\subset C'_{j_1,k_1}\cap A'$ with $\P(F) = p_2$.
Moreover, conditions \eqref{f3.1}-\eqref{o2.1} are satisfied.
\end{lemma}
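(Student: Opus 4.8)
The plan is to ``continuize'' the space: I would take a product of $\bS$ with the unit interval under Lebesgue measure, which introduces just enough non-atomic structure to carve out a sub-event of prescribed mass inside a single cell, while leaving every conditional expectation, every index, and the event $B$ completely unchanged. Thus none of the structural data that the later lemmas rely on is disturbed.

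Concretely, set $\Omega' = \Omega \times [0,1]$, $\calF' = \calF \otimes \calB([0,1])$, let the new measure be the product of the original $\P$ with Lebesgue measure $\Leb$ on $[0,1]$ (denoted $\P$ per the stated convention), and put
\begin{align*}
A' = A \times [0,1], \qquad G'_j = G_j \times [0,1], \qquad H'_k = H_k \times [0,1],
\end{align*}
with $\calG'$, $\calH'$ generated by $\{G'_j\}$ and $\{H'_k\}$. Every cell then splits as a product, $C'_{j,k} = C_{j,k}\times[0,1]$, so $\P(C'_{j,k}\cap A') = \P(C_{j,k}\cap A)$ and $\P(C'_{j,k}\cap(A')^c)=\P(C_{j,k}\cap A^c)$, which are exactly the two displayed identities of the lemma. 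Since $\P(G'_j)=\P(G_j)>0$ and $\P(H'_k)=\P(H_k)>0$, no cell collapses, so $m'(\calG')=m(\calG)$ and $m'(\calH')=m(\calH)$.

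The crucial point is that the product leaves the two experts' opinions pointwise unchanged: for $\omega\in G_j$ and any $t\in[0,1]$ one has $X'(\omega,t)=\P(A'\cap G'_j)/\P(G'_j)=\P(A\cap G_j)/\P(G_j)=x_j$, and likewise $Y'(\omega,t)=y_k$ for $\omega\in H_k$. Hence $x'_j=x_j$ and $y'_k=y_k$, so the ordering \eqref{f3.7} and the four indices \eqref{f7.4}-\eqref{f7.7} are preserved, and $|X'-Y'|(\omega,t)=|X-Y|(\omega)$ forces $B'=B\times[0,1]$ and $\P(B')=\P(B)$. Consequently \eqref{f3.1} and \eqref{s28.6} hold with equality, and because each intersection $C_{j,k}\cap B$ is simply multiplied by $[0,1]$, every sum in \eqref{o1.1} equals its primed counterpart in \eqref{o2.1}, so that implication transfers verbatim.

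It then remains to produce $F$. Since $\P(C_{j_1,k_1}\cap A)=p_1>0$ and $p_2\in(0,p_1)$, I would take the slab
\begin{align*}
F = (C_{j_1,k_1}\cap A)\times[0,\,p_2/p_1]\subset C'_{j_1,k_1}\cap A',
\end{align*}
for which $\P(F)=p_1\cdot(p_2/p_1)=p_2$, as required. There is no genuine obstacle: the whole argument rests on the single observation that multiplying by a non-atomic factor adds divisibility without perturbing any conditional expectation. The only points demanding care are verifying that $X'$ and $Y'$ --- hence $B'$ and the indices $m_\pm(\calG'),m_\pm(\calH')$ --- are truly unchanged, so that \eqref{o1.1}-\eqref{o2.1} carry over, and confirming that the cell count cannot drop, which is guaranteed by the positivity of $\P(G'_j)$ and $\P(H'_k)$.
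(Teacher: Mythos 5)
Your proof is correct, and it reaches the same goal as the paper by a genuinely different construction. The shared idea is to refine $\calF$ without disturbing any cell-level probability, so that a sub-event of prescribed mass $p_2$ can be carved out of $C_{j_1,k_1}\cap A$. The paper does this minimally and discretely: it rebuilds $\Omega'$ from scratch as a finite set with one atom for each piece $C_{j,k}\cap A$ and $C_{j,k}\cap A^c$, plus a single extra atom that splits the mass $p_1$ of $C_{j_1,k_1}\cap A$ into $p_2$ and $p_1-p_2$, and takes $F$ to be one of the two resulting atoms. Your product with $([0,1],\Leb)$ is a heavier tool but, if anything, easier to verify: since $X'=X\circ\pi$ and $Y'=Y\circ\pi$ for the projection $\pi$, the values $x_j$, $y_k$, the event $B$, and the indices \eqref{f7.4}--\eqref{f7.7} are manifestly unchanged, so \eqref{f3.1}--\eqref{o2.1} hold with equality, exactly as you argue, and the slab $(C_{j_1,k_1}\cap A)\times[0,p_2/p_1]$ has measure $p_2$ (well defined because $p_1>0$ is assumed). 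Your version even buys a small bonus the paper forgoes: after one product every cell becomes non-atomic, so the later steps where the paper says ``we make $\calF$ finer, if necessary (see Lemma \ref{s29.5})'' could all be handled by performing your product once at the outset. I see no gap.
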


\begin{proof}
Let $\Omega'$ be defined as follows, 
\begin{align*}
\Omega'
= \{\omega'_{j,k,1}\}_{1\leq j \leq m(\calG), 1\leq k \leq m(\calH)}
\cup
\{\omega'_{j,k,2}\}_{1\leq j \leq m(\calG), 1\leq k \leq m(\calH)}
 \cup
\{\omega'_{j_1,k_1,3}\},
\end{align*}
where all listed elements are distinct. Let
\begin{align*}
C'_{j,k}\cap A' &= \{\omega'_{j,k,1}\},
\qquad C'_{j,k}\cap (A')^c = \{\omega'_{j,k,2}\},
\end{align*}
for all $1\leq j \leq m(\calG) $ and $ 1\leq k \leq m(\calH)$,
except that $C'_{j_1,k_2}\cap A' =\{\omega'_{j_1,k_1,1}, \omega'_{j_1,k_1,3}\}$. Let $F = \{\omega'_{j_1,k_1,1}\}$.

Define the probability measure on $\Omega'$ by
\begin{align*}
\P(C'_{j,k}\cap A') &= \P(C_{j,k}\cap A), \qquad
\P(C'_{j,k}\cap (A')^c) = \P(C_{j,k}\cap A^c),
\end{align*}
for all $1\leq j \leq m(\calG) $ and $ 1\leq k \leq m(\calH)$,
and
\begin{align*}
\P(F) = \P(\{\omega'_{j_1,k_1,1}\})=p_2,\qquad
\P(\{\omega'_{j_1,k_1,3}\})=p_1-p_2.
\end{align*}
It is elementary to check that $\bS'$ satisfies the lemma.
\end{proof}

\begin{lemma}\label{s29.1}
 Suppose that there exists $k\in\{1,\dots, m(\calG)-1\}$  such that for every $j\in\{1,\dots, m(\calH)\}$
we have 
\begin{align}\label{a11.5}
 C_{k,j}\cup C_{k+1,j} \subset B 
 \quad \text{  or  }\quad
 \P(( C_{k,j}\cup C_{k+1,j}) \cap B )=0.
\end{align}
Then there exists $\bS'$ such that \eqref{f3.1} and \eqref{o1.1}-\eqref{o2.1} are satisfied, and
\begin{align}
m'(\calG')= m(\calG)-1, \qquad &m'(\calH')\leq m(\calH).
\label{o4.1}
\end{align}
\end{lemma}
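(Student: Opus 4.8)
The plan is to read the hypothesis \eqref{a11.5} as permission to \emph{merge} the two consecutive $\calG$-atoms $G_k$ and $G_{k+1}$ into a single atom, keeping $\Omega$, $\calF$, $\P$, $A$ and $\calH$ unchanged and setting $\calG'=\sigma\{G_1,\dots,G_{k-1},\,G_k\cup G_{k+1},\,G_{k+2},\dots,G_{m(\calG)}\}$. This at once gives \eqref{o4.1}, namely $m'(\calG')=m(\calG)-1$ and $m'(\calH')=m(\calH)$. The conditional probability on the merged atom is the weighted average
\begin{align*}
x'=\frac{p_k x_k+p_{k+1}x_{k+1}}{p_k+p_{k+1}}\in[x_k,x_{k+1}],
\end{align*}
so that $x_{k-1}\le x_k\le x'\le x_{k+1}\le x_{k+2}$ and the ordering \eqref{f3.7} is preserved after re-indexing; every cell, every value $y_j$, and all of $B$ outside rows $k$ and $k+1$ are left untouched.

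The one place where $\delta<1/2$ is used is a \emph{same-side} observation: if $|x_k-y_j|\ge 1-\delta$ and $|x_{k+1}-y_j|\ge 1-\delta$, then $x_k$ and $x_{k+1}$ cannot straddle $y_j$, since that would force $x_{k+1}-x_k\ge 2(1-\delta)>1$, impossible for numbers in $[0,1]$. Hence both lie on one side of $y_j$, and so does the intermediate value $x'$; therefore $|x'-y_j|\ge 1-\delta$ as well. Thus whenever both cells of a column genuinely sit in $B$, the merged cell of that column still sits in $B'$, on the same (upper-left or lower-right) side.

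With this I would establish \eqref{f3.1} by column-by-column bookkeeping. Fix $j$ and invoke \eqref{a11.5}. If $\P((C_{k,j}\cup C_{k+1,j})\cap B)=0$ the column contributes nothing to $\P(B)$, so it can only help $\P(B')$. If instead $C_{k,j}\cup C_{k+1,j}\subset B$ with both cells of positive probability, the same-side observation keeps the merged cell in $B'$ and the contribution $\P(C_{k,j})+\P(C_{k+1,j})$ is matched exactly. Summing over $j$ and adding the untouched rows yields $\P(B')\ge\P(B)$; and since a surviving $B$-cell stays on its original side (because $x'\in[x_k,x_{k+1}]$), the upper-left and lower-right corner sums of $B'$ remain positive whenever the corresponding sums for $\bS$ are, which is \eqref{o1.1}--\eqref{o2.1}.

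The step I expect to be the genuine obstacle is precisely the preservation of \eqref{f3.1} and of the two corners through the averaging, in the \emph{degenerate} columns where only one of $C_{k,j}$, $C_{k+1,j}$ carries the $B$-mass while its partner is $\P$-null. There the average $x'$ can be pulled across the threshold $1-\delta$ by the value of the far row, so a plain merge may delete $B$-mass and even empty a corner. Hypothesis \eqref{a11.5} is the device meant to forestall this by forcing the two rows to agree on $B$ column by column; the careful work is to check that this agreement still pins $x'$ to the correct side of $y_j$ in every degenerate column, adjusting the event $A$ on the merged atom to push $x'$ to the required extreme if necessary, while verifying that such an adjustment does not disturb the values $y_j$ supporting the surviving $B$-cells. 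Once the degenerate columns are handled, \eqref{f3.1}, \eqref{o1.1}--\eqref{o2.1} and \eqref{o4.1} all follow.
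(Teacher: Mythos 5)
Your construction is exactly the paper's: merge $G_k$ and $G_{k+1}$ into a single $\calG$-atom, leave $\Omega$, $\P$, $A$ and $\calH$ untouched, note that the new conditional probability $x'_k$ lies in $[x_k,x_{k+1}]$, and use the same-side observation (two points of $[0,1]$ at distance $\geq 1-\delta>1/2$ from $y_j$ cannot straddle $y_j$, so neither can any point between them) to conclude that a column whose two cells both lie in $B$ keeps its merged cell in $B'$. The column-by-column bookkeeping for \eqref{f3.1}, the count \eqref{o4.1}, and the preservation of the two corners \eqref{o1.1}--\eqref{o2.1} are all as in the paper.

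The only thing to tighten is your final paragraph. The degenerate column you fear --- one cell carrying positive $B$-mass while its partner is $\P$-null and sits outside $B$, so that the average is dragged across the threshold --- is already ruled out by the hypothesis: if $\P(C_{k,j}\cap B)>0$ then the second alternative of \eqref{a11.5} fails, so the first alternative $C_{k,j}\cup C_{k+1,j}\subset B$ must hold, which for a nonempty partner cell forces $|x_{k+1}-y_j|\geq 1-\delta$ as well, and your same-side argument applies verbatim. Hence no adjustment of $A$ on the merged atom is needed, and the paper makes none; such an adjustment would in fact perturb $y_i$ for every row meeting $G_k\cup G_{k+1}$ and would have to be re-justified. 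Columns with $\P((C_{k,j}\cup C_{k+1,j})\cap B)=0$ contribute zero to the right-hand side of \eqref{a11.1}, so nothing is lost there no matter where $x'_k$ lands. The one residual ambiguity is a partner cell that is literally the empty set, which satisfies ``$\subset B$'' vacuously even when $|x_{k+1}-y_j|<1-\delta$; the hypothesis is meant to be read as the index condition appearing in \eqref{a11.7}--\eqref{a11.8} (both distances $\geq 1-\delta$, or the column carries no $B$-mass), and with that reading your argument is complete as written.
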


In graphical terms, the condition \eqref{a11.5} means that the thick polygonal line extends in a straight fashion along the boundaries of at least two consecutive cells in the interior of the rectangle in Fig.~\ref{fig10}.
There are several such ``long'' thick line segments in Fig.~\ref{fig10}.

\begin{proof}[Proof of Lemma \ref{s29.1}]
Let
\begin{align}
G'_j & = G_j, \quad 1\leq j < k,\label{a11.2}\\
G'_k & = G_k \cup G_{k+1}, \label{a11.6}\\
G'_j & = G_{j+1}, \quad k+1 \leq j \leq m(\calG)-1.\label{a11.3}
\end{align}
Let $\calG'$ be the $\sigma$-field generated by $\{G'_j\}_{1\leq j \leq m(\calG)-1}$.  All other objects in $\bS$ remain unchanged, for example, $A'=A$,  $\calH' = \calH$, etc. 
It follows from \eqref{a11.2}-\eqref{a11.3} that  for 
$1\leq j \leq m(\calH)$,
\begin{align*}
C'_{r,j} &= C_{r,j}, \quad 1\leq r < k,\notag\\
C'_{k,j} &= C_{k,j} \cup C_{k+1,j},\\
C'_{r,j} &= C_{r+1,j}, \quad k+1 \leq r \leq m(\calG)-1.\notag
\end{align*}
 Hence, to prove \eqref{f3.1}, it will suffice to show that for all $j$,
\begin{align}\label{a11.1}
\P(C'_{k,j}\cap B) 
\geq
\P(C_{k,j}\cap B) + \P(C_{k+1,j}\cap B).
\end{align}
It is elementary to check that \eqref{a11.6} implies that $x_k \leq x'_k \leq x_{k+1}$. In view of \eqref{a11.5}, for every $j$, we have either
\begin{align}\label{a11.7}
|x_k-y_j| \geq 1-\delta \quad\text{  and  }\quad
|x_{k+1}-y_j| \geq 1-\delta ,
\end{align}
or
\begin{align}\label{a11.8}
|x_k-y_j| < 1-\delta \quad\text{  and  }\quad
|x_{k+1}-y_j| < 1-\delta .
\end{align}
If \eqref{a11.7} is true then $|x'_k-y_j| \geq 1-\delta$, so $C'_{k,j}\subset B$ and, therefore, \eqref{a11.1} is satisfied.
In the case \eqref{a11.8}, the condition \eqref{a11.1} holds because the right hand side is 0.
We have finished the proof of \eqref{f3.1}. 

It follows from \eqref{a11.2}-\eqref{a11.3} that \eqref{o4.1} is satisfied. 

Some cells that belong to $B$ can coalesce in the upper left corner  in Fig.~\ref{fig10}, if there any to start with, but they cannot disappear. The same remark applies to the lower right corner so the condition \eqref{o1.1}-\eqref{o2.1} holds.
\end{proof}

\begin{lemma} \label{s29.2}
For any  $\bS$ there exists $\bS'$ such that \eqref{f3.1}-\eqref{o2.1} are satisfied and
\begin{align}
m'_-(\calG') &= m'(\calH')- m'_+(\calH')+1,\label{f6.8} \\
 m'_-(\calH') &= m'(\calG') -m'_+(\calG')+1,\label{f6.9}\\
y'_{m'(\calH') - k+1} - x'_k &\geq 1-\delta, \qquad 1\leq k \leq m'_-(\calG'), \label{f6.1} \\
y'_{m'(\calH') - k+1} - x'_{k+1} &< 1-\delta, \qquad 1\leq k \leq m'_-(\calG'), \label{f6.2} \\
y'_{m'(\calH') - k} - x'_k &< 1-\delta, \qquad 1\leq k \leq m'_-(\calG'), \label{f6.3} \\
x'_{m'(\calG') - k+1} - y'_k &\geq 1-\delta, \qquad 1\leq k \leq m'_-(\calH'), \label{f6.4} \\
x'_{m'(\calG') - k+1} - y'_{k+1} &< 1-\delta, \qquad 1\leq k \leq m'_-(\calH'), \label{f6.5} \\
x'_{m'(\calG') - k} - y'_k &< 1-\delta, \qquad 1\leq k \leq m'_-(\calH'), \label{f6.6} \\
\label{f3.12}
x'_1 < x'_2 < \dots < x'_{m(\calG')} & \quad \text{  and   }\quad
y'_1 < y'_2 < \dots < y'_{m(\calH')}.
\end{align}
Moreover,
\begin{align}
\bS' = \bS \qquad \text{or} \qquad 
m'(\calG')< m(\calG)\qquad \text{or} \qquad &m'(\calH')< m(\calH).
\label{o4.3}
\end{align}
\end{lemma}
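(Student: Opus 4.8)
The plan is to prove the lemma by iterating size-reducing transformations until none remains and identifying the terminal configuration with the one described by \eqref{f6.8}--\eqref{f3.12}. Concretely, I would show that whenever $\bS$ fails to satisfy all of \eqref{f6.8}--\eqref{f3.12}, there is a transformation $\calT$ with $\calT(\bS) = \bS''$ that satisfies \eqref{f3.1}--\eqref{o2.1} and has $m''(\calG'') + m''(\calH'') < m(\calG) + m(\calH)$. Since $m(\calG) + m(\calH) \ge 2$ is a positive integer and each of \eqref{f3.1}--\eqref{o2.1} is stable under composition (the inequality \eqref{f3.1} chains, the bounds \eqref{s28.6} chain, and the implication \eqref{o1.1}--\eqref{o2.1} propagates because it is vacuous unless both corners of $B$ carry positive mass), iterating $\calT$ yields the desired $\bS'$ in finitely many steps; the alternative \eqref{o4.3} holds because any nontrivial reduction strictly lowers a partition size.

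First I would dispose of \eqref{f3.12}. If $x_k = x_{k+1}$ for some $k$ (and symmetrically for the $y$'s), then for every $j$ the conditions $C_{k,j}\subset B$ and $C_{k+1,j}\subset B$ are equivalent, since $|x_k - y_j| = |x_{k+1} - y_j|$. In the case both hold, both cells lie in $B$; in the case both fail, $\P((C_{k,j} \cup C_{k+1,j}) \cap B) = 0$. Thus hypothesis \eqref{a11.5} of Lemma~\ref{s29.1} is met, and merging columns $k, k+1$ produces a transformation with $m'(\calG') = m(\calG) - 1$ satisfying \eqref{f3.1}--\eqref{o2.1}. Hence I may assume strict monotonicity throughout.

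The substance is the staircase tightness. Writing $r(k)$ for the lowest row of the up-set $\{j : y_j - x_k \ge 1-\delta\}$, which is non-decreasing in $k$, conditions \eqref{f6.1} and \eqref{f6.3} pin $r(k) = m(\calH)-k+1$; this is decreasing in $k$ while $r$ must be non-decreasing, so $m_-(\calG) \le 1$, and together with the counting identity \eqref{f6.8} the upper-left part of $B$ collapses to the single cell $C_{1,m(\calH)}$, and symmetrically \eqref{f6.4}--\eqref{f6.6}, \eqref{f6.9} collapse the lower-right part to $C_{m(\calG),1}$. So the goal is to merge away every column and every row that creates an extra step of the staircase. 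Whenever two consecutive columns (or rows) have identical $B$-pattern---a flat stretch of the boundary, as in the long segments of Fig.~\ref{fig10}---Lemma~\ref{s29.1} applies verbatim and removes one column (or row).

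The main obstacle is the remaining case of a \emph{unit} step whose overhang cell has positive probability: consecutive columns $k, k+1$ that agree on $B$ except at the row $r(k)$, where $C_{k,r(k)} \subset B$ while $C_{k+1,r(k)}$ is disjoint from $B$ and $\P(C_{k,r(k)}) > 0$. Here Lemma~\ref{s29.1} does not apply directly, and neither the column-merge nor the row-merge is guaranteed to keep the overhang inside $B$, since the induced value $x'_k$ or $y'$ may push it out. I would resolve this by a preliminary transfer transformation that moves the mass of $C_{k,r(k)}$ onto the neighbouring in-$B$ cell $C_{k,r(k)+1}$ of the same column (reassigning the relevant points from $H_{r(k)}$ to $H_{r(k)+1}$), making the differing cell $\P$-null and so enabling the subsequent merge via Lemma~\ref{s29.1}. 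The delicate point---and where I expect the real work to lie---is that this transfer perturbs the row conditional expectations $y_{r(k)}$ and $y_{r(k)+1}$, so one must verify that it moves no other cell out of $B$, that \eqref{f3.1} is preserved, and that the two-corner condition \eqref{o1.1}--\eqref{o2.1} survives; Lemmas~\ref{o1.2} and~\ref{s29.5} would be invoked to create and control the auxiliary mass needed for these checks. Once each step is either flattened and merged or already tight, no reduction remains, strict monotonicity holds, and the terminal configuration satisfies \eqref{f6.8}--\eqref{f3.12}, completing the induction.
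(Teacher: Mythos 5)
Your first two paragraphs track the paper's proof of this lemma essentially exactly: one iterates the transformation of Lemma \ref{s29.1} in the column direction and then its row analogue until no index satisfies \eqref{a11.5} or its transpose; termination follows from \eqref{o4.1}, properties \eqref{f3.1}--\eqref{o2.1} are inherited from Lemma \ref{s29.1}, \eqref{o4.3} is immediate, and \eqref{f3.12} is obtained by the same ``otherwise \eqref{a11.5} would still hold'' argument you give. The paper stops there: it reads \eqref{f6.8}--\eqref{f6.6} as the ``rigorous version'' of the statement that the boundary of $B$ turns at every interior cell corner, i.e.\ that no two consecutive columns (or rows) have identical $B$-patterns --- which is precisely what the exhausted iteration of Lemma \ref{s29.1} delivers. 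In that reading $\calD_-$ is the whole anti-diagonal staircase of $m_-(\calG)$ cells (this is what Fig.~\ref{fig1} shows, and what Lemmas \ref{s29.8} and \ref{o3.10} later rely on when they treat cells such as $C_{m_-(\calG),m(\calH)}$ and $C_{m_-(\calG)-1,m(\calH)-1}$ as members of $\calD_-$), not a single cell.

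The genuine gap is your third paragraph. You read the indices in \eqref{f6.1}--\eqref{f6.3} literally, conclude that the upper-left part of $B$ must collapse to the single cell $C_{1,m(\calH)}$, and to achieve this you introduce an extra ``transfer'' transformation that moves the mass of an overhang cell $C_{k,r(k)}$ into $C_{k,r(k)+1}$. That step is asserted, not proved, and it is exactly where the argument breaks: reassigning mass between rows changes $q_{r(k)}$, $q_{r(k)+1}$ and hence $y_{r(k)}$, $y_{r(k)+1}$, which can eject cells $C_{i,r(k)+1}$ with $i\neq k$ from $B$ and destroy \eqref{f3.1}; you explicitly defer this verification (``where I expect the real work to lie''), and it cannot in general be carried out losslessly. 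Collapsing the staircase corner to one or two cells while (almost) preserving $\P(B)$ is the content of the much later Lemma \ref{o3.10}, which the paper only achieves up to an $\eps$ loss, after the machinery of Lemmas \ref{s29.8}, \ref{s30.5} and \ref{s30.11} and a long branching case analysis. So you are either proving a statement strictly stronger than the lemma as the paper intends and uses it (the printed indices in \eqref{f6.1}--\eqref{f6.3} are indeed hard to reconcile with the rest of the paper, and your observation that they literally force $m_-(\calG)\le 1$ is a fair catch), or, if that stronger statement were the target, your proof has an unfilled hole at its crux.
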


\begin{proof}
We apply  the transformation defined in Lemma \ref{s29.1} repeatedly, as long as there is some $k$ satisfying \eqref{a11.5}. In view of \eqref{o4.1}, these transformations strictly  decrease $m(\calG)$ so they have to stop at some point, either when $m(\calG) =1$ or when \eqref{a11.5} is not satisfied any more. Then we exchange the roles of $\calG$ and $\calH$ and we collapse pairs of ``rows'' $H_k$ and $H_{k+1}$ into $H'_k$ in a similar manner until the condition analogous to \eqref{a11.5} fails.

If $\bS'$ is the final result of the transformations described above then $\bS'$ may be represented graphically as follows. 
The thick polygonal line must turn at every cell corner in the interior of the rectangle (see Fig.~\ref{fig1}), so that condition \eqref{a11.5} is not satisfied and neither is its analogue with the roles of $\calG$ and $\calH$ exchanged. In other words, the boundary of $B$ in the interior of the rectangle is a zigzag line that turns at every opportunity. It is elementary to check that
conditions \eqref{f6.8}-\eqref{f6.6} are a rigorous version of this graphical description. 

To see that \eqref{f3.12} holds, recall that we have weak inequalities in view of \eqref{f3.7}. If any weak inequality in  the first set  is an equality, say $x'_k= x'_{k+1}$, then 
\eqref{a11.5} is satisfied and we can reduce the number of generators $m'(\calG')$ using the transformation defined in Lemma \eqref{s29.1}. A similar argument applies to the second set of inequalities in \eqref{f3.12}. 
However, by the argument given in the first part of this proof, $m'(\calG')$ and $m'(\calH')$ cannot be decreased any more by the transformation defined in Lemma \eqref{s29.1}, so \eqref{f3.12} must be true.

Conditions \eqref{f3.1}-\eqref{o2.1} are satisfied because this is the case for the transformation defined in  Lemma \eqref{s29.1}. The alternative stated in \eqref{o4.3} follows from \eqref{o4.1}.
\end{proof}

\begin{notation}
If $(k,j) = (k, m(\calH) - k+1)$ and conditions \eqref{f6.1}-\eqref{f6.3}, without primes, are satisfied then we will write
$(k,j) \in\calD_-$.
If $(j,k) = ( m(\calG) - k+1,k)$ and conditions \eqref{f6.4}-\eqref{f6.6}, without primes, are satisfied then we will write
$(j,k) \in\calD_+$. In Fig.~\ref{fig1}, the cells belonging to $\calD_-$ and $\calD_+$ are crossed. The set $\calD_-\cup \calD_+$ is the ``internal border'' of $B$.
Note that if $(k,j) \in\calD_-$ then $x_k \leq \delta$ and $y_j \geq 1-\delta$. If $(k,j) \in\calD_+$ then $x_k \geq 1-\delta$ and $y_j \leq \delta$. 
\end{notation}

\begin{lemma}\label{s29.3}
Suppose that 
$\bS $ satisfies \eqref{f6.8}-\eqref{f3.12} and there exist $k\in\{1,\dots, m(\calG)-1\}$  and $i\in\{1,\dots, m(\calH)\}$ such that
\begin{align}\label{f5.1}
 &C_{k,i} \subset B , \quad C_{k+1,i} \subset B^c, \quad\P(C_{k,i})=0 .
\end{align}
Then there exists $\bS'$ such that \eqref{f3.1} and \eqref{o1.1}-\eqref{o2.1} are satisfied, and
\begin{align}
m'(\calG')= m(\calG)-1, \qquad &m'(\calH')\leq m(\calH).
\label{o4.2}
\end{align}
\end{lemma}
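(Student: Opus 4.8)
The plan is to collapse the two neighbouring columns $G_k$ and $G_{k+1}$ into one, using the same construction as in Lemma \ref{s29.1}: set $G'_k = G_k \cup G_{k+1}$, relabel the remaining columns, and leave $\calH' = \calH$ and $A' = A$ unchanged. This immediately gives $m'(\calG') = m(\calG)-1$ and $m'(\calH') = m(\calH)$, i.e. \eqref{o4.2}, and since the merged column carries $x'_k = \P(A \mid G_k \cup G_{k+1}) \in [x_k, x_{k+1}]$, the ordering \eqref{f3.7} survives. The difference from Lemma \ref{s29.1} is that its hypothesis \eqref{a11.5} fails at the row $i$, where $C_{k,i} \subset B$ but $C_{k+1,i} \subset B^c$; so the inequality \eqref{f3.1} must be re-proved, and this is exactly where $\P(C_{k,i}) = 0$ enters.

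First I would pin down the location of $(k,i)$. Because its right neighbour leaves $B$ while $(k,i)$ itself is in $B$, and because any cell of the lower-right corner $\{x - y \geq 1-\delta\}$ keeps its right neighbour in $B$ as well (as $x_{k+1} \geq x_k$), the cell $(k,i)$ must sit on the upper-left corner $\{y - x \geq 1-\delta\}$; in the notation preceding the lemma, $(k,i) \in \calD_-$, whence $k \leq m_-(\calG)$.

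The heart of the proof is the cell-by-cell estimate
\begin{align*}
\P(C'_{k,j} \cap B') \geq \P(C_{k,j} \cap B) + \P(C_{k+1,j} \cap B), \qquad C'_{k,j} = C_{k,j} \cup C_{k+1,j},
\end{align*}
for every row $j$. When $C'_{k,j} \subset B'$ this is immediate, so the only rows that can fail are those with $C'_{k,j} \subset (B')^c$ yet a positive right-hand side. Since $x_k \leq x'_k \leq x_{k+1}$, a cell of $C_{k+1,j}$ in the upper-left corner and a cell of $C_{k,j}$ in the lower-right corner both remain in $B'$; hence the dangerous rows are only those where $C_{k,j}$ is in the upper-left corner while $C_{k+1,j}$ is not (or, symmetrically, where $C_{k+1,j}$ is in the lower-right corner while $C_{k,j}$ is not). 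The zigzag conditions \eqref{f6.1}-\eqref{f6.3} force the upper-left boundaries of two consecutive columns to differ by a single step, so the only row of the first kind is $j = i$, and there the right-hand side vanishes because $\P(C_{k,i}) = 0$. This yields \eqref{f3.1}. Finally \eqref{o1.1}-\eqref{o2.1} persist, since every positive-mass cell of $B$ in either corner is either untouched or merged into a cell still lying in $B'$, so neither corner of $B$ is emptied.

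The step I expect to be the main obstacle is controlling the rows of the second kind, i.e. ruling out that the averaged value $x'_k$ drags a positive-mass lower-right cell of column $k+1$ out of $B'$. By $(k,i) \in \calD_-$ we have $k \leq m_-(\calG)$, so column $k+1$ can meet the lower-right corner only in the degenerate case $k+1 = m_+(\calG) = m_-(\calG)+1$, where the two corners are adjacent and column $k$ in fact carries no $B$-mass; this interface is the one place where the clean one-row-difference argument breaks and must be handled on its own. Establishing rigorously, from \eqref{f6.1}-\eqref{f6.6}, that away from this interface the two columns disagree about $B$ only at the single empty row $i$ is the crux of the argument.
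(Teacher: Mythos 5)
Your construction is exactly the paper's: merge $G_k$ and $G_{k+1}$ into a single generator, leave $A$ and $\calH$ alone, and reduce \eqref{f3.1} to the cell-by-cell inequality $\P(C'_{k,j}\cap B')\ge \P(C_{k,j}\cap B)+\P(C_{k+1,j}\cap B)$, handling the row $i$ via $\P(C_{k,i})=0$ and all other rows via the zigzag structure. Your preliminary observations --- that $(k,i)$ must lie in the upper-left part of $B$ (hence in $\calD_-$), and that since $x_k\le x'_k\le x_{k+1}$ the only dangerous rows are those where exactly one of the two cells is in $B$ and the merge moves $x'_k$ the wrong way --- are correct and slightly more explicit than what the paper writes.

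The step you leave open is exactly the step the paper does not argue separately either: for every $j\ne i$ the paper simply asserts, citing \eqref{f6.8}--\eqref{f6.6}, that $C_{k,j}\cup C_{k+1,j}\subset B$ or $\P((C_{k,j}\cup C_{k+1,j})\cap B)=0$, and this single assertion is what disposes of both your ``first kind'' and ``second kind'' rows. So you have not missed an argument that the paper supplies; you have isolated the one configuration (the interface $k=m_-(\calG)$, $k+1=m_+(\calG)$, where column $k+1$ contains a cell of $\calD_+$ inside $B$ while column $k$ does not) in which that assertion is not a formal consequence of the zigzag conditions. Your worry there is legitimate: in that configuration $x'_k$ is the $p$-weighted average of $x_k\le\delta$ and $x_{k+1}\ge 1-\delta$, so if $p_k$ dominates $p_{k+1}$ the merged column can drag the lower-right cell of column $k+1$ out of $B'$, and \eqref{f3.1} then needs its own justification (or that configuration needs to be ruled out, or a different reduction of $m(\calG)$ chosen). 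As written, your proposal matches the paper's route but, by its own admission, does not close this case; neither asserting it as ``elementary'' (as the paper effectively does) nor flagging it as ``to be handled on its own'' (as you do) constitutes a proof of it.
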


The condition \eqref{f5.1} is illustrated in Fig.~\ref{fig10} as follows: in row $H_9$, the   rightmost cell in $B$ (within thick polygonal line) is white, i.e., its probability is 0.
The transformation defined in Lemma \ref{s29.3} is illustrated in Figs. \ref{fig10} and \ref{fig11} (see the caption of Fig.~\ref{fig11}).
\begin{figure} \includegraphics[width=1.0\linewidth]{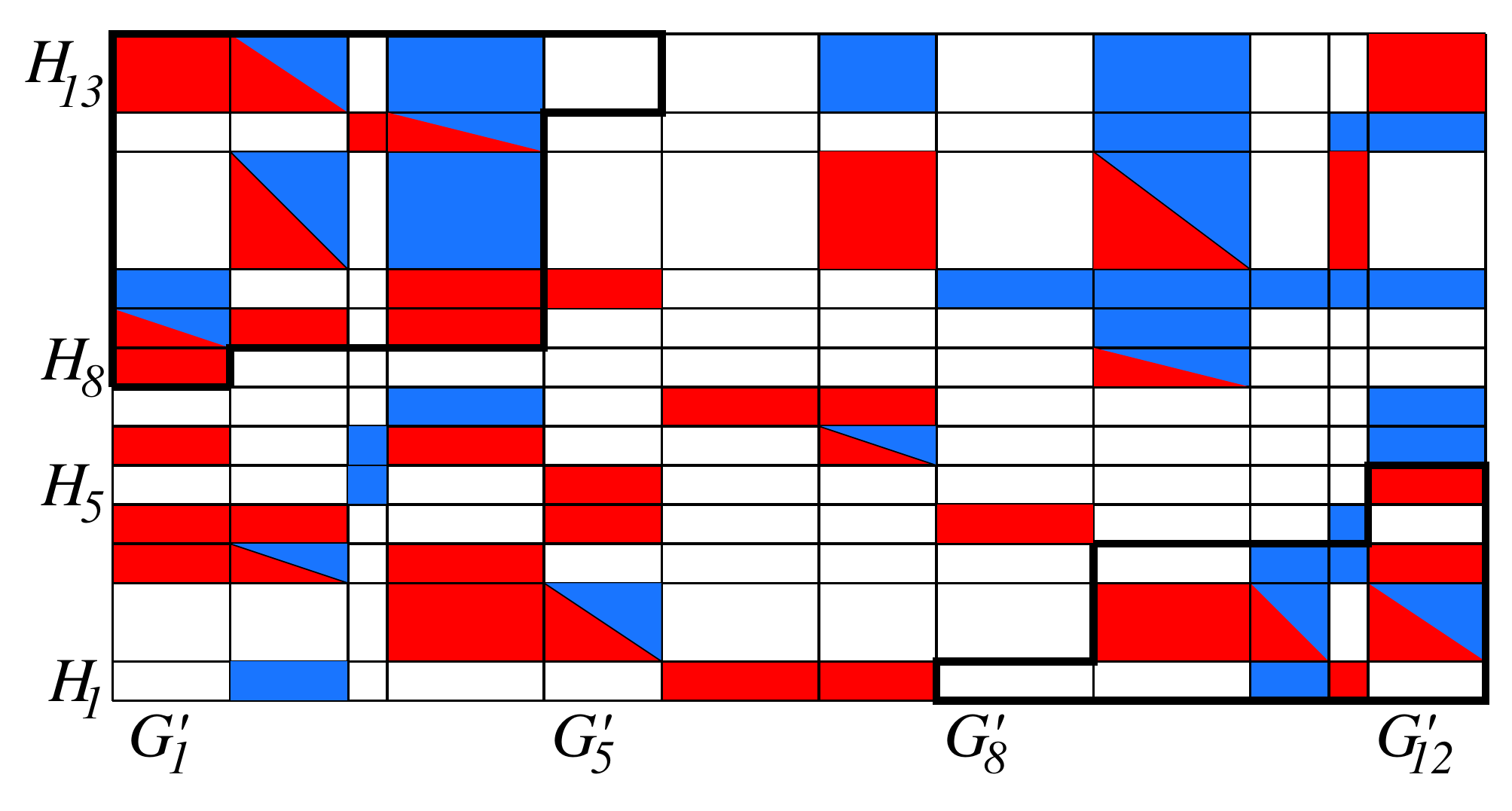}
\caption{ (Color coded)
This configuration $\bS'$ has been obtained by transforming $\bS$ in Fig.~\ref{fig10} according to \eqref{a11.21}.
For the meaning of colors and thick lines, see Fig.~\ref{fig10}.
Columns $ G_4$ and $G_5$ in Fig.~\ref{fig10} have been combined to form column $G'_4$ in the present figure. This is because in row $H_9$ in  Fig.~\ref{fig10}, the  rightmost cell in $B$ (within thick polygonal line) is white, i.e., its probability is 0.
}
\label{fig11}
\end{figure}

\begin{proof}[Proof of Lemma \ref{s29.3}]
Let
\begin{align}
G'_j & = G_j, \quad 1\leq j < k,\label{a11.20}\\
G'_k & = G_k \cup G_{k+1}, \label{a11.21}\\
G'_j & = G_{j+1}, \quad k+1 \leq j \leq m(\calG)-1.\label{a11.22}
\end{align}
Let $\calG'$ be the $\sigma$-field generated by $\{G'_j\}_{1\leq j \leq m(\calG)-1}$.  All other objects in $\bS$ remain unchanged, for example, $A'=A$,  $\calH' = \calH$, etc. 
It follows from \eqref{a11.20}-\eqref{a11.22} that  for 
$1\leq j \leq m(\calH)$,
\begin{align}\label{o3.4}
C'_{r,j} &= C_{r,j}, \quad 1\leq r < k,\\
C'_{k,j} &= C_{k,j} \cup C_{k+1,j},\label{o3.5}\\
C'_{r,j} &= C_{r+1,j}, \quad k+1 \leq r \leq m(\calG)-1.\label{o3.6}
\end{align}
 Hence, to prove \eqref{f3.1}, it will suffice to show that for all $j$,
\begin{align}\label{a11.23}
\P(C'_{k,j}\cap B') 
\geq
\P(C_{k,j}\cap B) + \P(C_{k+1,j}\cap B).
\end{align}
It is elementary to check that \eqref{a11.21} implies that $x_k \leq x'_k \leq x_{k+1}$.

Consider $j\ne i$. It follows from conditions \eqref{f6.8}-\eqref{f6.6} 
(see the remark about the zigzag line in the proof of Lemma \ref{s29.2}) that
\begin{align*}
 C_{k,j}\cup C_{k+1,j} \subset B 
 \quad \text{  or  }\quad
 \P(( C_{k,j}\cup C_{k+1,j}) \cap B )=0.
\end{align*}
In the first case,
\begin{align*}
|x_k-y_j| \geq 1-\delta \quad\text{  and  }\quad
|x_{k+1}-y_j| \geq 1-\delta .
\end{align*}
This implies that
$|x'_k-y'_j| =|x'_k-y_j|\geq 1-\delta$, so $C'_{k,j}\subset B'$ and, therefore, 
\begin{align*}
\P(C'_{k,j}\cap B') 
=\P(C'_{k,j}) 
= \P(C_{k,j}) + \P(C_{k+1,j})
\geq
\P(C_{k,j}\cap B) + \P(C_{k+1,j}\cap B).
\end{align*}
Hence
\eqref{a11.23} is satisfied.

If $\P(( C_{k,j}\cup C_{k+1,j}) \cap B )=0$ then \eqref{a11.23} holds because its right hand side  is 0. 

In the case when $j=i$, our assumptions \eqref{f5.1} imply that
$\P(C_{k,i}\cap B) = \P(C_{k+1,i}\cap B)=0$, so we conclude that \eqref{a11.23} is true. This completes the proof of \eqref{f3.1}.

It follows from \eqref{a11.20}-\eqref{a11.22} that  \eqref{o4.2} is satisfied.

Condition \eqref{o1.1}-\eqref{o2.1} follows from \eqref{o3.4}-\eqref{a11.23}.
\end{proof}

\begin{lemma} \label{o3.7}
For any  $\bS$ there exists $\bS'$ such that \eqref{f3.1}-\eqref{o2.1} and \eqref{f6.8}-\eqref{f3.12} are satisfied, and  $\P(C'_{i,j}) >0$ for all $(i,j) \in\calD'_- \cup\calD'_+$.
\end{lemma}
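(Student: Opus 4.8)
The plan is to reduce $\bS$ to the desired form by alternately applying Lemma \ref{s29.2} and Lemma \ref{s29.3}, driving down the total number of generators $m(\calG)+m(\calH)$ until no zero-probability border cell remains. First I would apply Lemma \ref{s29.2} to obtain a configuration satisfying \eqref{f3.1}-\eqref{o2.1} together with the zigzag conditions \eqref{f6.8}-\eqref{f3.12}; note that this is exactly the hypothesis required to invoke Lemma \ref{s29.3}. If every cell of $\calD_-\cup\calD_+$ already has positive probability, this configuration is the desired $\bS'$ and there is nothing more to do.

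Otherwise, suppose some cell $(k,i)\in\calD_-$, with $i=m(\calH)-k+1$, has $\P(C_{k,i})=0$. I would then check that this cell meets the hypothesis \eqref{f5.1} of Lemma \ref{s29.3}. Indeed, $(k,i)\in\calD_-$ gives $y_i-x_k\geq 1-\delta$ by \eqref{f6.1}, so $C_{k,i}\subset B$, while \eqref{f6.2} gives $y_i-x_{k+1}<1-\delta$. The one point needing care is that the right neighbor $C_{k+1,i}$ lies entirely in $B^c$, i.e.\ that it is not swallowed by the lower-right part of $B$. This is where $\delta<1/2$ enters: if we had $x_{k+1}-y_i\geq 1-\delta$ in addition to $y_i-x_k\geq 1-\delta$, then $x_{k+1}\geq y_i+1-\delta\geq x_k+2(1-\delta)\geq 2(1-\delta)>1$, contradicting $x_{k+1}\leq 1$. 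Hence $|x_{k+1}-y_i|<1-\delta$, so $C_{k+1,i}\subset B^c$ and \eqref{f5.1} holds. Applying Lemma \ref{s29.3} then yields a configuration satisfying \eqref{f3.1} and \eqref{o1.1}-\eqref{o2.1} with $m'(\calG')=m(\calG)-1$.

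For a zero-probability cell in $\calD_+$ I would invoke the symmetry obtained by exchanging the roles of $\calG$ and $\calH$ (reflection across the main diagonal of Fig.~\ref{fig1}), which carries $\calD_+$ to $\calD_-$; the corresponding analogue of Lemma \ref{s29.3} then eliminates the offending cell by merging two consecutive rows, giving $m'(\calH')=m(\calH)-1$. In either case a single application strictly decreases $m(\calG)+m(\calH)$ but destroys the zigzag structure, so I would re-apply Lemma \ref{s29.2} and repeat the whole cycle.

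The termination argument then closes the proof: each use of Lemma \ref{s29.3} (or its mirror) lowers $m(\calG)+m(\calH)$ by at least one, while Lemma \ref{s29.2} never increases it, so the procedure must stop after finitely many steps. It can only stop when, after a final application of Lemma \ref{s29.2} restoring \eqref{f3.1}-\eqref{o2.1} and \eqref{f6.8}-\eqref{f3.12}, no cell of $\calD_-\cup\calD_+$ has zero probability, which is precisely the assertion of the lemma. The main obstacle I anticipate is the bookkeeping of the second paragraph, namely confirming that a vanishing border cell always satisfies the exact hypothesis \eqref{f5.1}; the geometric reason it does is the staircase shape forced by \eqref{f6.8}-\eqref{f3.12} together with the strict inequality $\delta<1/2$, which prevents the two opposite corners of $B$ from touching in a single column.
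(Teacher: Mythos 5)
Your argument is correct and is essentially the paper's proof: both alternate the zigzag normalization of Lemma \ref{s29.2} with the column/row merge of Lemma \ref{s29.3} (extended by the $\calG\leftrightarrow\calH$ symmetry to cells of $\calD_+$), and both terminate because each non-identity merge strictly decreases $m(\calG)+m(\calH)$ while Lemma \ref{s29.2} never increases it. Your explicit verification that a zero-probability cell of $\calD_-$ satisfies hypothesis \eqref{f5.1} (using $\delta<1/2$ to rule out the right neighbour lying in the lower-right part of $B$) is a detail the paper leaves to the reader, and is a welcome addition.
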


\begin{proof}
Let $\calT_1$ denote the transformation defined in Lemma \ref{s29.2}.

Lemma \ref{s29.3} can be, obviously, generalized to any situation when $C_{k,i} \subset B$, $\P(C_{k,i})=0$, and $C_{k,i}$ is adjacent to  a cell  in $B^c$. Fix any order, denoted $\prec$, for the set of pairs of cells. Let $\calT_2$ denote the transformation defined in Lemma \ref{s29.3}, applied to the first (according to the order $\prec$) pair of cells satisfying the generalized conditions described at the beginning of this paragraph.
Recall that, by convention, if there is no such pair of cells then $\calT_2(\bS) = \bS$.

Let $\bS_0 = \bS$ and $\bS_k = \calT_1( \calT_2(\bS_{k-1}))$ for $k\geq 1$.
Transformations $\calT_1$ and $\calT_2$  decrease $m(\calG)$ or $m(\calH)$, unless they act as the identity transformation, by \eqref{o4.3} and \eqref{o4.2}. Hence, 
for some $k_1$ and all $k\geq k_1$, $\bS_k = \bS_{k_1}$.
We let $\bS'= \bS_{k_1}$ and note that $\calT_1(\bS') = \calT_2(\bS') = \bS'$. This implies that $\bS'$ must satisfy the properties listed in Lemmas \ref{s29.2} and \ref{s29.3}, i.e.,  \eqref{f6.8}-\eqref{f3.12}, and the property that there is no cell
$C_{k,i} \subset B$ that is adjacent to  a cell  in $B^c$ and such that $\P(C_{k,i})=0$. A different way of saying this is that $\P(C'_{i,j}) >0$ for all $(i,j) \in\calD'_- \cup\calD'_+$. 

Conditions \eqref{f3.1}-\eqref{o2.1} hold for the transformation defined in this proof because they hold for $\calT_1$ and $\calT_2$. 
\end{proof}

\begin{lemma} \label{s29.8}
Suppose that $\bS$ satisfies conditions \eqref{f6.8}-\eqref{f3.12}. There exists $\bS''$ such that \eqref{f3.1}-\eqref{o2.1} and \eqref{f6.8}-\eqref{f3.12} hold, and
\begin{align}\label{s29.4}
\P(C''_{k,j}) >
\P(C''_{k,j}\cap  (A'')^c) =0 \quad \text{  or  }\quad 
\P(C''_{k,j}) > \P(C''_{k,j}\cap  A'') =0,
\end{align}
for all $(k,j) \in\calD''_- \cup \calD''_+$.
\end{lemma}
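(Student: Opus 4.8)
The plan is to purify the border cells one at a time, exploiting the two symmetries of the problem---the exchange of $(\calG,\calH)$ and the exchange of $(A,A^c)$, each of which maps $\calD_-$ to $\calD_+$---to reduce everything to a single cell of $\calD_-$. Thanks to Lemma \ref{o3.7} we may assume the input already satisfies the zigzag conditions \eqref{f6.8}-\eqref{f3.12} and that every border cell has positive probability. Fix a cell $C_{k,j}\in\calD_-$ that is \emph{not} pure, so $\P(C_{k,j}\cap A)>0$ and $\P(C_{k,j}\cap A^c)>0$, and recall $x_k\le\delta$, $y_j\ge 1-\delta$. The core transformation makes this cell entirely contained in $A^c$ by relocating its $A$-content, of mass $a:=\P(C_{k,j}\cap A)$, into other cells of $B$ lying in the \emph{same} row $H_j$ (to the left when $k\ge 2$, and into the interior cell $C_{2,m(\calH)}$ for the corner $k=1$; the degenerate cases of a single row or column are immediate). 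I would first use Lemma \ref{s29.5} to split off an atom of mass exactly $a$ from $C_{k,j}\cap A$ and then reattach it, still inside $A$, to the neighbouring column.

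Relocating $A$-mass within row $H_j$ leaves $y_j$ unchanged, while removing $A$-mass from column $G_k$ strictly lowers $x_k$; hence $y_j-x_k$ can only increase, so $C_{k,j}$ remains in $B$ and the relocation itself does not change $\P(B)$. After the move $C_{k,j}\subset A^c$, as required by \eqref{s29.4}. The side effects are that the conditional expectation of each receiving column rises and the relations \eqref{f6.1}-\eqref{f3.12} may be disturbed; I would then re-apply Lemma \ref{o3.7} to restore \eqref{f6.8}-\eqref{f3.12} together with the positivity of all border cells, and this clean-up preserves \eqref{f3.1}-\eqref{o2.1} by construction. Writing $\bS''$ for the composition of the core move with this clean-up, $\bS''$ satisfies all of the required structural conditions. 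Termination follows from a lexicographic potential: each invocation of Lemma \ref{o3.7} either leaves the configuration unchanged or strictly decreases $m(\calG)+m(\calH)$ (a merge may temporarily re-create an impure border cell, which is why the count alone is not monotone), and between successive decreases of $m(\calG)+m(\calH)$ the number of impure $\calD_-$ cells strictly drops, so after finitely many steps every border cell is pure.

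The main obstacle is controlling $\P(B)$ through this side effect rather than through the relocation. Pushing $A$-mass into a neighbouring column $G_{k-1}$ raises $x_{k-1}$, and if it rises too far the border cell $C_{k-1,j+1}$---the inner corner shared by column $G_{k-1}$ and row $H_{j+1}$---can fall out of $B$, decreasing $\P(B)$ and defeating \eqref{f3.1}. The delicate point is that \eqref{f6.1} for $(k-1,j+1)$ only gives $y_{j+1}-x_{k-1}\ge 1-\delta$, so the available slack may be zero. Resolving this is the crux: I would not dump the entire mass $a$ into one column but distribute it across the interior columns of the upper-left staircase, raising each $x_{k'}$ only within its own strict slack, which turns the question into a transportation-feasibility statement governed by the inequalities $x_k\le\delta$ and $y_j\ge 1-\delta$; processing the staircase from the cell nearest the diagonal outward then guarantees that mass always flows into cells still scheduled for purification. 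Verifying that this ordering together with the per-column slack budget lets one evacuate the whole $A$-content of each border cell without ever forcing a cell out of $B$---in particular handling the tight chains of zero-slack cells, where border cells must be merged or purified jointly---is the genuinely delicate, and largely bookkeeping, part of the argument.
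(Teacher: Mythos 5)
Your overall strategy (purify the border cells one at a time, iterate, and control termination by a potential combining $m(\calG)+m(\calH)$ with the number of impure cells) matches the paper in spirit, but your core purification move is different from the paper's, and the difference is exactly where your argument has a genuine gap. You evacuate the $A$-mass of an impure cell $C_{k,j}\in\calD_-$ into other columns of the same row, and you correctly identify the obstruction: raising $x_{k'}$ of a receiving column can push the border cell of that column out of $B$, since \eqref{f6.1} only guarantees $y_{m(\calH)-k'+1}-x_{k'}\ge 1-\delta$ and the slack can be zero. You defer this to a ``transportation-feasibility'' claim, but that claim fails in general: in tight configurations (e.g.\ ones modeled on the extremal example of Proposition \ref{s28.4}, where the border inequalities are equalities) every receiving column has zero slack, so no $A$-mass can be relocated anywhere without violating \eqref{f3.1}. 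Your suggestion to merge or jointly purify ``tight chains'' is not backed by any argument, and this is the crux of the lemma, not bookkeeping.

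The paper sidesteps the transportation problem entirely by modifying $A$ \emph{in place} rather than moving mass between cells: it converts an amount $\alpha$ of $C_{k,j}\cap A^c$ into $A$, which raises $x_k$ by $\alpha/p_k$ and $y_j$ by $\alpha/q_j$ \emph{simultaneously}, because the converted mass lies in both $G_k$ and $H_j$. Converting $A^c$ into $A$ when $p_k\ge q_j$ (and $A$ into $A^c$ otherwise) makes $y_j-x_k$ nondecreasing, so no cell leaves $B$ and no slack budget is needed; the constraints $\alpha/p_k+x_k\le x_{k+1}\wedge\delta$, $\alpha/q_j+y_j\le y_{j+1}\wedge 1$ and $\alpha\le\P(C_{k,j}\cap A^c)$ then force, at the maximal $\alpha$, either a purified cell or a coincidence $x'_k=x'_{k+1}$ or $y'_j=y'_{j+1}$ that lets Lemma \ref{s29.2} strictly decrease $m(\calG)+m(\calH)$, which drives the termination argument. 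To salvage your route you would have to prove the feasibility claim you left open; the in-place conversion with the $p_k$ versus $q_j$ comparison is the idea your proposal is missing.
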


We used double primes in the statement of the lemma so that $\bS''$ is not confused with $\bS'$ constructed in the first step of the proof.

\begin{proof}[Proof of Lemma \ref{s29.8}]
The main idea of the proof is to fix  $(k,j) \in\calD_-\cup \calD_+$ and  move as much as possible of $A^c$ inside $C_{k,j}$ to $A$ without destroying those properties of $\bS$ that need to be preserved. If this is not possible, a similar transformation is applied to $A$ in place of $A^c$. Then the transformation is repeatedly  applied to all $(k,j) \in\calD_-\cup \calD_+$.

First we apply the transformation defined in Lemma \ref{o3.7}, if necessary, so that we can assume that  \eqref{o1.1} and \eqref{f6.8}-\eqref{f3.12} are satisfied for $\bS$, and  $\P(C_{i,j}) >0$ for all $(i,j) \in\calD_- \cup\calD_+$.

\medskip
\emph{Step 1}.
Consider any $(k,j) \in\calD_- \cup \calD_+$. If 
\begin{align}\label{s29.7}
\P(C_{k,j}\cap  (A)^c) =0 \quad \text{  or  }\quad \P(C_{k,j}\cap  A) =0
\end{align}
then we let $\bS' =\bS$ and we let $\calT_3(k,j)$ act as the identity transformation, i.e., $\calT_3(k,j)(\bS)=\bS$. In this case at least one of the conditions in \eqref{f6.15} (see below) holds. 

Otherwise we proceed as follows.
Assume that $(k,j) \in\calD_-$ and  
\begin{align}\label{j26.5}
 p_{k} \geq  q_{j}.
\end{align}
Let $\alpha\geq 0$  be the largest real number such that the following three conditions are satisfied,
\begin{align}\label{j23.1}
\frac{\alpha }
{ p_k }+ x_{k} &\leq  x_{k+1}\land \delta,\\
\label{j23.2}
\frac{\alpha }
{ q_j } + y_{j} &\leq  y_{j+1} \land 1,\\
\alpha & \leq \P(C_{k,j}\cap A^c).\label{f6.7}
\end{align}
Here, if $k = m(\calG)$ then, by convention, $x_{k+1}=\infty$ and, similarly,
if $j = m(\calH)$ then $y_{j+1}=\infty$.
At least one of the inequalities \eqref{j23.1}-\eqref{f6.7} is the equality. 

We make $\calF$ finer, if necessary (see Lemma \ref{s29.5}), and find an event $A_* \subset C_{k,j}\cap A^c$ such that $\P(A_*) = \alpha$. Then we let $A'= A\cup A_*$. The other elements of $\bS$ remain unchanged.
We have assumed that \eqref{s29.7} does not hold so
\begin{align}\label{o4.5}
\P(C'_{k,j}) \geq \P(C'_{k,j}\cap A')\geq \P(C_{k,j}\cap A)>0.
\end{align}
We have
\begin{align}\label{j22.1}
x_{k}' &= \frac{\alpha }
{ p_k }+ x_{k},\\
x'_i &= x_i, \qquad \text{  for  } i\ne k,\label{j26.8}\\
y'_{j} &= \frac{\alpha }
{ q_j } + y_{j},\label{j26.6}\\
y'_i &= y_i,\qquad  \text{  for  }  i\ne j.\label{j25.2}
\end{align}

Clearly, condition \eqref{s28.6} is satisfied.
To prove \eqref{f3.1} and \eqref{o1.1}-\eqref{o2.1}, it will suffice to show that for all $i$ and $n$,
\begin{align}\label{j26.3}
\text{  If } \quad |x_n - y_i|\geq 1-\delta \quad 
\text{  then  }  \quad |x'_n - y'_i|\geq 1-\delta .
\end{align}
In view of \eqref{j22.1}-\eqref{j25.2}, \eqref{j26.3} holds if
$n\ne k$ and $i\ne j$.

Conditions \eqref{f3.12}, \eqref{f6.1}-\eqref{f6.3} and the assumption that $(k,j) \in\calD_-$ imply that
 $|x_k - y_i|\geq 1-\delta$ if and only if 
$i\geq j$. It follows from  \eqref{j26.5}, \eqref{j22.1} and \eqref{j26.6}-\eqref{j25.2} that if $i \geq j$ then
\begin{align*}
y'_{i} - x'_k \geq y'_{j} - x'_k 
= 
\frac{\alpha }
{ q_j } + y_{j}
- \frac{\alpha }
{ p_k }- x_{k} 
\geq   y_{j}
-  x_{k}\geq 1-\delta.
\end{align*}
This proves \eqref{j26.3} for $n=k$ and all $i$.

Conditions \eqref{f3.12}, \eqref{f6.1}-\eqref{f6.3} and the assumption that $(k,j) \in\calD_-$ imply that
 $|x_n - y_j|\geq 1-\delta$ if and only if 
$n\leq k$. It follows from  \eqref{j26.5}, \eqref{j23.1},  \eqref{j22.1}-\eqref{j26.8} and \eqref{j26.6} that if $n\leq k$ then
\begin{align*}
y'_{j} - x'_n \geq y'_{j} - x'_k
= 
\frac{\alpha }
{ q_j } + y_{j}
-  \frac{\alpha }
{ p_k } - x_{k} 
\geq   y_{j}
-  x_{k}\geq 1-\delta.
\end{align*}
This completes the proof of \eqref{j26.3} for $i=j$ and all $n$. Hence, \eqref{f3.1} and \eqref{o1.1}-\eqref{o2.1} hold true.

Recall that
at least one of the inequalities \eqref{j23.1}-\eqref{f6.7} is the equality. 
We list all possible cases below.

If $x'_k={\alpha }/
{ p_k }+ x_{k} = \delta$ then we must have $y'_j=1$ in view of \eqref{j26.3}. This implies that $\P(C'_{k,j}\cap  (A')^c) =0$.

If $x'_k={\alpha }/
{ p_k }+ x_{k} = x_{k+1}<\delta$ then $x'_k = x'_{k+1}$.

If $y'_j={\alpha }/{ q_j } + y_{j} = 1$ then  $\P(C'_{k,j}\cap  (A')^c) =0$.

If $y'_j={\alpha }/{ q_j } + y_{j} =y_{j+1}< 1$ then  $y'_j = y'_{j+1}$.

If $\alpha = \P(C_{k,j}\cap A^c)$ then $\P(C'_{k,j}\cap  (A')^c) =0$.

These observations and \eqref{o4.5} allow us to conclude that  $x'_k = x'_{k+1}$ or  $y'_j = y'_{j+1}$ or $\P(C'_{k,j})>\P(C'_{k,j}\cap  (A')^c) =0$.

A completely analogous argument shows that if \eqref{j26.5} does not hold then we can transform $\bS$ into $\bS'$ such that \eqref{f3.1}-\eqref{o2.1} are true and we have the alternative: $x'_k = x'_{k-1}$ or  $y'_j = y'_{j-1}$ or $\P(C'_{k,j})>\P(C'_{k,j}\cap  A') =0$.

We next replace the assumption that $(k,j) \in\calD_-$ with $(k,j) \in\calD_+$. Once again, we can apply the analogous argument to conclude that 
we can construct $\bS'$ satisfying  \eqref{f3.1}-\eqref{o2.1} and such that 
$x'_k = x'_{k+1}$ or  $y'_j = y'_{j+1}$ or $\P(C'_{k,j})>\P(C'_{k,j}\cap  (A')^c) =0$ or $x'_k = x'_{k-1}$ or  $y'_j = y'_{j-1}$ or $\P(C'_{k,j})>\P(C'_{k,j}\cap  A') =0$.

We see that if $(k,j) \in\calD_- \cup \calD_+$ then
\begin{align}\label{f6.14}
x'_k = x'_{k+1} \quad \text{  or  }\quad y'_j = y'_{j+1} \quad \text{  or  }\quad x'_k = x'_{k-1} \quad \text{  or  }\quad y'_j = y'_{j-1}
\end{align}
or
\begin{align}\label{f6.15}
\P(C'_{k,j})>\P(C'_{k,j}\cap  (A')^c) =0 \quad \text{  or  }\quad \P(C'_{k,j})>\P(C'_{k,j}\cap  A') =0.
\end{align}
Let $\calT_3(k,j)$ be the transformation defined in this step.

\medskip
\emph{Step 2}.
Let $\prec$ denote an arbitrary order for the set of pairs $(k,j)$.
Let $\calT_3$ be defined as $\calT_3(k,j)$ applied to the first pair $(k,j)$ (in the sense of the order $\prec$) such that \eqref{s29.7} does not hold.

Let $\calT_4$ denote the transformation defined in Lemma \ref{s29.2}.

Let $\bS_0 = \bS$ and $\bS_k = \calT_3(\calT_4(\bS_{k-1}))$ for $k\geq 1$.
 The transformation $\calT_4$ strictly decreases $m(\calG)$ or $m(\calH)$, unless it acts as the identity transformation. Hence, for some $n_1$ and all $k\geq n_1$, $\bS_k = \calT_3(\bS_{k-1})$. This implies that for $k\geq n_1$, $\bS_k$ satisfies \eqref{f3.12}, and this in turn implies that \eqref{f6.14} cannot be true for $\bS_k$.

Since none of the conditions in \eqref{f6.14} can hold, one of the conditions in \eqref{f6.15} must be true for $\bS_k$, $k\geq n_1$. Note that $\calT_3(k,j)$ does not change $C_{n,i} \cap A$ or $C_{n,i} \cap A^c$ unless $(n,i) = (k,j)$.
Hence, for some $n_2 \geq n_1$ and all $k\geq n_2$, $\bS_k = \calT_3(\bS_{n_2})$. We let $\bS''=\bS_{n_2}$. 

Conditions \eqref{f3.1}-\eqref{o2.1} hold for the transformation defined in this proof because they hold for $\calT_3$ and $\calT_4$. 
\end{proof}

Consider four cells that lie at the corners of a rectangle, i.e., 
cells $C_{k_1,j_1}, C_{k_2,j_2} , C_{k_1,j_2}$ and $  C_{k_2,j_1}$ for some $j_1,j_2,k_1$ and $k_2$. The  transformation defined in the next lemma moves as much as possible of $A$ from the  upper left  corner to the  lower left corner, and compensates by moving the same amount of $A$ from the lower right corner to the upper right corner. 
The result of the transformation is that one of the four cells will not hold any $A$.

\begin{lemma}\label{s30.5}
Suppose that for some $\bS$, 
$1\leq k_1 < k_2 \leq m(\calG)$ and $1 \leq j_2 < j_1 \leq m(\calH)$ we have
\begin{align}
&p := \min(
\P(A \cap C_{k_1,j_1} ) , \P(A \cap C_{k_2,j_2} )) >0.\label{f7.11}
\end{align}
Assume that
\begin{align}\label{f7.2}
C_{k_1,j_1} \cup C_{k_2,j_2} &\cup C_{k_1,j_2} \cup  C_{k_2,j_1}
\subset B, \quad \text{  or  }\\
\label{f7.3}
C_{k_1,j_1} \cup C_{k_1,j_2} 
\subset B \quad &\text{  and  } \quad
 C_{k_2,j_1} \cup  C_{k_2,j_2}
\subset B^c, \quad \text{  or  }\\
C_{k_1,j_1} \cup C_{k_1,j_2} 
\subset B^c \quad &\text{  and  } \quad
 C_{k_2,j_1} \cup  C_{k_2,j_2}
\subset B, \quad \text{  or  }\label{f8.1}\\
C_{k_1,j_1} \cup C_{k_2,j_1} 
\subset B \quad &\text{  and  } \quad
 C_{k_1,j_2} \cup  C_{k_2,j_2}
\subset B^c, \quad \text{  or  }\label{f8.2}\\
C_{k_1,j_1} \cup C_{k_2,j_1} 
\subset B^c \quad &\text{  and  } \quad
 C_{k_1,j_2} \cup  C_{k_2,j_2}
\subset B.\label{f8.3}
\end{align}

There exists $\bS'$ such that $m'(\calG') = m(\calG)$, $m'(\calH') = m(\calH)$,
\begin{align}
\P(C'_{k_1,j_1})&= \P(C_{k_1,j_1}) -p,\label{s30.1}\\
\P(C'_{k_2,j_2})&= \P(C_{k_2,j_2}) -p,\label{s30.2}\\
\P(C'_{k_1,j_2})&= \P(C_{k_1,j_2}) +p,\label{s30.3}\\
\P(C'_{k_2,j_1})&= \P(C_{k_2,j_1}) +p,\label{s30.4}
\end{align}
and $\P(C'_{k,j}) = \P(C_{k,j})$
for all other values of $(k,j)$. 

Moreover,  \eqref{o1.1}-\eqref{o2.1} hold.
Condition \eqref{f3.1} holds with equality.
\end{lemma}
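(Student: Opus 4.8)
The plan is to build $\bS'$ by hand on a fresh atomic space that keeps the column/row grid of $\bS$ intact and merely shifts a piece of $A$ of mass $p$. Represent each cell $C'_{k,j}$ by two atoms, one forming $C'_{k,j}\cap A'$ and one forming $C'_{k,j}\cap (A')^c$. Set every cell's $A$-mass and $A^c$-mass equal to those in $\bS$, except that I move mass $p$ of $A$ out of $C_{k_1,j_1}$ into $C_{k_1,j_2}$ and mass $p$ of $A$ out of $C_{k_2,j_2}$ into $C_{k_2,j_1}$, leaving all four $A^c$-parts untouched. The two decreases are legitimate precisely because $p\le \P(A\cap C_{k_1,j_1})$ and $p\le \P(A\cap C_{k_2,j_2})$, which is the content of \eqref{f7.11}. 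Since the $A^c$-masses of the corner cells do not change, the cell totals change exactly as in \eqref{s30.1}--\eqref{s30.4} and no other cell is affected; also $m'(\calG')=m(\calG)$ and $m'(\calH')=m(\calH)$, as the number of columns and rows is untouched.

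The crux is that this shift is invisible to the marginal predictions, so I would next verify that $x'_k=x_k$ and $y'_j=y_j$ for every $k$ and $j$. The only affected columns are $k_1$ and $k_2$, and in each the two changes are $-p$ and $+p$ in both the $A$-mass and the total mass, so $\P(A\cap G'_k)$ and $\P(G'_k)$ are both unchanged; the identical cancellation holds row-by-row for $j_1$ and $j_2$. Hence every $x_k=\P(A\mid G_k)$ and $y_j=\P(A\mid H_j)$ is preserved. Because the column and row totals are preserved and were positive, the generating partitions keep their sizes, and, most importantly, whether a given cell lies in $B$ depends only on the constant values $x_k,y_j$ it carries; thus $B'$ and $B$ consist of exactly the same cells.

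It then remains to control how $\P(B)$ is split between the two corner regions of $B$. The key structural fact, which uses $\delta<1/2$, is that a cell of $B$ lies in the upper-left part iff $x_k\le\delta$ (equivalently $y_j\ge 1-\delta$) and in the lower-right part iff $x_k\ge 1-\delta$ (equivalently $y_j\le\delta$); since $\delta<1-\delta$ these alternatives are mutually exclusive, and no $B$-cell has $x_k$ strictly between them, so any two cells of $B$ sharing a row or a column lie in the same part. I then run through the five hypotheses: in \eqref{f7.3} and \eqref{f8.1} the two in-$B$ corners share column $k_1$ or $k_2$; in \eqref{f8.2} and \eqref{f8.3} they share row $j_1$ or $j_2$; and in \eqref{f7.2}, where all four corners lie in $B$, the same-part property propagates around the rectangle (along a shared column, then along a shared row) to force all four corners into one part. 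In every case the two increments $+p$ and the relevant decrement $-p$ occurring inside $B$ happen within a single part, so the $\P$-mass of each part is conserved exactly. This gives both $\P(B')=\P(B)$, i.e. \eqref{f3.1} with equality, and the preservation of positivity \eqref{o1.1}--\eqref{o2.1}.

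I expect the genuine obstacle to be exactly this last bookkeeping: confirming that the four localized mass shifts never transfer weight from one corner region of $B$ to the other. This is where the hypotheses \eqref{f7.2}--\eqref{f8.3} are used essentially, since they are precisely the corner-configurations compatible with the same-part conclusion; the diagonal case \eqref{f7.2} is the most delicate, because one must rule out a split in which $C_{k_1,j_1}$ lies in the upper-left part while $C_{k_2,j_2}$ lies in the lower-right part. By contrast, the explicit construction, the feasibility supplied by \eqref{f7.11}, and the invariance of $x_k$ and $y_j$ are routine.
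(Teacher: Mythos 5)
Your proposal is correct and follows essentially the same route as the paper: the authors likewise move a piece of $A$ of mass $p$ from $C_{k_1,j_1}$ to $C_{k_1,j_2}$ and from $C_{k_2,j_2}$ to $C_{k_2,j_1}$ (refining $\calF$ via Lemma \ref{s29.5} rather than rebuilding an atomic space), observe that all $p_k,q_j,x_k,y_j$ are unchanged so $B'$ consists of the same cells as $B$, and then check the corner bookkeeping case by case. Your explicit same-corner argument using $\delta<1/2$, including the propagation around the rectangle in case \eqref{f7.2}, is exactly the point the paper treats in detail only for \eqref{f7.3} and leaves to the reader for the remaining cases.
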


\begin{proof}
We can make $\calF$ finer, if necessary (see Lemma \ref{s29.5}), so that there exist sets $A_1 \subset A \cap C_{k_1,j_1} $ and $A_2 \subset A \cap C_{k_2,j_2}$ such that
$\P(A_1) = \P(A_2) = p$. Let 
\begin{align*}
C'_{k_1,j_1}&= C_{k_1,j_1} \setminus A_1,\qquad
C'_{k_2,j_2}= C_{k_2,j_2} \setminus A_2,\\
C'_{k_1,j_2}&= C_{k_1,j_2} \cup A_1,\qquad
C'_{k_2,j_1}= C_{k_2,j_1} \cup A_2.
\end{align*}
For all other values of $(k,j)$, we let $C'_{k,j} = C_{k,j}$. These transformations redefine $G_k$'s, $H_j$'s, $\calG$ and $\calH$. Other elements of $\bS$ are unaffected.

It is clear that \eqref{s30.1}-\eqref{s30.4} are satisfied and $\P(C'_{k,j}) = \P(C_{k,j})$
for all other values of $(k,j)$.

It is easy to check that $p'_k = p_k$, $q'_k = q_k$, $x'_k = x_k$ and $y'_k=y_k$ for all $k$. Therefore, $C'_{k,j} \subset B'$ if and only if $C_{k,j} \subset B$, for every $(k,j)$.

Suppose that \eqref{f7.3} is true. Then
\begin{align}
\P(C'_{k_1,j_1} \cap B') + 
\P( C'_{k_1,j_2} \cap B')
&= \P(C'_{k_1,j_1} ) + 
\P( C'_{k_1,j_2} )
= \P(C_{k_1,j_1} )-p + 
\P( C_{k_1,j_2} )+p \notag \\
&= \P(C_{k_1,j_1} ) + 
\P( C_{k_1,j_2} )
=\P(C_{k_1,j_1} \cap B) + 
\P( C_{k_1,j_2} \cap B),\label{o3.8}
\end{align}
and
\begin{align*}
\P(C'_{k_2,j_1} \cap B') + 
\P( C'_{k_2,j_2} \cap B')
&= 0
=\P(C_{k_2,j_1} \cap B) + 
\P( C_{k_2,j_2} \cap B).
\end{align*}
These formulas and the fact that $\P(C'_{k,j}) = \P(C_{k,j})$
for all other values of $(k,j)$ imply that \eqref{f3.1} holds with equality. 

It follows from \eqref{f7.3} that either $j_1,j_2 \leq m_-(\calH)$ or $j_1,j_2 \geq m_+(\calH)$. This and \eqref{o3.8} imply that \eqref{o1.1}-\eqref{o2.1} holds.

One can prove that \eqref{o1.1}-\eqref{o2.1} holds and \eqref{f3.1} holds with equality under any of the assumptions \eqref{f7.2}-\eqref{f8.3} in a similar manner. 

The condition \eqref{s28.6} obviously holds.
\end{proof}

\begin{notation}
We will use $\tnw((k_1,j_1),(k_2,j_2))$ to denote 
the transformation defined in Lemma \ref{s30.5}.
We define a transformation $\tnwc((k_1,j_1),(k_2,j_2))$ by replacing
 $A$ with $A^c$ in the assumption \eqref{f7.11} and the construction of $\tnw((k_1,j_1),(k_2,j_2))$.

If  the assumptions of Lemma \ref{s30.5} do not hold then $\tnw((k_1,j_1),(k_2,j_2))$ will denote the identity transformation, and similarly for $\tnwc((k_1,j_1),(k_2,j_2))$.
\end{notation}

The transformation defined in the next lemma converts the top right family of cells in  Fig. \ref{fig1} into subsets of $A$. It also converts the bottom left family of cells in  Fig. \ref{fig1} into subsets of $A^c$.

\begin{lemma}\label{s30.10}
Given $\bS$, let
\begin{align}\label{f14.3}
A' &=\left( A\cup 
\bigcup_{m_-(\calG) < k \leq m(\calG), 
m_-(\calH) < j \leq m(\calH)} C_{k,j} \right)
\setminus 
\left(\bigcup_{1 \leq k < m_+(\calG), 
1 \leq j < m_+(\calH)} C_{k,j}\right).
\end{align}
Let all other elements of $\bS'$ be the same as those of $\bS$.
Then conditions \eqref{f3.1}-\eqref{o2.1} hold.
\end{lemma}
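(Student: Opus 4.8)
The plan is to exploit that this transformation alters only the event $A$: since $\Omega'=\Omega$, $\calF'=\calF$, $\calG'=\calG$, $\calH'=\calH$ and $\P$ is unchanged, the cells $C_{k,j}$ and all the probabilities $\P(C_{k,j})$ stay exactly the same, so $m'(\calG')=m(\calG)$ and $m'(\calH')=m(\calH)$ and \eqref{s28.6} is immediate. Everything then reduces to understanding how $x'_k=\P(A'\mid G_k)$ and $y'_j=\P(A'\mid H_j)$ move relative to $x_k,y_j$, and to checking that no cell of $B$ leaves $B'$.

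First I would tabulate $A'\cap C_{k,j}$ cell by cell. Writing $L$ for the lower-left block $\{k<m_+(\calG),\ j<m_+(\calH)\}$ and $U$ for the upper-right block $\{k>m_-(\calG),\ j>m_-(\calH)\}$, the intersection $A'\cap C_{k,j}$ is empty on $L$, is all of $C_{k,j}$ on $U\setminus L$, and equals $A\cap C_{k,j}$ elsewhere. Summing over a fixed column or row then yields the monotonicity I need, using $m_-(\calG)<m_+(\calG)$ and $m_-(\calH)<m_+(\calH)$ (these hold because $\delta<1/2$ forces a column in the upper-left corner to have $x_k\le\delta$ and one in the lower-right corner to have $x_k\ge 1-\delta$, so the two ranges are disjoint; the conventions $\max\emptyset=0$, $\min\emptyset=\infty$ cover degenerate corners). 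Concretely, for a left column $k\le m_-(\calG)$ only the top cells $j\ge m_+(\calH)$ retain any $A$-mass, so $x'_k\le x_k$; for a right column $k\ge m_+(\calG)$ the cells $j>m_-(\calH)$ become full, so $x'_k\ge x_k$; and symmetrically $y'_j\ge y_j$ for a top row $j\ge m_+(\calH)$ and $y'_j\le y_j$ for a bottom row $j\le m_-(\calH)$.

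With this in hand, the two families of cells comprising $B$ survive. An upper-left cell $C_{k,j}\subseteq B$ has $y_j-x_k\ge 1-\delta$, which forces $k\le m_-(\calG)$ and $j\ge m_+(\calH)$; then $y'_j-x'_k\ge y_j-x_k\ge 1-\delta$, so $C_{k,j}\subseteq B'$. A lower-right cell has $x_k-y_j\ge 1-\delta$ with $k\ge m_+(\calG)$ and $j\le m_-(\calH)$, so $x'_k-y'_j\ge x_k-y_j\ge 1-\delta$ and again $C_{k,j}\subseteq B'$. Hence $B\subseteq B'$, and since cell probabilities are unchanged, $\P(B')\ge\P(B)$, which is \eqref{f3.1}. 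For \eqref{o1.1}-\eqref{o2.1}, a positive upper-left sum in \eqref{o1.1} supplies an upper-left cell $C_{k,j}\subseteq B$ of positive probability; it still satisfies $y'_j-x'_k\ge 1-\delta$, so after re-sorting its column lies among the left columns of $\bS'$ (index $\le m'_-(\calG')$) and it contributes its unchanged positive probability to the upper-left sum in \eqref{o2.1}. The lower-right sum is handled identically, so \eqref{o1.1} implies \eqref{o2.1}.

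The only genuinely delicate point is the re-sorting. Because $A$ changes, a formerly middle column ($m_-(\calG)<k<m_+(\calG)$) can acquire $x'_k\le\delta$ or $x'_k\ge 1-\delta$ and migrate into the left or right group of $\bS'$, shifting the labels $m'_\pm$. I expect this to be harmless: such migrations can only insert new cells into $B'$ and never remove the preserved corner cells, which retain small $x'$ and large $y'$ (respectively large $x'$ and small $y'$) and hence stay on the correct side after sorting. Since no cell of $B$ is ever lost, this is exactly what \eqref{f3.1}-\eqref{o2.1} require.
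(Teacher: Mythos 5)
Your proof is correct and takes essentially the same route as the paper's: both establish the four monotonicity relations $x'_k\leq x_k$ for $k\leq m_-(\calG)$, $x'_k\geq x_k$ for $k\geq m_+(\calG)$, and the symmetric ones for $y'_j$, and then deduce $B\subset B'$, from which \eqref{f3.1} and \eqref{o1.1}--\eqref{o2.1} follow since cell probabilities are unchanged. Your cell-by-cell tabulation of $A'\cap C_{k,j}$ and the remark about re-sorting the columns merely spell out details that the paper's two-line argument leaves implicit.
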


\begin{proof}
The condition \eqref{s28.6} clearly holds.

Note that 
\begin{align*}
x'_k & \leq x_k, \quad \text{  for  } 1 \leq k \leq m_-(\calG),\\
y'_k & \leq y_k, \quad \text{  for  } 1 \leq k \leq m_-(\calH),\\
x'_k & \geq x_k, \quad \text{  for  } m_+(\calG) \leq k \leq m(\calG),\\
y'_k & \geq y_k, \quad \text{  for  } m_+(\calH) \leq k \leq m(\calH).
\end{align*}
These observations and \eqref{f7.4}-\eqref{f7.7} imply that $B\subset B'$ and, therefore,
\eqref{f3.1} and \eqref{o1.1}-\eqref{o2.1} hold.
\end{proof}

The transformation defined in the next lemma empties all cells in the ``bottom right corner'' of the ``upper left corner'' (marked with the green color in  Fig. \ref{fig1}), and similarly on the other side of the diagonal.
The result of the transformations described
in Lemmas \ref{s30.10}-\ref{s30.11}  is that large regions in Fig. \ref{fig1} do not have cells that would hold both $A$ and $A^c$.

\begin{lemma}\label{s30.11}
Consider an $\bS$ such that \eqref{o1.1} holds. Then there exists $\bS'$ such that conditions \eqref{f3.1}-\eqref{o2.1}, \eqref{f6.8}-\eqref{f3.12}  and \eqref{s29.4} hold, and
\begin{align}\label{f9.2}
\P((G'_k \setminus B') \cap A') &=0 \quad \text{  for  } \quad 1\leq k \leq m_-(\calG'),\\
\P((H'_k \setminus B') \cap A') &=0 \quad \text{  for  } \quad 1\leq k \leq m_-(\calH'),\label{f9.3}\\
\P((G'_k \setminus B') \cap (A')^c) &=0 \quad \text{  for  } \quad m_+(\calG) \leq k \leq m(\calG),\label{f9.4}\\
\P((H'_k \setminus B') \cap (A')^c) &=0\quad \text{  for  } \quad m_+(\calH) \leq k \leq m(\calH).\label{f9.5}
\end{align}
\end{lemma}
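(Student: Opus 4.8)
The plan is to keep the structural skeleton of $\bS$ rigid and only shuffle the mass of $A$ and of $A^c$ among the cells. First I would invoke Lemma \ref{o3.7}, so that without loss of generality $\bS$ already satisfies \eqref{f6.8}-\eqref{f3.12}, every internal–border cell has positive probability, and (since these conditions are inherited through \eqref{o1.1}-\eqref{o2.1}) the hypothesis \eqref{o1.1} still holds. The payoff of \eqref{f6.8}-\eqref{f3.12} is that $B$, the four thresholds $m_\pm(\calG),m_\pm(\calH)$ and the entire zigzag boundary are determined by the numbers $x_k,y_j$ alone. Consequently each corner move $\tnw((k_1,j_1),(k_2,j_2))$ and $\tnwc((k_1,j_1),(k_2,j_2))$ of Lemma \ref{s30.5} leaves $B$, the thresholds and all of \eqref{f6.8}-\eqref{f3.12} intact, satisfies \eqref{f3.1} with equality, and propagates \eqref{o1.1}-\eqref{o2.1}. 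Thus no structural hypothesis can be lost while mass is redistributed, and the whole task is reduced to scheduling admissible corner moves that force the four emptying identities \eqref{f9.2}-\eqref{f9.5}.

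To secure \eqref{f9.2} I would sweep the upper-left columns, repairing them one offending cell at a time. Fix $k\le m_-(\calG)$ and a non-$B$ cell $C_{k,j}$ that still carries $A$; pick a row $j_1>j$ with $C_{k,j_1}\subset B$ (one exists because in the upper-left block every column terminates in a run of $B$-cells) and apply $\tnw((k-1,j_1),(k,j))$. By \eqref{f6.1}-\eqref{f6.6} and $x_{k-1}<x_k$ the whole row $j_1$ lies in $B$, so this is the admissible row-split pattern \eqref{f8.2} provided $C_{k-1,j}\subset B^c$; the move then carries the $A$ out of $C_{k,j}$ and deposits the same amount in $C_{k-1,j}$, in the \emph{same} row but one column to the left. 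Hence each move pushes the misplaced $A$ leftward along its row, and the monovariant $\sum_{k\le m_-(\calG)}k\,\P((G_k\setminus B)\cap A)$ strictly decreases. Since $j\ge m_+(\calH)$ forces $C_{1,j}\subset B$, the offending mass is eventually driven toward the left edge, where its row enters $B$. The conditions \eqref{f9.4} on the lower-right columns come from the mirror-image $\tnw$ moves, and \eqref{f9.3}, \eqref{f9.5} from the same argument run on $A^c$ with $\tnwc$ and with the roles of $\calG$ and $\calH$ interchanged.

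The step I expect to be the crux is \emph{crossing the $B/B^c$ interface}. Once the misplaced $A$ reaches the last column in which its row is still outside $B$, say $C_{k,j}\subset B^c$ but $C_{k-1,j}\subset B$, the four cells $C_{k-1,j_1},C_{k,j_1},C_{k-1,j},C_{k,j}$ exhibit three cells in $B$ and one in $B^c$. This is \emph{not} one of the admissible patterns \eqref{f7.2}-\eqref{f8.3}, and indeed no single corner move can push $A$ across the interface: $\tnw$ and $\tnwc$ conserve all marginals and even conserve $\P(A\cap B)$, so the trapped mass cannot be absorbed by rearrangement alone. The remedy I would adopt is to stop feeding $C_{k,j}$ and instead empty it: having already removed its $A$, I evacuate its remaining $A^c$ by a $\tnwc$ move so that $\P(C_{k,j})=0$, after which the amalgamation transformations of Lemmas \ref{s29.3} and \ref{s29.2} collapse a redundant generator, restore \eqref{f6.8}-\eqref{f3.12}, and strictly lower $m(\calG)+m(\calH)$. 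Because $m(\calG)+m(\calH)$ cannot decrease indefinitely, alternating the leftward pushes with these collapses terminates, and at termination all four families \eqref{f9.2}-\eqref{f9.5} hold.

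Two verifications then finish the argument. Since the corner moves may alter the contents of the internal border, I would route every push so that $A$ is deposited only into cells of $\calD_-$, which the analysis in Lemma \ref{s29.8} forces to be pure $A$, and $A^c$ only into cells of $\calD_+$; a final application of the transformation of Lemma \ref{s29.8} restores \eqref{s29.4} without creating mass outside $B$, because it only converts $A^c$ to $A$ inside border cells that already lie in $B$. One also checks that each invoked rectangle genuinely realizes the relevant case of \eqref{f7.2}-\eqref{f8.3}; this is precisely where the strict monotonicity \eqref{f3.12} and the zigzag relations \eqref{f6.1}-\eqref{f6.6} enter, guaranteeing that a full row above the staircase lies in $B$ while the row being cleared lies in $B^c$. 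Granting these checks, the final $\bS'$ satisfies \eqref{f3.1}-\eqref{o2.1}, \eqref{f6.8}-\eqref{f3.12}, \eqref{s29.4} and \eqref{f9.2}-\eqref{f9.5}, as required.
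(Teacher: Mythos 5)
There is a genuine gap, and it is structural rather than a missing detail. Your plan clears the non-$B$ cells of columns $k\leq m_-(\calG)$ using only the corner moves $\tnw$ and $\tnwc$ of Lemma \ref{s30.5} (plus generator collapses). But, as you yourself observe, those moves preserve every marginal $\P(G_k\cap A)=x_kp_k$ and $\P(H_j\cap A)=y_jq_j$. Condition \eqref{f9.2} demands that \emph{all} of the $A$-mass of column $k$ end up inside the $B$-cells of that column, whose total probability $\sum_{j:\,C_{k,j}\subset B}\P(C_{k,j})$ may be far smaller than $x_kp_k$; no marginal-preserving rearrangement can overcome this capacity obstruction. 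Concretely, for a cell $C_{k,j}$ with $k\leq m_-(\calG)$ and $j<m_+(\calH)$ (the bottom-left block), the entire row $j$ restricted to columns $1,\dots,m_-(\calG)$ lies in $B^c$, so your leftward pushes along row $j$ never reach $B$ at all. Your fallback for "crossing the interface" also does not close the gap: you propose to evacuate the remaining $A^c$ from the stuck cell and then collapse a generator, but the cell still holds the $A$ that could not be moved (so it cannot be made empty), and Lemma \ref{s29.3} only merges generators when an empty cell \emph{inside} $B$ abuts $B^c$, not when a $B^c$ cell is emptied.

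The paper's proof uses two operations that are deliberately \emph{not} marginal-preserving, and this is exactly what your argument is missing. First, Lemma \ref{s30.10} (which you never invoke) simply redefines $A$: it adjoins the whole top-right block to $A$ and deletes the whole bottom-left block from $A$; this lowers $x_k$ for $k\leq m_-(\calG)$ and raises it for $k\geq m_+(\calG)$ (and similarly for the $y_j$), so $B\subset B'$ and \eqref{f3.1} holds, while the bottom-left block becomes pure $A^c$. Second, Step 1 of the proof of Lemma \ref{s30.11} takes each remaining $B^c$ cell $C_{k,j}$ with $k\leq m_-(\calG)$, $j\geq m_+(\calH)$ and relocates its entire probability mass either to $C_{k,1}$ (removing it from $A$) or to $C_{m(\calG),j}$ (adding it to $A$), according to whether $\P(A\mid C_{k,j})<1-\delta$; this changes $p_k,q_j,x_k,y_j$ but only in directions that enlarge $B$, and it leaves those cells literally empty. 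The four conditions \eqref{f9.2}--\eqref{f9.5} then hold because every non-$B$ cell of a relevant column or row is either in a purified corner block or has been emptied; the structural conditions are restored by iterating with the transformations of Lemmas \ref{s29.2} and \ref{s29.8}, with termination controlled by monotone quantities. The corner moves $\tnw,\tnwc$ are reserved in the paper for the later combinatorial case analysis of Lemma \ref{o3.10}; they cannot, on their own, produce the conclusion of Lemma \ref{s30.11}.
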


\begin{proof}
\emph{Step 1}.
It follows from \eqref{o1.1} that $ m_-(\calG)\geq 1$, $ m_+(\calH)\leq m(\calH)$, $ m_-(\calH)\geq 1$, and $ m_+(\calG)\leq m(\calG)$.
Consider $k$ and $j$ such that $k \leq m_-(\calG)$, $j\geq m_+(\calH) $ and  $C_{k,j} \subset B^c$. Let $C'_{k,j} = \emptyset$.

If $\P(A\mid C_{k,j}) < 1-\delta$ then let $ A' = A \setminus C_{k,j}$ and $C'_{k,1} = C_{k,1} \cup C_{k,j}$.

If $\P(A\mid C_{k,j}) \geq 1-\delta$ then let $ A' = A \cup C_{k,j}$ and $C'_{m(\calG),j} = C_{m(\calG),j} \cup C_{k,j}$.

These changes will affect $\calG$ and $\calH$. Other elements of $\bS$ will be unchanged.

It is easy to check that $x'_k \leq x_k$, $x'_{m(\calG)} \geq x_{m(\calG)}$, $y'_1 \leq y_1$  and $y'_j \geq y_j$. All other $x_i$'s and $y_i$'s will be unaffected.
For all $i$ and $n$ such that $C_{i,n}\subset B$, $\P(C'_{i,n}) = \P(C_{i,n})$.
Hence, in view of \eqref{f7.4}-\eqref{f7.7}, we see that
\eqref{f3.1}-\eqref{o2.1} hold.
We also have
\begin{align}\label{o6.1}
A \cap 
\bigcup_{m_-(\calG) < k \leq m(\calG), 
m_-(\calH) < j \leq m(\calH)} C_{k,j} 
&\subset 
A' \cap 
\bigcup_{m'_-(\calG') < k \leq m'(\calG'), 
m'_-(\calH') < j \leq m'(\calH')} C'_{k,j} ,\\
A' \cap \bigcup_{1 \leq k < m'_+(\calG'), 
1 \leq j < m'_+(\calH')} C'_{k,j}
&\subset
A \cap \bigcup_{1 \leq k < m_+(\calG), 
1 \leq j < m_+(\calH)} C_{k,j}.\label{o6.2}
\end{align}

We repeat the transformation for all $k$ and $j$ such that $k \leq m_-(\calG)$, $j\geq m_+(\calH) $ and  $C_{k,j} \subset B^c$. The result  is that $C'_{k,j}=\emptyset$ for all such pairs $(k,j)$. Moreover, \eqref{f3.1}-\eqref{o2.1} and \eqref{o6.1}-\eqref{o6.2} hold.

We apply an analogous sequence of transformations corresponding to all pairs $(k,j)$ such that $k \geq m_+(\calG)$, $j\leq m_-(\calH) $ and  $C_{k,j} \subset B^c$. The result is, once again, that $C'_{k,j}=\emptyset$ for all such pairs $(k,j)$. The conditions \eqref{f3.1}-\eqref{o2.1} and \eqref{o6.1}-\eqref{o6.2} still hold. Note that $A$ and $A'$ do not exchange the roles in \eqref{o6.1}-\eqref{o6.2}, i.e., these conditions hold the way they are stated.

 We will denote the composition of all transformations defined in this step by $\calT_5$. 

For later reference, we record the following properties of $\bS' = \calT_5(\bS)$,
\begin{align}\label{f14.1}
C'_{k,j}&=\emptyset \quad\text{  if  } k \leq m_-(\calG),\  j\geq m_+(\calH) \text{  and } C'_{k,j} \subset B^c,\\
C'_{k,j}&=\emptyset \quad\text{  if  } k \geq m_+(\calG),\  j\leq m_-(\calH) \text{  and  }  C'_{k,j} \subset B^c. \label{f14.2}
\end{align}

\medskip
\emph{Step 2}.
Let $\calT_6, \calT_7$ and $\calT_8$ denote the transformations defined in Lemmas \ref{s30.10}, \ref{s29.8} and \ref{s29.2}. 
Let $\bS_0=\bS$ and $\bS_k = \calT_5(\calT_6(\calT_7(\calT_8(\bS_{k-1}))))$ for $k\geq 1$. In view of \eqref{o4.3} and the fact that $\calT_5, \calT_6$ and $\calT_7$ have the property \eqref{s28.6}, $\calT_8$ must act as an identity eventually, i.e., there must exist $n_1$ such that $\bS_k = \calT_5(\calT_6(\calT_7(\bS_{k-1})))$ for $k\geq n_1$.

Transformations $\calT_5$ and $\calT_6$ do not change $m(\calG)$ and $m(\calH)$. They also do not change the values of $\P(C_{k,j})$ and $\P(C_{j,k}\cap A)$ for $(k,j) \in \calD_- \cap \calD_+$. It follows that $\calT_7$ must act as an identity eventually, i.e., there must exist $n_2\geq n_1$ such that $\bS_k = \calT_5(\calT_6(\bS_{k-1}))$ for $k\geq n_2$.

It follows from \eqref{f14.3} and \eqref{o6.1}-\eqref{o6.2} that  there must exist $n_3\geq n_2$ such that $\bS_k = \bS_{k-1}$ for $k\geq n_3$.
Let $\bS'= \bS_{n_3}$. Since $\calT_i(\bS') = \bS'$ for $i=5,6,7,8$, the conditions  \eqref{f3.1}-\eqref{o2.1}, \eqref{f6.8}-\eqref{f3.12}, \eqref{s29.4} and \eqref{f9.2}-\eqref{f9.5} are satisfied.  
\end{proof}

\begin{lemma}\label{o3.10}
If $\P(B)>0$ and $\eps>0$ then there exists $\bS'$ satisfying 

(i) $\P(B') > \P(B) - \eps$,

(ii) $m_-(\calG) = 1$, or  $m_-(\calG) =2$ and   $\P(C_{1,m(\calH)})=0$, 

(iii) $m_-(\calH) = 1$, or  $m_-(\calH) =2$ and   $\P(C_{m(\calG),1})=0$.
\end{lemma}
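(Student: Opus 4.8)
The plan is to take an arbitrary $\bS$ with $\P(B)>0$, normalise it to the ``zigzag'' canonical form already produced by the preceding lemmas, and then repeatedly thin the upper-left corner of $B$ until it is at most two columns wide (treating the lower-right corner symmetrically for (iii)). First I would apply Lemma~\ref{o1.2} to pass, at the cost of at most $\eps$ in $\P(B)$, to a configuration satisfying \eqref{o1.1}; this single step accounts for the $\eps$ in (i), since every transformation used afterwards preserves \eqref{f3.1}--\eqref{o2.1} and therefore never decreases $\P(B)$ nor lets the two corners of $B$ collapse to measure zero. Applying Lemma~\ref{s30.11} I may then assume \eqref{f6.8}--\eqref{f3.12}, \eqref{s29.4} and \eqref{f9.2}--\eqref{f9.5} all hold. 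In this form the boundary of $B$ is a staircase turning at every interior cell corner, every internal-border cell is nonempty and pure, and by \eqref{f6.8} the upper-left corner occupies exactly columns $1,\dots,m_-(\calG)$ and rows $m_+(\calH),\dots,m(\calH)$, a staircase whose width $m_-(\calG)$ equals its number of rows $m(\calH)-m_+(\calH)+1$.

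The engine of the thinning is the observation, recorded in the proof of Lemma~\ref{s30.5}, that $\tnw$ and $\tnwc$ leave \emph{every} $x_k$ and $y_k$ unchanged: they merely relocate mass among the four corner cells of a chosen rectangle, so they preserve the event $B$ and all the indices $m_\pm(\calG),m_\pm(\calH)$ \emph{exactly}, while driving the $A$-content (respectively the $A^c$-content) of one of the four cells to zero. Since the border cells are pure by \eqref{s29.4}, a pure-$A$ border cell can be made genuinely empty by a suitable application of $\tnw$, and a pure-$A^c$ border cell by $\tnwc$; an empty cell of $B$ adjacent to a cell of $B^c$ then feeds the collapse mechanism built in Lemma~\ref{s29.3} (and its row-analogue obtained by exchanging $\calG$ and $\calH$, exactly as in Lemma~\ref{s29.2}), which removes a generator without decreasing $\P(B)$. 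Re-applying Lemma~\ref{s29.2} restores the canonical zigzag on the smaller configuration.

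I would then organise these moves into a single loop, ``empty a border cell of the corner, collapse the resulting straight segment, re-canonicalise,'' exactly in the style of the fixed-point constructions in Lemmas~\ref{o3.7} and \ref{s30.11}. Each successful collapse strictly decreases $m(\calG)+m(\calH)$, which is bounded below, so the loop terminates at some $\bS'$ on which none of the constituent transformations act nontrivially. At such a fixed point I would argue that the staircase cannot have width $m_-(\calG)\ge 3$: with three or more steps present there is always a border cell admitting a rectangle that satisfies one of \eqref{f7.2}--\eqref{f8.3}, together with the abundance of $A$ placed to the upper right by Lemma~\ref{s30.10}, so a further emptying-and-collapse would be possible, contradicting the fixed-point property. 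The residual possibility is that the corner has been reduced to width $2$ but the extreme corner cell $C_{1,m(\calH)}$ has been emptied and cannot be absorbed further; this is precisely the alternative ``$m_-(\calG)=2$ and $\P(C_{1,m(\calH)})=0$'' in (ii), and running the mirror image of the whole loop on the lower-right corner (clearing $C_{m(\calG),1}$ and collapsing columns) yields (iii).

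The main obstacle I anticipate is not the termination but the combinatorial case analysis that certifies, at each stage where $m_-(\calG)\ge 3$, that a genuinely applicable rectangle exists: the $\tnw$/$\tnwc$ hypotheses require one of the four cells' $B/B^c$ patterns in \eqref{f7.2}--\eqref{f8.3}, and the cells nearest the extreme corner tend to produce the forbidden ``anti-diagonal'' pattern (opposite corners in $B$, the other two in $B^c$) for which no transformation is defined, while the corner columns carry no $A$ off $B$ by \eqref{f9.2}. Disentangling these patterns — keeping track of whether $C_{1,m(\calH)}\subset A$ or $C_{1,m(\calH)}\subset A^c$ and of which adjacent cells hold spare $A$ or $A^c$ — is the delicate point, and it is exactly this obstruction at the extreme corner that forces the theorem to allow the width-$2$-with-empty-corner outcome rather than always reaching $m_-(\calG)=1$.
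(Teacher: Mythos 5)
Your overall strategy — pass to the canonical zigzag form via Lemmas \ref{o1.2} and \ref{s30.11}, then run a terminating loop of $\tnw$/$\tnwc$ moves, emptyings, and collapses measured by the decrease of $m(\calG)+m(\calH)$ — is the same skeleton the paper uses. But the step you defer, ruling out $m_-(\calG)\ge 3$ at a fixed point, is precisely where the substance of the proof lies, and your sketch of it does not work as stated. The transformation $\tnw((k_1,j_1),(k_2,j_2))$ removes $p=\min(\P(A\cap C_{k_1,j_1}),\P(A\cap C_{k_2,j_2}))$ from \emph{each} of the two diagonal cells, so it empties the $A$-content of whichever cell holds \emph{less}; you cannot choose which. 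Hence "a genuinely applicable rectangle exists" does not yield "the designated border cell can be emptied," and your claim that a fixed point with $m_-(\calG)\ge3$ always admits a further emptying-and-collapse is unsupported. This is exactly why the paper must track both branches of each dichotomy (\eqref{f9.7}/\eqref{f9.8}, \eqref{f14.6}/\eqref{f14.7}, \eqref{f11.3}/\eqref{f11.4}), and why several branches terminate not in a collapse but in contradictions of a completely different character (e.g.\ $x_2=0$ contradicting $x_2>x_1\ge 0$ at \circled{J5}, or $x_{m_-(\calG)}=1$ contradicting \eqref{f7.4} at \circled{E}).

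Most importantly, the decisive contradiction for $m_-(\calG)\ge 4$ (node \circled{H8}) is not combinatorial at all: after establishing \eqref{f11.1} and \eqref{f11.2} one finds adjacent internal-border cells with $\P(C_{k,j}\cap A^c)=0$ and $\P(C_{k+1,j+1}\cap A)=0$, concentrates all of $G_k\cap A^c$ and all of $H_{j+1}\cap A$ into the single cell $C_{k,j+1}$ via \eqref{f11.5}--\eqref{f11.6}, and deduces $\P(A^c\mid C_{k,j+1})\ge 1-x_k\ge 1-\delta$ and $\P(A\mid C_{k,j+1})\ge y_{j+1}\ge 1-\delta$ simultaneously, which is impossible for $\delta<1/2$. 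This probabilistic argument, together with the separate treatment of $m_-(\calG)=3$, is the missing core of the proof; without it (or a substitute) your loop could stall at a fixed point of width $3$ or more, and the lemma would not follow. Your identification of the width-$2$ outcome with an empty corner cell $C_{1,m(\calH)}$ is correct and matches the paper's node \circled{I}.
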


\begin{proof}
Suppose that $\P(B)>0$ and $\eps>0$. We apply the transformation defined in Lemma \ref{o1.2} and obtain $\bS'$ satisfying $\P(B') > \P(B) - \eps$, $m'(\calG') \leq m(\calG)+1$,  $m'(\calH') \leq m(\calH)+1$,  and 
\eqref{o2.1}.

Next we apply the transformation defined in Lemma \ref{s30.11} and obtain 
$\bS''$ satisfying \eqref{f3.1}, \eqref{o2.1}, \eqref{f6.8}-\eqref{f3.12}, \eqref{s29.4}, \eqref{f9.2}-\eqref{f9.5}, $m''(\calG'') \leq m(\calG)+1$ and $m''(\calH'') \leq m(\calH)+1$.  
These properties of $\bS''$ are illustrated in Fig.~\ref{fig1}.

\begin{figure} \includegraphics[width=1.0\linewidth]{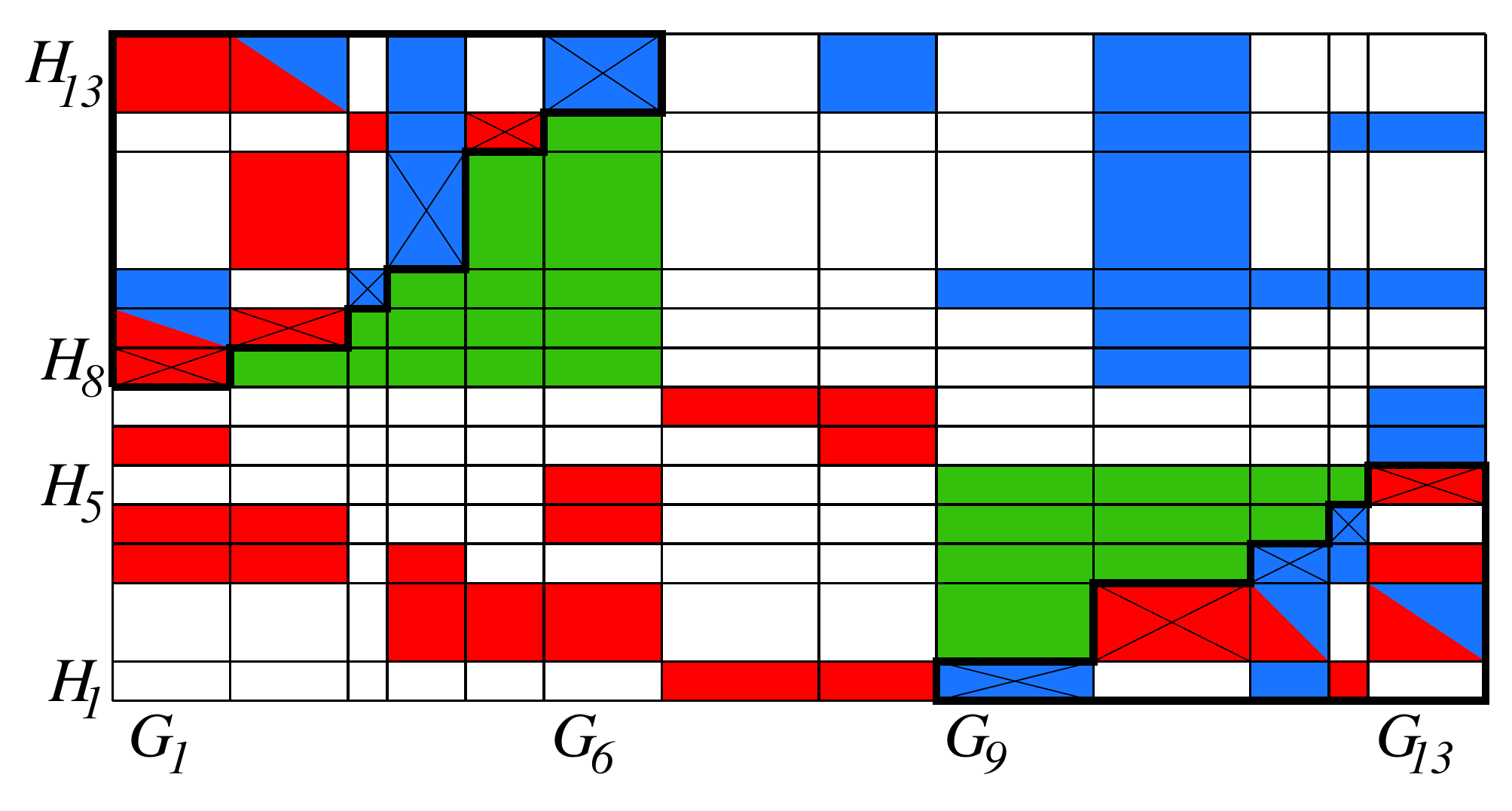}
\caption{ (Color coded)
For the meaning of white, red and blue colors and thick lines, see the caption of Fig.~\ref{fig10}.
Green color  has the same meaning as white color, i.e., it represents ``empty'' cells, i.e., cells $C_{k,j}$ such that $\P(C_{k,j})=0$.
The green cells are the cells that were ``emptied'' in Step 1 of the proof of Lemma \ref{s30.11}.
Every crossed cell belongs either to $\calD_- $ or to $ \calD_+$ and must
satisfy $\P(C_{k,j} )>0$ and either $\P(C_{k,j} \cap A) =0$ or $\P(C_{k,j} \cap A^c) =0$. 
All cells below $\calD_-$ are in $A^c$ or empty. All cells above $\calD_+$ are in $A$ or empty. All cells to the right of $\calD_-$ are in $A$ or empty. All cells to the left of $\calD_+$ are in $A^c$ or empty. 
If a cell is below a cell in $\calD_-$ and to the right of a (different) cell in $\calD_-$ then it is empty. If a cell is above a cell in $\calD_+$ and to the left of a (different) cell in $\calD_+$ then it is empty.
}
\label{fig1}
\end{figure}

The logical scheme of the remaining part of the proof is  represented by the flowchart in Fig.~\ref{Ltree}. 

\begin{figure} \includegraphics[width=0.8\linewidth]{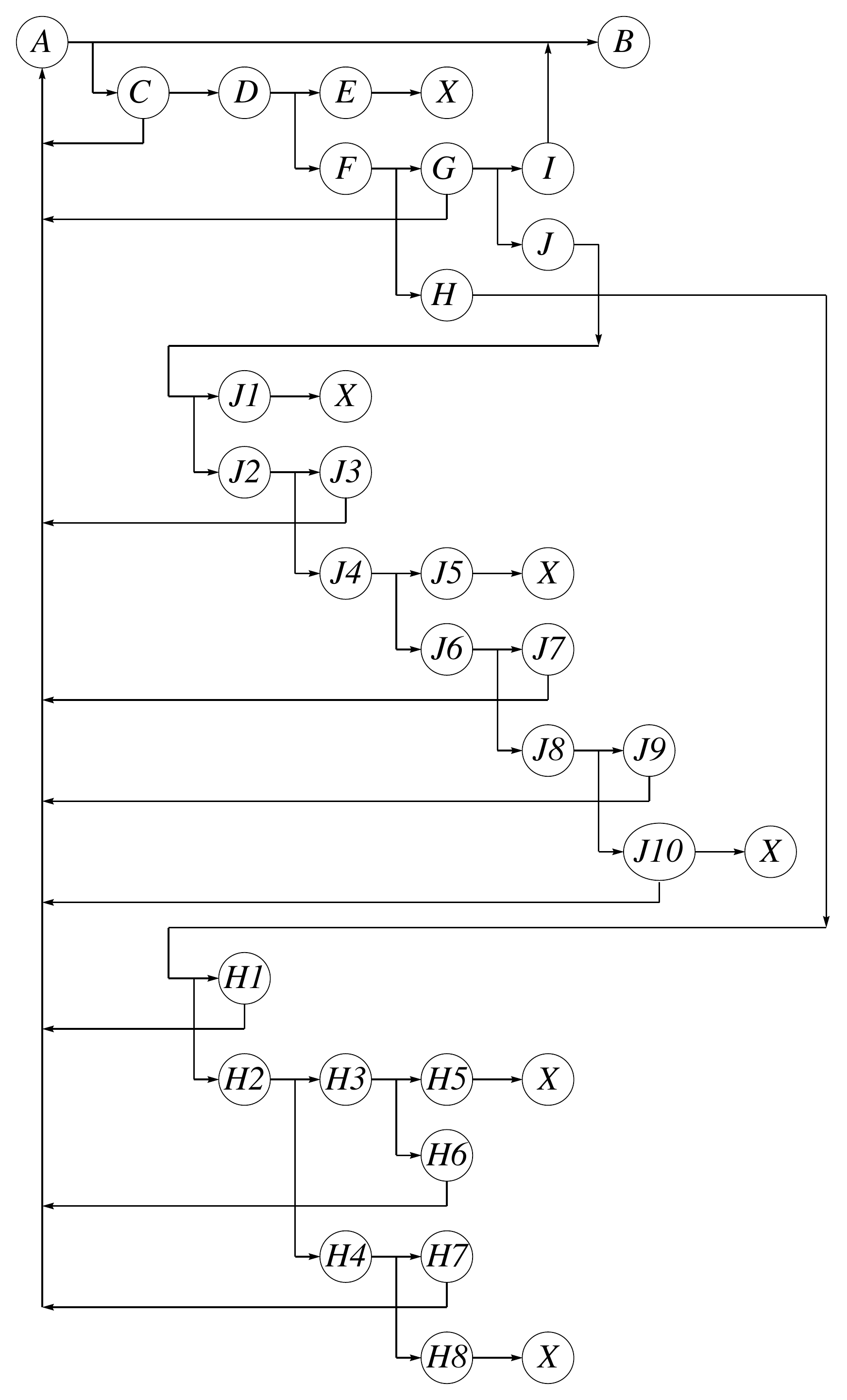}
\caption{
The logical structure of the proof of Lemma \ref{o3.10}.
}
\label{Ltree}
\end{figure}

The leaves of our branching argument will be denoted  \circled{X}. These are places where logical contradictions are reached so another branch of the proof must be considered.

We will now give names to transformations that we will apply in this proof.

Let $\calT_9$ be the transformation defined in Lemma \ref{s30.11}. 

Recall that $\tnw((k_1,j_1),(k_2,j_2))$  denotes
the transformation defined in Lemma \ref{s30.5}.
The transformation $\tnwc((k_1,j_1),(k_2,j_2))$ was defined in an analogous way, by replacing $A$ with $A^c$ in the assumption \eqref{f7.11} and the construction of $\tnw((k_1,j_1),(k_2,j_2))$.

Recall that $\calT_3(k,j)$ is the transformation defined in Step 1 of the proof of Lemma \ref{s29.8}. If $\bS' = \calT_3(k,j)(\bS)$ then at least one of the conditions \eqref{f6.14}-\eqref{f6.15} holds.

The root of the flowchart  in Fig.~\ref{Ltree} is \circled{A}.
Our argument will require that we jump to \circled{A} repeatedly. We will argue in the main body of the proof that every jump to \circled{A} is associated with the decrease of $m(\calG)$ or $m(\calH)$. Therefore, there can be only a finite number of jumps to \circled{A} from later parts of the proof. 

We will apply transformations named above repeatedly but instead of using the notation with primes, such as $C'_{k,j}$, for the resulting objects, we will always write $C_{k,j}$, etc., without primes, to simplify the notation. We hope that this convention will not be confusing in this proof.

Suppose that $\bS$ is given. In view of the initial part of the proof, we can assume that it has all the properties of $\bS''$, i.e., it satisfies \eqref{f3.1}, \eqref{o2.1}, \eqref{f6.8}-\eqref{f3.12}, \eqref{s29.4}, \eqref{f9.2}-\eqref{f9.5}.

\circled{A} Apply $\calT_9$ to $\bS$. 

If $m_-(\calG) \leq 1$ then we jump to \circled{B}. 

\circled{C}
Suppose that $m_-(\calG) > 1$. 
We apply transformations $\tnwc((k,m(\calH)), (m_-(\calG), j))$ repeatedly for all  $k < m_-(\calG)$ and $j< m_+(\calH)$.
The resulting $\bS$ satisfies \eqref{f3.1} because the following case of \eqref{f7.3}-\eqref{f8.3} is satisfied for $k < m_-(\calG)$ and $j< m_+(\calH)$,
\begin{align*}
C_{k,m(\calH)} \cup C_{m_-(\calG),m(\calH)} 
\subset B \quad &\text{  and  } \quad
 C_{k,j} \cup  C_{m_-(\calG),j}
\subset B^c. 
\end{align*}
In later applications of $\tnw$ or $\tnwc$, we will leave it to the reader to check that one of conditions \eqref{f7.2}-\eqref{f8.3} is satisfied.

In view of \eqref{f14.1}-\eqref{f14.2}, the result of these transformations has the  property 
\begin{align}\label{f9.7}
\P\left(A^c \cap \bigcup _{k < m_-(\calG)} C_{k, m(\calH)} \right) &= 0,
\quad \text{  or  }\\
\P\left(A^c \cap \bigcup _{j < m(\calH)} C_{m_-(\calG), j} \right) &= 0.\label{f9.8}
\end{align}

We apply the transformation $\calT_3(m_-(\calG), m(\calH))$. Note that it changes the ``$A$-contents'' of only $C_{m_-(\calG), m(\calH)}$. The resulting $\bS$ satisfies \eqref{f6.14}  or \eqref{f6.15} with $(k,j) = (m_-(\calG), m(\calH))$.

If $\bS$ satisfies \eqref{f6.14} then we jump to \circled{A}.
Then an application of $\calT_9$ will result in the  decrease of $m(\calG)$ or $m(\calH)$, in view of \eqref{f3.12} and \eqref{o4.3}.

\circled{D}
Assume that \eqref{f6.15} holds, i.e., we have either $\P(C_{m_-(\calG),m(\calH)}\cap A) =0$ or \break
$\P(C_{m_-(\calG),m(\calH)}\cap A^c) =0$.

\circled{E}
Assume that \eqref{f9.8} holds and recall  \eqref{f9.2}-\eqref{f9.5}.
These formulas imply that $\P\left( \bigcup _{j < m(\calH)} C_{m_-(\calG), j} \right) = 0$.
Since $\P(C_{m_-(\calG),m(\calH)}) >0$ and either $\P(C_{m_-(\calG),m(\calH)}\cap A) =0$ or $\P(C_{m_-(\calG),m(\calH)}\cap A^c) =0$, we must have $x_{m_-(\calG)} = 0$ or $x_{m_-(\calG)} = 1$. If $x_{m_-(\calG)} =0$ then this contradicts the facts that $m_-(\calG)>1$ and $x_{m_-(\calG)} > x_1 \geq 0$. We cannot have $x_{m_-(\calG)} =1$ because  that would contradict \eqref{f7.4}.
We conclude that \eqref{f9.8} cannot hold.
\circled{X}

\circled{F} Assume that \eqref{f9.7} is true.
The transformations which generated \eqref{f9.7}-\eqref{f9.8} did not change $x_i$'s and $y_i$'s, and they also did not change the fact that $\P(C_{m_-(\calG),i}\cap A) =0$ 
for all $i< m(\calH)$.
We are in the current branch of the proof because we have not jumped to \circled{A} after applying 
 $\calT_3(m_-(\calG), m(\calH))$. Hence, $x_{m_-(\calG)} > x_{m_-(\calG)-1}$. 
If $\P(C_{m_-(\calG),m(\calH)}\cap A) =0$ then $x_{m_-(\calG)} =0$ but this contradicts the facts that $m_-(\calG)>1$ and $x_{m_-(\calG)} > x_1 \geq 0$. Hence, we must have  $\P(C_{m_-(\calG),m(\calH)}\cap A^c) =0$.
 This, \eqref{f9.2}-\eqref{f9.5}  and  \eqref{f9.7} imply that 
\begin{align}\label{f14.4}
\P(A^c \cap H_{m(\calH)})=\P\left(A^c \cap \bigcup _{1\leq k \leq m(\calG)} C_{k, m(\calH)} \right) = 0.
\end{align}

At this point our argument branches as follows.
We will consider the case $m_-(\calG)=2$ in \circled{G} and \circled{I}, the case
$m_-(\calG)=3$ in \circled{G} and \circled{J}, and the case $m_-(\calG)\geq 4$ in \circled{H}.
 
\circled{G}
If $m_-(\calG)=2$ or 3 then we  interchange the roles of $A$ and $A^c$, and $\calG$ and $\calH$, and argue as follows. 
We apply transformations $\tnw((k,m_+(\calH)), (1, j))$ repeatedly for all  $k > m_-(\calG)$ and $j> m_+(\calH)$. Then we apply the transformation $\calT_3(1, m_+(\calH))$.

If $\bS$ satisfies \eqref{f6.14} then we jump to \circled{A}. 
Then an application of $\calT_9$ will result in the  decrease of $m(\calG)$ or $m(\calH)$, in view of \eqref{f3.12} and \eqref{o4.3}.

Otherwise we will reach the conclusion analogous to \eqref{f14.4}, namely that 
\begin{align}\label{f14.5}
\P(A \cap G_{1})= 0.
\end{align}

We will now argue that transformations described between \eqref{f14.4} and \eqref{f14.5} do not affect the validity of \eqref{f14.4}, assuming that there was no jump to \circled{A}. Transformations $\tnw((k,m_+(\calH)), (1, j))$ do not change any $x_i$'s, $y_i$'s, $p_i$'s and $q_i$'s. The transformation $\calT_3(1, m_+(\calH))$ does not affect any cells in $H_{m(\calH)}$ because $m_+(\calH) < m(\calH)$. Hence, \eqref{f14.4} remains true.

\circled{I}
Suppose that $m_-(\calG)=2$ and use \eqref{f14.4}-\eqref{f14.5} to conclude that $\P(C_{1,m(\calH)})=0$.  We now jump to \circled{B}.
 
\circled{J}
Next assume that $m_-(\calG)=3$. This case is rather complicated so it is the only part of the proof whose  steps are illustrated one by one in Figs.~\ref{fig2}-\ref{fig8}. We will now explain how different events are represented in these figures. The three columns represent $G_1$, $G_2$ and $G_3$. The three rows represent $H_{m_+(\calH)} = H_{m(\calH)-2}$, $H_{m(\calH)-1}$ and $H_{m(\calH)}$.
The colors have the same meaning as in Fig.~\ref{fig1}.
Cells containing some white and some other color may be empty or contain either $A$ or $A^c$, depending on the color. The colored areas at the bottom represent the family of cells $C_{k,j}$ (not individual cells), with $1\leq k \leq 3$ and $j < m_+(\calH)$, and the colored areas on the right represent the family of cells $C_{k,j}$ with $k> m_-(\calG)$ and $H_{m_+(\calH)} \leq j \leq H_{m(\calH)}$.

The starting point is illustrated in Fig.~\ref{fig2}. This $\bS$ satisfies 
\eqref{f14.4}-\eqref{f14.5}. We have either
\begin{align}\label{f14.8}
\P(C_{2,m(\calH)-1}\cap A) =0
\end{align}
or $\P(C_{2,m(\calH)-1}\cap A^c) =0$. We will discuss only the  case in \eqref{f14.8}, depicted in Fig.~\ref{fig2}. The other case can be dealt with in an analogous way.

\begin{figure} \includegraphics[width=0.45\linewidth]{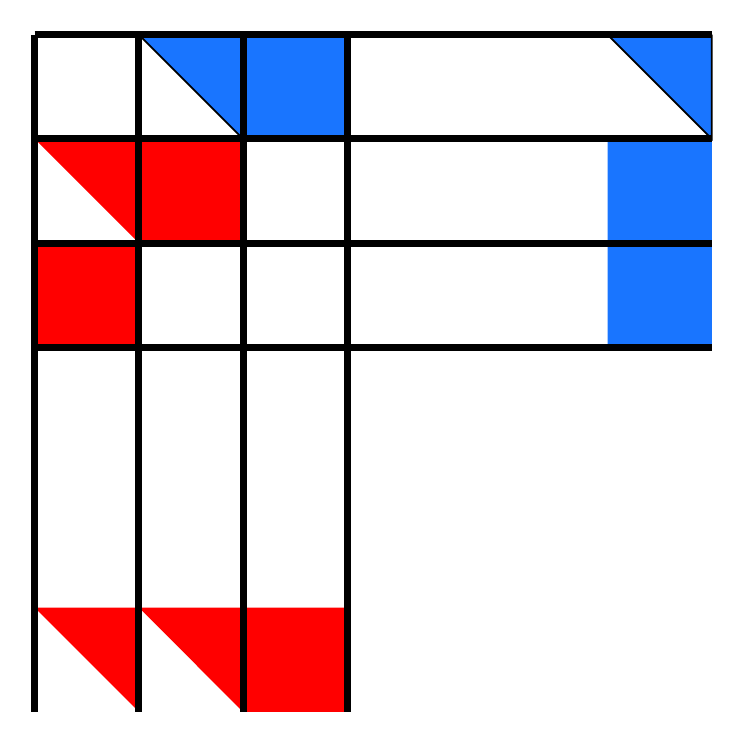}
\caption{(Color coded) See the body of the article for the description.}
\label{fig2}
\end{figure}

We apply transformations $\tnw((2,m(\calH)), (k,m(\calH)-1))$ repeatedly for all  $k>m_-(\calG)$.
The result of these transformations has the  property 
\begin{align}\label{f14.6}
\P\left(A \cap \bigcup _{k>m_-(\calG)} C_{k,m(\calH)-1)} \right) &= 0,
\quad \text{  or  }\\
\P\left(A \cap  C_{2,m(\calH)} \right) &= 0.\label{f14.7}
\end{align}

\circled{J1}
If \eqref{f14.6} holds then $\P(H_{m(\calH)-1} \cap A ) =0$ because of \eqref{f9.2}-\eqref{f9.5}, \eqref{f14.5} and \eqref{f14.8}. But then $y_{m(\calH)-1}=0$. This is impossible because $m(\calH)-1 > m_+(\calH)$. \circled{X}

\circled{J2}
Assume that \eqref{f14.7} is true. 
We apply  $\calT_3(2,m(\calH)-1)$. The resulting $\bS$ satisfies \eqref{f6.14}  or \eqref{f6.15} with $(k,j) = (2,m(\calH)-1)$.

\circled{J3}
If $\bS$ satisfies \eqref{f6.14} then we jump to \circled{A}. 
Then an application of $\calT_9$ will result in the  decrease of $m(\calG)$ or $m(\calH)$, in view of \eqref{f3.12} and \eqref{o4.3}.

\circled{J4}
Assume that \eqref{f6.15} holds, i.e., either $\P(C_{2,m(\calH)-1}\cap A) =0$ or $\P(C_{2,m(\calH)-1}\cap A^c) =0$ (see Figs.~\ref{fig3}-\ref{fig4}).

\circled{J5}
Assume that $\P(C_{2,m(\calH)-1}\cap A) =0$ (see Fig.~\ref{fig3}). This assumption, combined with \eqref{f9.2}-\eqref{f9.5} and \eqref{f14.7} implies that $x_2=\P(G_2 \cap A) = 0$. This is impossible because $x_2> x_1 \geq 0$. \circled{X}

\circled{J6} Hence, we must have  $\P(C_{2,m(\calH)-1}\cap A^c) =0$ (see Fig.~\ref{fig4}).
We apply transformations $\tnwc((1,m_+(\calH)+1), (2,j))$ repeatedly for all  $j<m_+(\calH)$.
The result of these transformations has the  property 
\begin{align}
\P\left(A^c \cap  C_{1,m_+(\calH)+1} \right) &= 0\label{f14.11}
\quad \text{  or  }\\
\label{f14.10}
\P\left(A^c \cap \bigcup _{j<m_+(\calH)} C_{2,j} \right) &= 0.
\end{align}
We apply the transformation $\calT_3(2,m(\calH)-1)$. The resulting $\bS$ satisfies \eqref{f6.14}  or \eqref{f6.15} with $(k,j) =(2,m(\calH)-1)$.

\circled{J7}
If $\bS$ satisfies \eqref{f6.14} then we jump to \circled{A}. 
Then an application of $\calT_9$ will result in the  decrease of $m(\calG)$ or $m(\calH)$, in view of \eqref{f3.12} and \eqref{o4.3}.

\begin{figure}
\centering
\begin{minipage}{.45\textwidth}
  \centering
  \includegraphics[width=1\linewidth]{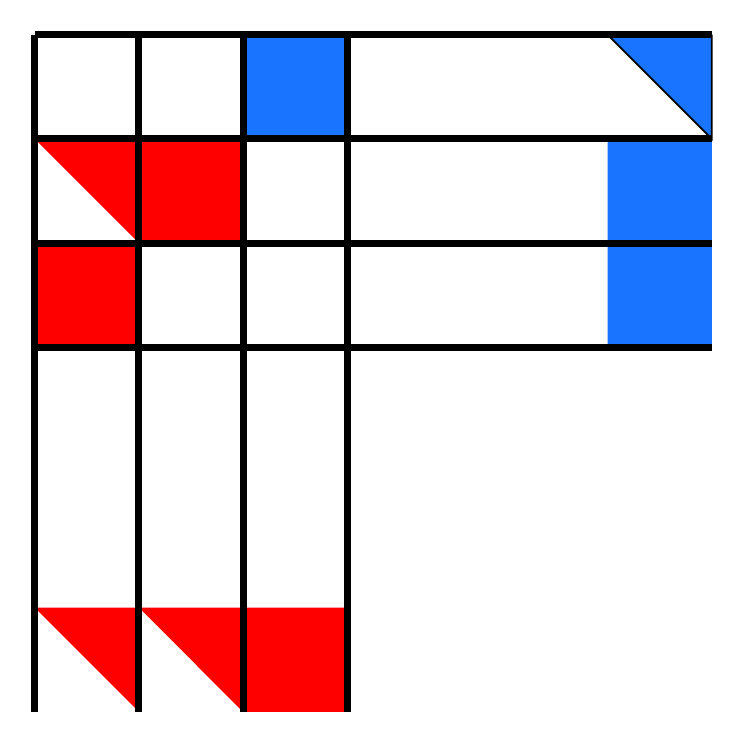}
  \captionof{figure}{}
  \label{fig3}
\end{minipage}%
\begin{minipage}{.45\textwidth}
  \centering
  \includegraphics[width=1\linewidth]{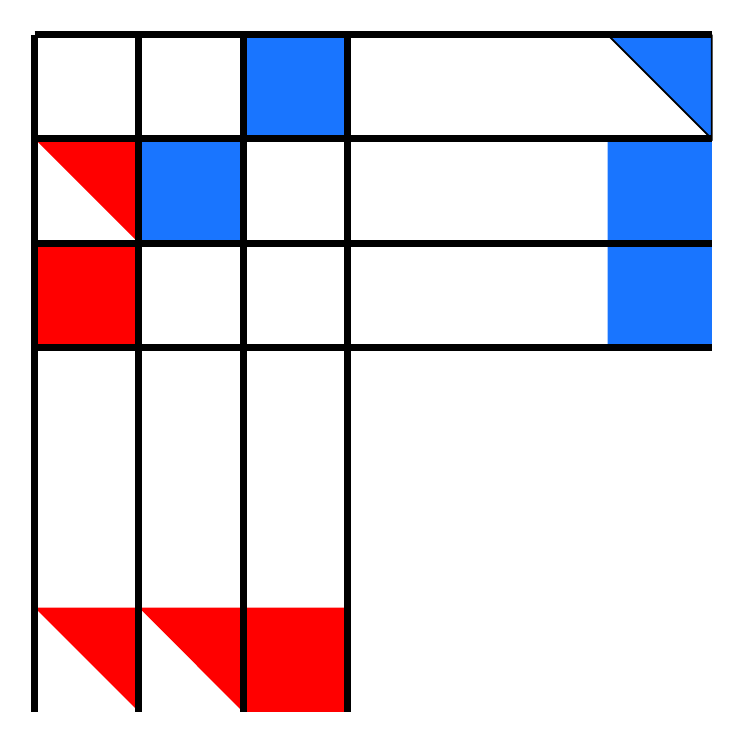}
  \captionof{figure}{}
  \label{fig4}
\end{minipage}
\captionsetup{labelformat=empty}
\caption{(Color coded figures) See the body of the article for the description.}
\end{figure}

\circled{J8}
Assume that \eqref{f6.15} holds (see Figs.~\ref{fig5}-\ref{fig6}), that is,
\begin{align}\label{f15.1}
\P(C_{2,m(\calH)-1}\cap A) =0\quad \text{  or  } \quad\P(C_{2,m(\calH)-1}\cap A^c) =0.
\end{align}

\circled{J9}
Suppose \eqref{f14.11} holds.

 If
$\P(C_{2,m(\calH)-1}\cap A^c) =0$ then $\P(H_{m(\calH)-1} \cap A^c ) =0$ because of \eqref{f9.2}-\eqref{f9.5} (see Fig.~\ref{fig5}). Then $y_{m(\calH)-1}=1= y_{m(\calH)}$ and we jump to \circled{A}. 
An application of $\calT_9$ will result in the  decrease of $m(\calG)$ or $m(\calH)$, in view of \eqref{f3.12} and \eqref{o4.3}.

If \eqref{f14.11} holds and $\P(C_{2,m(\calH)-1}\cap A) =0$ then $\P(G_2 \cap A ) =0$ because of \eqref{f9.2}-\eqref{f9.5} and \eqref{f14.7}  (see Fig.~\ref{fig6}). Then $x_2=0=x_1$  and we jump to \circled{A}.  
An application of $\calT_9$ will result in the  decrease of $m(\calG)$ or $m(\calH)$, in view of \eqref{f3.12} and \eqref{o4.3}.

\begin{figure}
\centering
\begin{minipage}{.45\textwidth}
  \centering
  \includegraphics[width=1\linewidth]{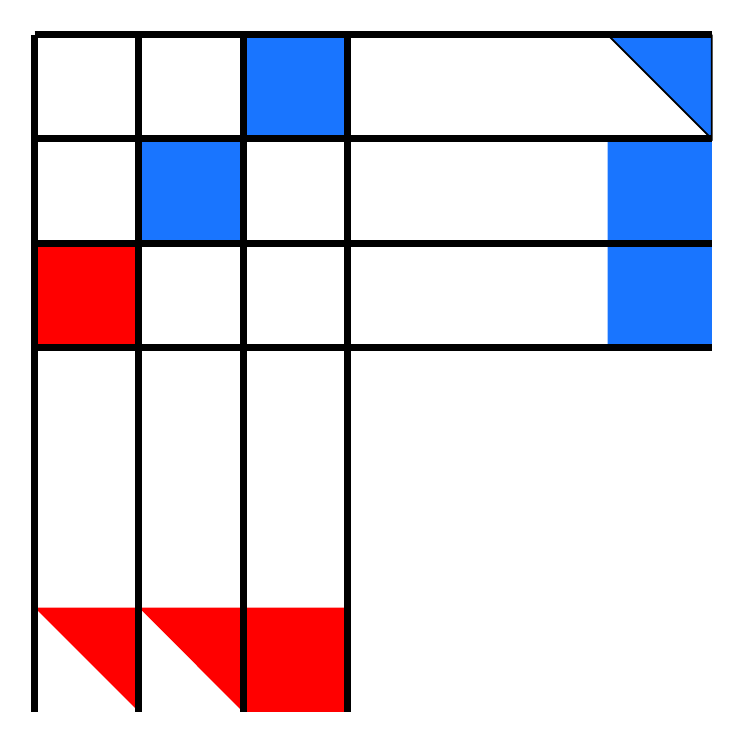}
  \captionof{figure}{}
  \label{fig5}
\end{minipage}%
\begin{minipage}{.45\textwidth}
  \centering
  \includegraphics[width=1\linewidth]{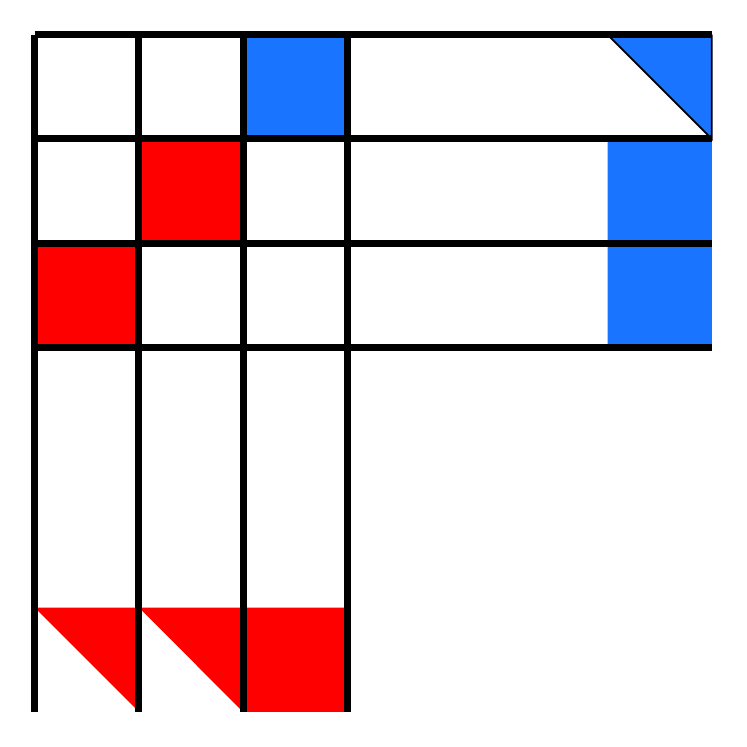}
  \captionof{figure}{}
  \label{fig6}
\end{minipage}
\captionsetup{labelformat=empty}
\caption{(Color coded figures) See the body of the article for the description.}
\end{figure}

\circled{J10}
Next suppose that \eqref{f14.10} holds; see Figs.~\ref{fig7}-\ref{fig8}.
In view of \eqref{f14.1}-\eqref{f14.2}, \eqref{f14.7} and \eqref{f15.1}, either $x_2 = 1$ or $x_2 =0$. The first case is impossible because $2< m_-(\calG)$. \circled{X}

 In the second case we have 
$x_2=0=x_1$  and we jump to \circled{A}. 
Then an application of $\calT_9$ will result in the  decrease of $m(\calG)$ or $m(\calH)$, in view of \eqref{f3.12} and \eqref{o4.3}.

\begin{figure}
\centering
\begin{minipage}{.45\textwidth}
  \centering
  \includegraphics[width=1\linewidth]{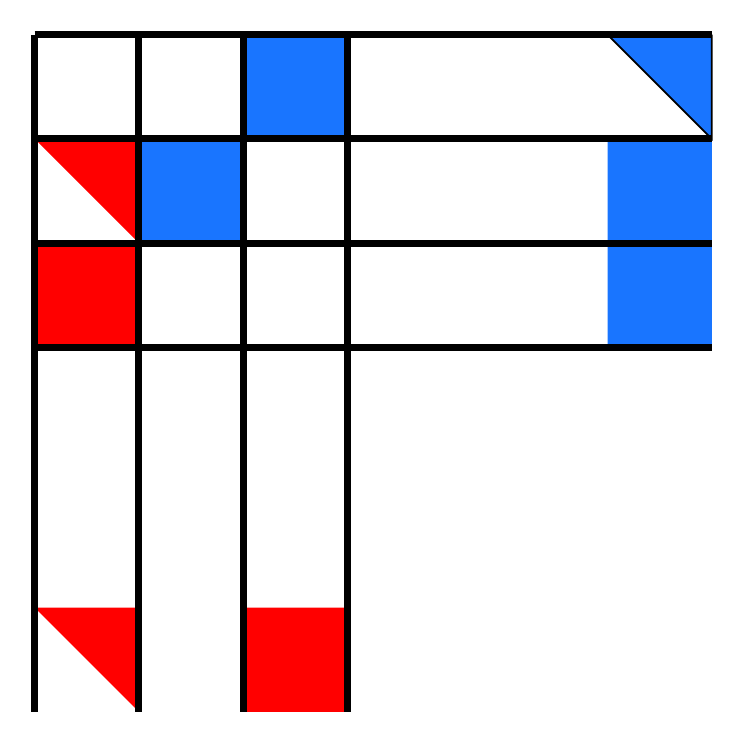}
  \captionof{figure}{}
  \label{fig7}
\end{minipage}%
\begin{minipage}{.45\textwidth}
  \centering
  \includegraphics[width=1\linewidth]{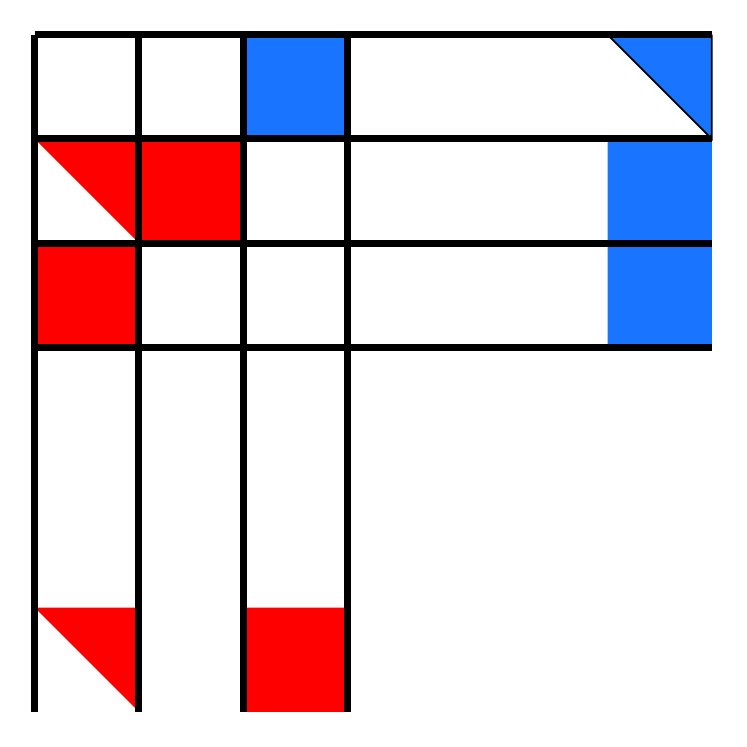}
  \captionof{figure}{}
  \label{fig8}
\end{minipage}
\captionsetup{labelformat=empty}
\caption{(Color coded figures) See the body of the article for the description.}
\end{figure}

\circled{H}
We now assume that $m_-(\calG)\geq 4$. Recall \eqref{f14.4}.
We will analyze $H_{m(\calH)-1}\cap B$. 
We apply transformations $\tnwc((k,m(\calH)-1), (m_-(\calG)-1, j))$ repeatedly for all $k < m_-(\calG)-1$ and $j< m_+(\calH)$.
The resulting $\bS$ has the  property 
\begin{align}
\P\left(A^c \cap \bigcup _{j < m_+(\calH)} C_{m_-(\calG)-1, j} \right) &= 0 \label{f9.10}
\quad \text{  or  }\\
\label{f9.9}
\P\left(A^c \cap \bigcup _{k < m_-(\calG)-1} C_{k, m(\calH)-1} \right) &= 0.
\end{align}

We apply the transformation $\calT_3(m_-(\calG)-1, m(\calH)-1)$. The resulting $\bS$ satisfies \eqref{f6.14} or \eqref{f6.15} with $(k,j) = (m_-(\calG)-1, m(\calH)-1)$.

\circled{H1}
If $\bS$ satisfies \eqref{f6.14} then we jump to \circled{A}. 
Then an application of $\calT_9$ will result in the  decrease of $m(\calG)$ or $m(\calH)$, in view of \eqref{f3.12} and \eqref{o4.3}.

\circled{H2}
Assume that \eqref{f6.15} holds, i.e., either $\P(C_{m_-(\calG)-1,m(\calH)-1}\cap A) =0$ or \break $\P(C_{m_-(\calG)-1,m(\calH)-1}\cap A^c) =0$. 

\circled{H3}
Suppose that 
\begin{align}\label{f16.1}
\P(C_{m_-(\calG)-1,m(\calH)-1}\cap A^c) =0.
\end{align}

\circled{H5}
Assume that \eqref{f9.10} holds. 
The transformations which generated \eqref{f9.9}-\eqref{f9.10} did not affect the conditions
\eqref{f9.2}-\eqref{f9.5}.
This, \eqref{f14.1}-\eqref{f14.2} and \eqref{f9.10} imply that $\P\left( \bigcup _{j < m(\calH)-1} C_{m_-(\calG)-1, j} \right) = 0$.
We have  $\P(C_{m_-(\calG)-1,m(\calH)}\cap A^c) =0$ because
of \eqref{f14.4}. These observations and \eqref{f16.1} imply that
 $x_{m_-(\calG)-1} = 1$. But this contradicts the definition of $m_-(\calG)$. Hence, \eqref{f9.10} cannot be true. \circled{X}
 
\circled{H6}
Next suppose that \eqref{f9.9} is true and recall 
\eqref{f14.1}-\eqref{f9.5}. We conclude that
 $\P(A^c \cap H_{m(\calH)-1})= 0$. Hence, $y_{m(\calH)-1} =1$. We have shown earlier that $y_{m(\calH)}=1$ so $y_{m(\calH)-1}=y_{m(\calH)}$. We jump to  \circled{A}. 
Then an application of $\calT_9$ will result in the  decrease of $m(\calG)$ or $m(\calH)$, in view of \eqref{f3.12} and \eqref{o4.3}.
 
\circled{H4}
Suppose that $\P(C_{m_-(\calG)-1,m(\calH)-1}\cap A) =0$. Then, in view of \eqref{f14.4},
\begin{align}\label{f11.1}
\P(C_{m_-(\calG),m(\calH)}\cap A^c) =0 \quad \text{  and  }\quad
 \P(C_{m_-(\calG)-1,m(\calH)-1}\cap A) =0.
\end{align}
Recall that we are assuming here that $m_-(\calG) \geq 4$.
The argument given between \circled{H} and  \circled{H6} can be applied  with the roles of $\calG$ and $\calH$, and those of $A$ and $A^c$, interchanged. Just like in the case of the original argument, some branches will end with \circled{X} and some will lead to \circled{A}. The only branch that will not end this way will generate the following analogue of \eqref{f11.1},
\begin{align}\label{f11.2}
\P(C_{1,m_+(\calH)}\cap A) =0 \quad \text{  and  }\quad
 \P(C_{2,m(\calH)+1}\cap A^c) =0.
\end{align}
The argument which lead to \eqref{f11.1} did not affect the cells in \eqref{f11.2} so, by analogy, the argument that can be used to prove \eqref{f11.2} does not affect the cells in \eqref{f11.1}. Hence,  both \eqref{f11.1} and \eqref{f11.2} hold.

For all $(k,j) \in \calD_-$ we have $\P(C_{k,j}) >0$ and either $\P(C_{k,j}\cap A^c) =0$ or $\P(C_{k,j}\cap A) =0$. It follows from \eqref{f11.1} and \eqref{f11.2} that there exist $(k,j) \in \calD_-$ with $\P(C_{k,j}\cap A^c) =0$ and $\P(C_{k+1,j+1}\cap A) =0$. Fix $(k,j)$ with these properties. 

We apply transformations $\tnwc((k+1,j+1), (k, j_1))$ repeatedly, for all $((k+1,j+1), (k, j_1))$ such that  $j_1\ne  j, j+1$. 
The resulting $\bS$ has the  property 
\begin{align}\label{f11.3}
\P\left(A^c \cap  C_{k+1, j+1} \right) &= 0,
\quad \text{  or  }\\
\P\left(A^c \cap \bigcup _{j_1 \ne j,j+1} C_{k, j_1} \right) &= 0.\label{f11.4}
\end{align}

 \circled{H7}
If \eqref{f11.3} holds then $\P(C_{k+1,j+1}) =0$ because we have assumed that $\P(C_{k+1,j+1}\cap A) =0$. 
We now apply the transformation defined in Lemma \ref{s29.3} (or one of its variants described at the beginning of the proof of Lemma \ref{o3.7}). This transformation decreases $m(\calG)$ or $m(\calH)$.
Then we jump to \circled{A}. 

 \circled{H8}
If \eqref{f11.4} holds then we combine it with  the assumption that $\P(C_{k,j}\cap A^c) =0$ to obtain
\begin{align}\label{f11.5}
\P(G_k \cap A^c) = \P(C_{k,j+1} \cap A^c).
\end{align}

Reversing the roles of $\calG$ and $\calH$, $(k,j)$ and $(k+1,j+1)$, and those of $A$ and $A^c$, we obtain the following formula analogous to \eqref{f11.5},
\begin{align}\label{f11.6}
\P(H_{j+1} \cap A) = \P(C_{k,j+1} \cap A).
\end{align}

It follows from \eqref{f11.5}, \eqref{f11.6}, $k\leq m_-(\calG)$ and $j+1\geq m_+(\calH)$ that
\begin{align*}
\P(A^c\mid C_{k,j+1})
&=\frac{\P(A^c \cap C_{k,j+1})}{\P(A^c \cap C_{k,j+1}) + \P(A \cap C_{k,j+1})}\\
&\geq
\frac{\P(A^c \cap C_{k,j+1})}{\P(A^c \cap C_{k,j+1}) + \P(A \cap C_{k,j+1})
+ \P(G_k \setminus C_{k,j+1})}\\
& = 
\frac{\P(A^c \cap G_k)}{\P(A^c \cap C_{k,j+1}) + \P(A \cap C_{k,j+1})
+ \P(G_k \setminus C_{k,j+1})}\\
&= 
\frac{\P(A^c \cap G_k)}{\P(G_k )}= 1 - x_k \geq 1- \delta,
\end{align*}
and
\begin{align*}
\P(A\mid C_{k,j+1})
&=\frac{\P(A \cap C_{k,j+1})}{\P(A^c \cap C_{k,j+1}) + \P(A \cap C_{k,j+1})}\\
&\geq
\frac{\P(A \cap C_{k,j+1})}{\P(A^c \cap C_{k,j+1}) + \P(A \cap C_{k,j+1})
+ \P(H_{j+1} \setminus C_{k,j+1})}\\
& = 
\frac{\P(A \cap H_{j+1})}{\P(A^c \cap C_{k,j+1}) + \P(A \cap C_{k,j+1})
+ \P(H_{j+1} \setminus C_{k,j+1})}\\
&= 
\frac{\P(A \cap H_{j+1})}{\P(H_{j+1} )}= y_{j+1}\geq 1-\delta.
\end{align*}
The two inequalities contradict each other since $\delta < 1/2$.
 \circled{X}

\circled{B}
We jump here from two points in the proof. We can jump here from \circled{A} in the case when $m_-(\calG) \leq 1$. We can also jump here from \circled{I}; in this case we have  $m_-(\calG) =2$ and  $\P(C_{1,m(\calH)})=0$.
All branches of the proof corresponding to $m_-(\calG) >2$ (sub-branches of \circled{J} and \circled{H}) ended with  \circled{X} or returned to \circled{A}. 

We have pointed out within the proof that every jump to \circled{A} was
associated with the decrease of $m(\calG)$ or $m(\calH)$, in most cases because of the application of $\calT_9$.
Hence, after a finite number of jumps to \circled{A}, $\bS$ must have been transformed so that $m_-(\calG) \leq 2$.

We can now exchange the roles of $\calG$ and $\calH$ and apply the same argument to cells $C_{k,j}$ with $k\geq m_+(\calG)$ and $j \leq m_-(\calH)$.
In this way, we will eliminate the case $m_-(\calH) >2$ and will be left with the case $m_-(\calH) \leq 2$. Moreover, if $m_-(\calH) =2$, we will have  $\P(C_{m(\calG),1})=0$.
Some transformations in the new part of the proof are of the type $\tnw
$ or $\tnwc$; these transformations do not change any $x_k$'s, $y_j$'s, $p_k$'s and $q_j$'s. Transformations of the type $\calT_3(\,\cdot\,,\,\cdot\,)$ in the new part of the proof do not affect cells  $C_{k,j}$ with $k\leq m_-(\calG)$ and $j \geq m_+(\calH)$.
Hence, we have constructed $\bS$ satisfying 
$m_-(\calG) \leq 1$, or  $m_-(\calG) =2$ and   $\P(C_{1,m(\calH)})=0$, and also satisfying $m_-(\calH) \leq 1$, or  $m_-(\calH) =2$ and   $\P(C_{m(\calG),1})=0$.

Part (i) of the lemma is satisfied because after the initial application of Lemma \ref{o1.2}, at the very beginning of the proof, all other transformations satisfied \eqref{f3.1}.
\end{proof}

\begin{proof}[Proof of Theorem \ref{f9.1}]
Recall that  $\delta \in (0, 1/2)$ is fixed.

For the proof of the lower bound in \eqref{s7.2}, see Proposition \ref{s28.4}.

We turn our attention to the upper bound.
In view of Lemma \ref{s28.5}, we can assume that $\calG$ and $\calH$ are finitely generated.

Consider any $\bS$. If $\P(B) =0$ then we are done. Assume that $\P(B)>0$ and consider any $\eps>0$. Let $\bS'$ be the result of the transformation defined in Lemma \ref{o3.10}.
Then $\P(B') \geq \P(B) -\eps$. 

Parts (ii) and (iii) of Lemma \ref{o3.10} imply that
for all $1\leq k \leq m'(\calG')$ there is at most one $j$ such that $C'_{k,j}\subset B'$ and $\P(C'_{k,j})>0$, and, similarly, for all $1 \leq j \leq m'(\calH')$ there is at most one $k$ such that $C'_{k,j}\subset B'$ and $\P(C'_{k,j})>0$.
Let $\calB'$ be the family of all $1\leq k \leq m'(\calG')$ such that there is a (unique) $j(k)$ such that $C'_{k,j(k)}\subset B'$. Note that $j(k_1) \ne j(k_2)$ if $k_1 \ne k_2$.
We  apply Lemma \ref{f11.7}
as follows.
\begin{align*}
\P(B)-\eps &\leq \P(B') = \sum_{k\in\calB'}\P(C'_{k,j(k)}) 
= \sum_{k\in\calB'} \P(G'_{k}\cap H'_{j(k)})
\leq \sum_{k\in\calB'} \frac \delta{1+\delta} (\P(G'_{k}) + \P(H'_{j(k)}))\\
&=\frac \delta{1+\delta}\left (
 \sum_{k\in\calB'}  \P(G'_{k})+ \sum_{k\in\calB'}  \P(H'_{j(k)})\right)
\leq \frac {2\delta}{1+\delta} .
\end{align*}
Since $\eps>0$ is arbitrarily small, $\P(B) \leq 2\delta/(1+\delta)$.
\end{proof}

\section{Acknowledgments}
We are grateful to Jim Pitman for very useful advice.

\bibliographystyle{plain}
\bibliography{pred}

\end{document}